\font\tenmath=msbm10 \font\sevenmath=msbm7 \font\fivemath=msbm5
\def \\ { \cr }
\newcommand{\vvert}{|\!|\!|}
\newcommand{\RR}{{\mathbb R}}
\newcommand{\CC}{{\mathbb C}}
\newcommand{\NN}{{\mathbb N}}
\newcommand{\ZZ}{\mathbb Z}
\newcommand{\QQ}{{\mathbb Q}}
\newcommand{\EE}{{\mathbb E}}
\def\B{{\mathcal B}}
\def\P{{\mathcal P}}
\def\T{{\mathcal T}}
\numberwithin{equation}{section}
\newtheorem{theo}{Theorem}
\newtheorem{prop}[theo]{Proposition}
\newtheorem{coro}[theo]{Corollary}
\newtheorem{lemma}[theo]{Lemma}
\theoremstyle{remark}
\newtheorem{remark}[theo]{Remark}
\begin{document}

\parindent = 0cm

\title
{Eigenvalues of finite rank
Bratteli-Vershik dynamical systems }

\author{Xavier Bressaud}
\address{Institut de Math\'ematiques de Luminy, 163 avenue de Luminy, Case
907, 13288 Marseille Cedex 9, France.}
\email{bressaud@iml.univ-mrs.fr}

\author{Fabien Durand}
\address{Laboratoire Ami\'enois
de Math\'ematiques Fondamentales et Appliqu\'ees, CNRS-UMR 6140,
Universit\'{e} de Picardie Jules Verne, 33 rue Saint Leu, 80000
Amiens, France.} \email{fabien.durand@u-picardie.fr}

\author{Alejandro Maass}
\address{Departamento de Ingenier\'{\i}a
Matem\'atica and Centro de Modelamiento
Ma\-te\-m\'a\-ti\-co, CNRS-UMI 2807, Universidad de Chile, Avenida Blanco Encalada 2120, Santiago, Chile.}
 \email{amaass@dim.uchile.cl}

\subjclass{Primary: 54H20; Secondary: 37B20} \keywords{minimal
Cantor systems, finite rank Bratteli-Vershik dynamical systems,
eigenvalues}

\begin{abstract}
In this article we study conditions to be a continuous or a
measurable eigenvalue of finite rank minimal Cantor systems,
that is, systems given by an ordered Bratteli diagram with a bounded number
of vertices per level. 
We prove that continuous eigenvalues always
come from the stable subspace associated to the incidence matrices  
of the Bratteli diagram and we study rationally
independent generators of the additive group of continuous
eigenvalues. Given an ergodic probability measure, we provide a
general necessary condition to be a measurable eigenvalue. Then we
consider two families of examples. A first one to illustrate that
measurable eigenvalues do not need to come from the stable
space. Finally we study Toeplitz type Cantor
minimal systems of finite rank. We recover classical results in
the continuous case and we prove measurable eigenvalues are always
rational but not necessarily continuous.
\end{abstract}

\date{February 8, 2006}
\maketitle \markboth{Xavier Bressaud, Fabien Durand, Alejandro
Maass}{Eigenvalues of finite rank
systems}

\section{Introduction}

The study of eigenvalues of dynamical systems has been extensively considered in ergodic theory to understand and build the Kronecker  factor and also to study the weak mixing property. 
In topological dynamics one also consider continuous eigenvalues, that is, eigenvalues associated to continuous eigenfunctions, to study topological weak mixing (at least in the minimal case). 
Since continuous eigenvalues are also eigenvalues, 
a recurrent question is to know whether they coincide. 
In general the answer is negative, since there are minimal topologically weakly mixing systems that are not weakly mixing for some invariant measure.  A positive answer to that question has been given for the class of primitive substitution systems in \cite{Ho}. 
The same question has been considered for linearly recurrent Cantor minimal systems, which contains substitution systems, in \cite{CDHM} and \cite{BDM} concluding that in general not all eigenvalues are continuous. 
Nevertheless, explicit necessary and sufficient conditions  are given to check whether a complex number is an eigenvalue, continuous or not, that allow to recover the result in \cite{Ho}.  
Those conditions only depend on the incidence matrices associated to a Bratteli-Vershik representation of the linearly recurrent minimal system and not on the partial order of the diagram. The independence of the order seems to be characteristic of linearly recurrent systems.

\medskip

In this article we consider the same question for Cantor minimal systems that admit a Bratteli-Vershik representation with the same number of vertices per level. We call them (topologically) finite rank Cantor minimal systems. The motivations are different. First  this class  contains linearly recurrent systems but it is much larger and natural (some systems in this class  appear as the symbolic representation of well studied classes of dynamical systems like interval exchange transformations \cite{GJ}). Second, the knowledge about this class of systems is very small, one of its main (and recent) properties is that they are either expansive or equicontinuous  \cite{DM}. Thus, up to odometers that are well known,  this is a huge class of symbolic minimal systems. 

\medskip

In Section 3 we study continuous eigenvalues. 
The main result states that continuous eigenvalues of Cantor minimal systems always  come from the stable space associated to the sequence of matrices of the Bratteli-Vershik representation of the system and we provide a general necessary condition to be a continuous eigenvalue.  
In Section 4 these results are used to get a bound for  the maximal number of rationally independent continuous eigenvalues of a finite rank system. 
In section 5 we give  a necessary condition to be an eigenvalue  of a finite rank minimal systems indowed with an invariant probability measure.  
Next two sections are devoted to examples. In section 6 we construct a uniquely ergodic non weakly mixing finite rank Cantor minimal system to illustrate that eigenvalues do not always come from the stable space associated to the sequence of matrices given by the Bratteli-Vershik representation of the system. In the last section we study Toeplitz type Cantor minimal systems of finite rank. The main property we deduce is that eigenvalues are always rational but not always continuous.

\section{Basic definitions}

\subsection{Dynamical systems and eigenvalues}
A \emph{topological dynamical system}, or just dynamical system,
is a compact Hausdorff space $X$ together with a homeomorphism $
T:X\rightarrow X$. One uses the notation \( \left( X,T\right) \).
If \( X \) is a Cantor set one says that the system is Cantor.
That is, \( X \) has a countable basis of closed and open sets and
it has no isolated points. A dynamical system is \emph{minimal} if
all orbits are dense in \( X, \) or equivalently the only non
trivial closed invariant set is \( X. \)

A complex number $\lambda$ is a {\it continuous eigenvalue} of
$(X,T)$ if there exists a continuous function $f : X\to \CC$,
$f\not = 0$, such that $f\circ T = \lambda f$; $f$ is called a
{\it continuous eigenfunction} (associated to $\lambda$). Let
$\mu$ be a $T$-invariant probability measure, i.e., $T\mu = \mu$,
defined on the Borel $\sigma$-algebra $\B(X)$ of $X$. A complex
number $\lambda$ is an {\it eigenvalue} of the dynamical system
$(X,T)$ with respect to $\mu$ if there exists $f\in L^2
(X,\B(X),\mu)$, $f\not = 0$, such that $f\circ T = \lambda f$; $f$
is called an {\it eigenfunction} (associated to $\lambda$). If the
system is ergodic, then every eigenvalue is of modulus 1, and
every eigenfunction has a constant modulus $\mu$-almost surely. Of
course continuous eigenvalues are eigenvalues.

\subsection{Bratteli-Vershik representations}
Let $(X,T)$ be a Cantor minimal system. It can be represented by
an \emph{ordered Bratteli-Vershik diagram}. For details on this
theory see \cite{HPS}. 
We give a brief outline of such
constructions emphasizing the notations used in the paper.

\subsubsection{Bratteli-Vershik diagrams}
A Bratteli-Vershik diagram is an infinite graph \( \left(
V,E\right) \) which consists of a vertex set \( V \) and an edge
set \( E \), both of which are divided into levels \( V=V_{0}\cup
V_{1}\cup \ldots  \), \( E=E_{1}\cup E_{2}\cup \ldots \) and all
levels are pairwise disjoint. The set \( V_{0} \) is a singleton
\( \{v_{0}\} \), and for \( k\geq 1 \), \( E_{k} \) is the set of
edges joining vertices in \( V_{k-1} \) to vertices in \( V_{k}
\). It is also required that every vertex in \( V_{k} \) is the
``end-point'' of some edge in \( E_{k} \) for \( k\geq 1 \), and
an ``initial-point'' of some edge in \( E_{k+1} \) for \( k\geq 0
\). One puts $V_k=\{1,\ldots,C(k)\}$. The \emph{level} \( k \) is
the subgraph consisting of the vertices in \( V_{k}\cup V_{k+1} \)
and the edges \( E_{k+1} \) between these vertices. Level \( 0 \)
is called the \emph{hat} of the Bratteli-Vershik diagram and it is
uniquely determined by an integer vector
$H(1)=\left(h_1(1),\ldots, h_{C(1)}(1) \right)^T \in \NN ^{C(1)}$,
where each component represents the number of edges joining \(
v_{0} \) and a vertex of \( V_{1} \).
We will assume
$H(1)=(1,\ldots,1)^T$; it is not restrictive for our purpose.

We describe the edge set \( E_{k} \) using a \( V_{k}\times
V_{k-1} \) incidence matrix $M(k)$ for which its \( (i,j)
\)--entry is the number of edges in \( E_{k} \) joining vertex \(
j\in V_{k-1} \) with vertex \( i\in V_{k} \). We also assume
$M(k)>0$ (this is not a restriction for our purpose). 
For $1\leq k \leq l$ one defines 

$$
P(k)=M(k)\cdots M(2)
\hbox{ and } 
P(l,k)=M(l) \cdots M(k+1)
$$ 

with $P(1)=I$ and $P(k,k)=I$ ; where $I$ is the identity map.
Also, put $H(k)=P(k)H(1)=(h_1(k),\ldots,h_{C(k)}(k))^T$.

\subsubsection{Ordered Bratteli-Vershik diagrams}
An \emph{ordered} Bratteli-Vershik diagram is a triple \( B=\left( V,E,\preceq
\right) \) where \( \left( V,E\right)  \) is a Bratteli-Vershik diagram and \( \preceq  \)
 a partial ordering on \( E \) such that : Edges
\( e \) and \( e' \) are comparable if and only if they have the
same end-point.

Let \( k\leq l \) in \( \NN \setminus \left\{ 0\right\}  \) and
let \( E_{k,l} \) be the set of all paths in the graph joining
vertices of \( V_{k-1} \) with vertices of \( V_{l} \). The
partial ordering of \( E \) induces another in \( E_{k,l} \) given
by \( \left( e_{k},\ldots ,e_{l}\right) \preceq \left(
f_{k},\ldots ,f_{l}\right)  \) if and only if there is \( k\leq
i\leq l \) such that \( e_{j}=f_{j} \) for \( i<j\leq l \) and \(
e_{i}\preceq f_{i} \).

Given a strictly increasing sequence of integers \( \left(
m_{n}\right) _{n\geq 0} \) with \( m_{0}=0 \) one defines the
\emph{contraction} of \( B=\left( V,E,\preceq \right)  \) (with
respect to \( \left( m_{n} \right) _{n\geq 0} \)) as $$ \left(
\left( V_{m_{n}}\right) _{n\geq 0},\left(
E_{m_{n}+1,m_{n+1}}\right) _{n\geq 0},\preceq \right) , $$ where
\( \preceq  \) is the order induced in each set of edges \(
E_{m_{n}+1,m_{n+1}} \).
The converse operation is called {\it miscroscoping} (see \cite{HPS} for more details).

Given an ordered Bratteli-Vershik diagram \( B=\left( V,E,\preceq
\right) \) one defines \( X_{B} \) as the set of infinite paths \(
\left( e_{1},e_{2},\cdots \right)  \) starting in \( v_{0} \) such
that for all \( i\geq 1 \) the end-point of \( e_{i}\in E_{i} \)
is the initial-point of \( e_{i+1}\in E_{i+1} \). We topologize \(
X_{B} \) by postulating a basis of open sets, namely the family of
\emph{cylinder sets}
$$
\left[ e_{1},e_{2},\ldots ,e_{k}\right] =\left\{ \left. \left(
x_{1},x_{2},\ldots \right) \in X_{B}\textrm{ }\right| \textrm{
}x_{i}=e_{i},\textrm{ for }1\leq i\leq k\textrm{ }\right\} .$$
 Each \( \left[ e_{1},e_{2},\ldots ,e_{k}\right]  \) is also closed, as is
easily seen, and so we observe that \( X_{B} \) is a compact,
totally disconnected metrizable space.

When there is a unique \( x=\left( x_{1},x_{2},\ldots \right) \in
X_{B} \) such that \( x_{i} \) is maximal for any \( i\geq 1 \)
and a unique \( y=\left( y_{1},y_{2},\ldots \right) \in X_{B} \)
such that \( y_{i} \) is minimal for any \( i\geq 1 \), one says
that \( B=\left( V,E,\preceq \right)  \) is a \emph{properly
ordered} Bratteli diagram. Call these particular points \(
x_{\mathrm{max}} \) and \( x_{\mathrm{min}} \) respectively. In
this case one defines a dynamic \( V_{B} \) over \( X_{B} \)
called the \emph{Vershik map}. The map \( V_{B} \) is defined as
follows: let \( x=\left( x_{1},x_{2},\ldots \right) \in
X_{B}\setminus \left\{ x_{\mathrm{max}}\right\}  \) and let \(
k\geq 1 \) be the smallest integer so that \( x_{k} \) is not a
maximal edge. Let \( y_{k} \) be the successor of \( x_{k} \) and
\( \left( y_{1},\ldots ,y_{k-1}\right)  \) be the unique minimal
path in \( E_{1,k-1} \) connecting \( v_{0} \) with the initial
point of \( y_{k} \). One sets \( V_{B}\left( x\right) =\left(
y_{1},\ldots ,y_{k-1},y_{k},x_{k+1},\ldots \right)  \) and \(
V_{B}\left( x_{\mathrm{max}}\right) =x_{\mathrm{min}} \).

The dynamical system \( \left( X_{B},V_{B}\right)  \) is called
the \emph{Bratteli-Vershik system} generated by \( B=\left(
V,E,\preceq \right)  \). The dynamical system induced by any
contraction of \( B \) is topologically conjugate to \( \left(
X_{B},V_{B}\right)  \). In \cite{HPS} it is proved that any
Cantor minimal system \( \left( X,T\right)  \) is topologically
conjugate to a Bratteli-Vershik system \( \left(
X_{B},V_{B}\right)  \). One says that \( \left( X_{B},V_{B}\right)
\) is a \emph{Bratteli-Vershik representation} of \( \left(
X,T\right)  \). In the sequel we identify $(X,T)$ with one of its
Bratteli-Vershik representations. We always assume the
representations are proper.

A Cantor minimal system is of (topological) finite rank $d\geq 1$
if it admits a Bratteli-Vershik representation such that the
number of vertices per level verify $C(k)\leq d$ for any $k\geq
1$. 
Contracting and microscoping the diagram if needed one can assume (this is done in the
sequel) that $C(k)=d$ for any $k\geq 2$.

A Cantor minimal system is linearly recurrent if it admits a
Bratteli-Vershik representation such that the set $\{M(n);n\geq
1\}$ is finite. Clearly linearly recurrent Cantor minimal systems
are of finite rank (for details about these systems see
\cite{Du1}).

\subsubsection{Associated Kakutani-Rohlin partitions and invariant measures}
Let $(X,T)$ be the minimal Cantor system defined by a properly ordered Bratteli diagrams $B=\left( V,E,\preceq \right)$.

The ordered Bratteli diagram defines for each $n\geq 0$ a
clopen {\it Kakutani-Rohlin} (KR) partition of $X$

$$
\P(n)=\{T^{-j}B_k(n); k \in V_n, \ 0 \leq j < h_k(n) \} \ ,  
$$

with
$
B_k (n) = [e_1 , \dots e_n] ,
$
where $(e_1 , \dots ,e_n)$ is the unique path from $v_0$ to $k$ such that each $e_i$ is minimal for the ordering of $B$.
For each $k \in V_n$ the set $\{T^{-j}B_k(n); 0 \leq j < h_k
(n)
\}$ is the $k$-th tower of $\P(n)$. 
This corresponds to the set of all the paths from $v_0$ to $k\in V_n$ (there are exactly $h_k (n)$ such paths).
The map $\tau_n: X \to V_n$ is
given by $\tau_n(x)=k$ if $x$ belongs to the $k$-th tower of
$\P(n)$. 
Denote by $\T_n$ the $\sigma$-algebra generated by partition the
$\P(n)$; that is, the finite paths joining $v_0$ with any vertex
of $V_n$.

Let $\mu$ be a $T$-invariant measure. It is determined by its
value in $B_k(n)$ for each $n\geq 0$ and $k \in V_n$. Define
$\mu(n)=(\mu_1(n),\ldots,\mu_{C(n)})^T$ with
$\mu_k(n)=\mu(B_k(n))$. A simple computation yields to the
following fundamental relation: 

$$
\mu(n)=M^T(n+1)\mu(n+1)
$$ 

for any $n\geq 1$.

\subsubsection{Return and suffix maps}

Fix $n\in \NN$. The return time of $x$ to $B_{\tau_n(x)}(n)$ is
given by $r_n(x)=\min\{ j\geq 0; T^jx \in B_{\tau_n(x)}(n) \}$.
Define the map $s_n:X\to \NN^{C(n)}$, called {\it suffix map of
order $n$}, by

$$
(s_n(x))_k=\left|
\{e \in E_{n+1}; x_{n+1} \preceq e, x_{n+1}\not = e, k \hbox{ is
the initial vertex of } e\}\right |
$$ 

for all $x=(x_n) \in X$ and $k\in
V_n$. A classical computation gives (see  for example \cite{BDM})
\begin{align}
\label{e:formulareturn}
r_n(x)= s_0(x)+\sum_{k=1}^{n-1}
\langle s_k(x),P(k)H(1)\rangle  \ .
\end{align}

\section{Continuous eigenvalues : general necessary and sufficient conditions}
\label{sec:conti}

Let $(X,T)$ be a Cantor minimal system given by a Bratteli-Vershik
representation $B=\left( V,E,\preceq \right)$. Recall the
associated sequence of matrices is $(M(n);n\geq 1)$, $M(n)>0$ and
$P(n)=M(n)\cdots M(2)$ for $n\geq 2$, and $P(1)=I$. First we
recall a general necessary and sufficient condition to be a
continuous eigenvalue of $(X,T)$ proved in \cite{BDM}.

\begin{prop}
\label{condcont} Let $\lambda=\exp(2i\pi\alpha)$. The following
conditions are equivalent,

\begin{enumerate}
\item
$\lambda$ is a continuous eigenvalue of the Cantor minimal system
$(X,T)$;
\item
$(\lambda^{r_n (x)} ; n\geq 1)$ converges uniformly in $x$, i.e., the
sequence $(\alpha r_n(x) ; n\geq 1)$ converges
$({\rm mod} \ \ZZ )$ uniformly in $x$.
\end{enumerate}
\end{prop}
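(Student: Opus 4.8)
The plan is to prove the equivalence by directly relating the eigenvalue property of a continuous function to the uniform convergence of $(\lambda^{r_n(x)})_n$, using the fact that the return times $r_n(x)$ record how many times $T$ must be applied to bring $x$ back to the base of its tower, together with the density of the sets $B_k(n)$ and the shrinking of the Kakutani-Rohlin partitions $\P(n)$.

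First I would establish the implication (1) $\Rightarrow$ (2). Suppose $f$ is a continuous eigenfunction with $f \circ T = \lambda f$ and $\|f\| = 1$ (which we may assume by ergodicity/minimality, since eigenfunctions have constant modulus). The key observation is that $T^{r_n(x)} x$ and $x$ lie in the same base set $B_{\tau_n(x)}(n)$, and iterating the eigenvalue equation gives $f(T^{r_n(x)} x) = \lambda^{r_n(x)} f(x)$. Fix a point $z_k(n) \in B_k(n)$ for each tower; since $T^{r_n(x)} x \in B_{\tau_n(x)}(n)$, one can write $\lambda^{r_n(x)} = f(T^{r_n(x)} x)/f(x)$, and compare $f$ on these sets. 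As $n \to \infty$ the diameters of the cylinder sets $B_k(n)$, and more importantly of the whole partition atoms, tend to zero, so by uniform continuity of $f$ the ratio $\lambda^{r_n(x)}$ is forced to stabilize uniformly in $x$. The precise mechanism is to show that for $m < n$ large, $\lambda^{r_n(x)}$ and $\lambda^{r_m(x)}$ are uniformly close, i.e.\ the sequence is uniformly Cauchy; this uses that $r_n(x) - r_m(x)$ is, up to the tower structure, controlled by the partition refinement.

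For the converse (2) $\Rightarrow$ (1), I would construct the eigenfunction explicitly. Assuming $(\lambda^{r_n(x)})_n$ converges uniformly, define $f_n(x) = \lambda^{r_n(x)}$ or an averaged/normalized version constant on the atoms of $\P(n)$, and show the limit $f = \lim_n f_n$ exists uniformly, hence is continuous. One then checks the functional equation $f \circ T = \lambda f$: this follows because $r_n(Tx)$ relates to $r_n(x)$ in a controlled way, namely $r_n(Tx) = r_n(x) - 1$ whenever $Tx$ stays in the same tower above the base, which contributes the factor $\lambda^{-1}$ in the limit; the finitely many exceptional points where $Tx$ drops to a new tower or wraps around are handled by continuity and the uniform convergence. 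Normalizing so $f \neq 0$ (e.g.\ $f(x_{\min}) = 1$) gives a genuine continuous eigenfunction.

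The main obstacle I expect is the careful bookkeeping in the converse direction at the boundary between towers: the return-time function $r_n$ is not simply $r_n(Tx) = r_n(x) - 1$ everywhere, because when $x$ sits at the top of its tower the dynamics reenters at the base of a (possibly different) tower, and $r_n$ jumps. Controlling these jumps --- showing they occur on sets that either shrink or are absorbed into the uniform limit so that the functional equation $f \circ T = \lambda f$ holds everywhere rather than merely almost everywhere --- is the delicate point, and it is precisely where the \emph{continuous} (as opposed to measurable) nature of the eigenvalue, encoded in \emph{uniform} rather than $L^2$ convergence, becomes essential. The formula \eqref{e:formulareturn} for $r_n(x)$ in terms of the suffix maps $s_k(x)$ would be the main tool for making these estimates explicit.
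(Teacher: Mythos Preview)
The paper does not actually prove Proposition~\ref{condcont}: it is stated as a result \emph{recalled} from \cite{BDM} (``First we recall a general necessary and sufficient condition to be a continuous eigenvalue of $(X,T)$ proved in \cite{BDM}''), and no argument is given here. So there is no proof in the present paper to compare your proposal against.

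That said, your outline is essentially the standard argument and is correct in spirit. A couple of small points you may want to tighten. For $(1)\Rightarrow(2)$, the cleanest route is simply to observe that $T^{r_n(x)}x\in B(n)=\bigcup_k B_k(n)$ and that $B(n)$ shrinks to $\{x_{\min}\}$; then from $f(T^{r_n(x)}x)=\lambda^{r_n(x)}f(x)$ and $|f|\equiv 1$ you get directly
\[
\lambda^{r_n(x)}=\frac{f(T^{r_n(x)}x)}{f(x)}\longrightarrow \frac{f(x_{\min})}{f(x)}
\]
uniformly, with no need to set up a Cauchy argument. For $(2)\Rightarrow(1)$, note that with your choice $f_n(x)=\lambda^{r_n(x)}$ the relation $r_n(Tx)=r_n(x)-1$ (valid off $B(n)$) yields $f\circ T=\lambda^{-1}f$, not $\lambda f$; you want $f_n(x)=\lambda^{-r_n(x)}$ instead. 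Your identification of the ``boundary'' issue at the base $B(n)$ is exactly right: since $\bigcap_n B(n)=\{x_{\min}\}$, the relation $f(Tx)=\lambda f(x)$ holds for every $x\neq x_{\min}$ by passing to the limit, and then at $x_{\min}$ by continuity of $f$.
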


It follows that,

\begin{coro}
\label{contgen} Let $\lambda \in S^1 = \{ z\in \CC ; |z| = 1 \}$. 
If $\lambda$ is a
continuous eigenvalue of $(X,T)$ then
$$
\lim_{n\to \infty} \lambda^{h_{j_n} (n)} = 1
$$
uniformly in $(j_n; n\in \NN) \in \prod_{n\in \NN} \{1,\dots,C(n)\}$.
\end{coro}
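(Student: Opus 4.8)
The plan is to reduce everything to Proposition \ref{condcont} via the return-time formula \eqref{e:formulareturn}. Since $\lambda$ is a continuous eigenvalue, condition (2) says that $(\lambda^{r_n(x)})_n$ converges uniformly in $x$, hence is uniformly Cauchy; applying this to consecutive indices and using \eqref{e:formulareturn} together with $P(n)H(1)=H(n)$ gives
$$r_{n+1}(x)-r_n(x)=\langle s_n(x),H(n)\rangle ,$$
so that $\delta_n:=\sup_{x\in X}\bigl|\lambda^{\langle s_n(x),H(n)\rangle}-1\bigr|\to 0$. Since $\langle s_n(x),H(n)\rangle=\sum_{e\succ x_{n+1}} h_{\mathrm{init}(e)}(n)$, the sum running over the edges of $E_{n+1}$ strictly above $x_{n+1}$ into its end-point ($\mathrm{init}(e)$ being the initial vertex of $e$), and since $s_n(x)$ depends only on $x_{n+1}$, I would regard $\delta_n$ as controlling $\lambda$ raised to any such partial column-height.

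The core step is to exhibit each individual height $h_j(n)$ as a difference of two such partial sums. First I would treat a vertex $j\in V_n$ that is the initial vertex of some non-minimal edge $e'\in E_{n+1}$ into some $w\in V_{n+1}$. Writing $e$ for the predecessor of $e'$ into $w$ and choosing any two paths $x,x'$ with $x_{n+1}=e$ and $x'_{n+1}=e'$, one has $s_n(x)=s_n(x')+\mathbf{e}_j$, whence $\langle s_n(x),H(n)\rangle-\langle s_n(x'),H(n)\rangle=h_j(n)$ and therefore $|\lambda^{h_j(n)}-1|\le 2\delta_n$. This settles, uniformly, every vertex that is the source of at least one non-minimal edge.

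The remaining obstacle, and the delicate point, is a vertex $j$ all of whose outgoing edges to level $n+1$ are minimal into their end-points. I would first observe that, because $M(n+1)>0$, such a $j$ must be the initial vertex of the minimal edge into \emph{every} $w\in V_{n+1}$, with multiplicity one, so there is at most one such vertex per level, namely $\tau_n(x_{\mathrm{min}})$. For it, picking any $w\in V_{n+1}$ and the path $x^{w}$ with $x^{w}_{n+1}$ minimal into $w$, one gets $\langle s_n(x^{w}),H(n)\rangle=h_w(n+1)-h_j(n)$, that is $\lambda^{h_j(n)}=\lambda^{h_w(n+1)}\,\lambda^{-\langle s_n(x^{w}),H(n)\rangle}$. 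When $d\ge 2$ I can choose $w$ non-exceptional at level $n+1$, since there is at most one exceptional vertex among the $d\ge2$ vertices of $V_{n+1}$; then $|\lambda^{h_w(n+1)}-1|\le 2\delta_{n+1}$ by the previous paragraph, giving $|\lambda^{h_j(n)}-1|\le \delta_n+2\delta_{n+1}$. The case $d=1$ is an odometer in which $h(n)$ is itself a partial sum for a suitable edge, hence immediate.

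Combining the two cases yields $\sup_{j\in V_n}|\lambda^{h_j(n)}-1|\le 2\delta_n+2\delta_{n+1}\to 0$, which is precisely the asserted uniform convergence over all sequences $(j_n)\in\prod_{n}\{1,\dots,C(n)\}$. The subtlety to get right is exactly the escape ``one level up'' for the minimal-path vertex: a naive induction along the minimal path does not close, and it is the observation that each level carries at most one exceptional vertex — forcing the chosen $w$ at level $n+1$ to be non-exceptional — that makes a single step suffice.
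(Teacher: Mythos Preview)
Your argument is correct. The paper does not give a proof of this corollary --- it simply writes ``It follows that'' after Proposition~\ref{condcont} --- so there is nothing to compare against beyond the route you chose, which is the natural one: pass from uniform convergence of $(\lambda^{r_n(x)})_n$ to the uniform Cauchy property, use \eqref{e:formulareturn} to get $\sup_x|\lambda^{\langle s_n(x),H(n)\rangle}-1|\to 0$, and then realize each $h_j(n)$ as a difference of two such suffix sums.

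One remark that shortens your write-up: the careful handling of the ``exceptional'' vertex (the one all of whose outgoing edges are minimal) is unnecessary under the standing reduction the paper invokes in the very next proof. In Theorem~\ref{teo:nec-cont} it assumes, after contraction, that every entry of each $M(n)$ is strictly larger than~$1$; under that hypothesis each $j\in V_n$ has at least two edges into every $w\in V_{n+1}$, hence a non-minimal outgoing edge, and your Case~1 already covers every vertex with the bound $|\lambda^{h_j(n)}-1|\le 2\delta_n$. Your one-level-up escape is a clean way to avoid invoking that contraction, but note it tacitly uses $C(n+1)\ge 2$ rather than a fixed rank $d\ge 2$; when a level has a single vertex you again fall back on the entry-$\ge 2$ reduction (or iterate, which must terminate since $X_B$ is a Cantor set).
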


The following theorem states that the necessary condition in
Theorem 1 part (2) in \cite{BDM} is true in general. Denote by
$\vvert \cdot \vvert$ the distance to the nearest integer vector.

\begin{theo}
\label{teo:nec-cont} Let $(X,T)$ be a Cantor minimal system given
by a Bratteli-Vershik representation $B=\left( V,E,\preceq
\right)$. If $\lambda=\exp(2i\pi \alpha)$ is a continuous
eigenvalue of $(X,T)$ then
$$
\displaystyle \sum_{n\ge 1} \vvert \alpha P(n) H(1) \vvert <\infty
\ .
$$
\end{theo}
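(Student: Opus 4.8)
The plan is to argue by contradiction, using the continuous limit furnished by Proposition \ref{condcont} together with the ordered structure of the diagram, and to locate the real difficulty in upgrading the convergence of Corollary \ref{contgen} to summability.

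First I would fix a continuous eigenfunction. By Proposition \ref{condcont} the phases $\alpha r_n(x)$ converge, uniformly in $x$, modulo $\ZZ$; write $f(x)=\lim_{n}\alpha r_n(x)$ for the limit, viewed in $\RR/\ZZ$, which is then continuous on $X$. Recall $P(n)H(1)=H(n)=(h_1(n),\dots,h_{C(n)}(n))^T$, so the asserted series is $\sum_n\vvert\alpha H(n)\vvert$. The basic mechanism I would extract from the return-time formula \eqref{e:formulareturn} is this: if two points $x,y$ coincide in all coordinates outside a finite window, say they agree on the edges of levels $\le l$ and on the edges of levels $>m$, then $s_k(x)=s_k(y)$ for every $k\ge m$ (and $s_0(x)=s_0(y)$), so $r_{m'}(x)-r_{m'}(y)=r_m(x)-r_m(y)$ for all $m'\ge m$. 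Letting $m'\to\infty$ gives the clean identity $f(x)-f(y)\equiv\alpha\bigl(r_m(x)-r_m(y)\bigr)\pmod{\ZZ}$. Thus a mod-$\ZZ$ displacement of the return times over a window becomes an \emph{exact} value of $f(x)-f(y)$.

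Next I would set up the contradiction. Suppose $\sum_n\vvert\alpha H(n)\vvert=\infty$. By Corollary \ref{contgen} the individual terms $\vvert\alpha H(n)\vvert$ tend to $0$, so the divergent series has arbitrarily small terms. The aim is, for $l$ as large as we please, to produce two points $x,y$ that share the first $l$ edges (hence $d(x,y)\le 2^{-l}$) and also share a common tail, while their return-time difference $D=r_m(x)-r_m(y)$ satisfies $\alpha D\in[\tfrac14,\tfrac12)\pmod{\ZZ}$. Granting this, the identity above yields $\vvert f(x)-f(y)\vvert\ge\tfrac14$ for points within $2^{-l}$ of each other, contradicting the uniform continuity of $f$ once $l$ exceeds the modulus of continuity at scale $\tfrac14$. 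Hence the series converges.

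It remains to build such a pair, and this is where I expect the main obstacle to lie. Two points sharing the edges of levels $\le l$ and $>m$ and differing in between correspond to two paths from a common vertex $a\in V_l$ to a common vertex $b\in V_m$; positivity $M(k)>0$ guarantees an abundance of such routings, and the achievable values of $D$ are exactly the height-gaps between occurrences of the fixed level-$l$ cylinder inside the level-$m$ tower of $b$. These gaps are sums of tower heights across the window, so $\alpha D\pmod{\ZZ}$ is a sum of terms of the form $\alpha h_j(k)$ with $l<k\le m$. The heart of the matter is that I must realize the \emph{divergence} of $\sum_n\vvert\alpha H(n)\vvert$ as a genuine mod-$\ZZ$ displacement of size $\ge\tfrac14$, that is, I must prevent these small terms from cancelling modulo $\ZZ$. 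Concretely I would select at each relevant level a vertex attaining $\vvert\alpha H(k)\vvert$, pass to one of the two sign classes of the nearest-integer representatives in $(-\tfrac12,\tfrac12]$ (one of which still has divergent sum), and accumulate only same-signed contributions; since the terms tend to $0$, the growing partial sum can be stopped the first time it enters $[\tfrac14,\tfrac12)$. Translating this selection into an admissible choice of the two middle paths, so that each chosen height is actually inserted as an occurrence-gap without the ordered structure forcing compensating contributions of opposite sign, is the delicate, structure-dependent step, and is precisely what separates summability from the mere decay supplied by Corollary \ref{contgen}.
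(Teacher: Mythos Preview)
Your strategy is essentially the one the paper follows: split $\alpha H(n)=v(n)+w(n)$ with $w(n)\in\ZZ^{C(n)}$ and $v(n)\to0$, pick at each level a coordinate $l(n)$ realizing $\Vert v(n)\Vert$, separate by sign, and then manufacture a pair $x,y$ whose suffix vectors differ by the canonical vector $e_{l(n)}$ at the selected levels and agree elsewhere. The piece you correctly flag as ``delicate'' is precisely the content of the paper's Lemma~\ref{twopoints}: after contracting so that all entries of the $M(n)$ exceed $1$, and after thinning the selected levels so that consecutive ones are at least two apart (this is why the paper restricts, say, to $I^+\cap 2\NN$), one can build $x,y\in X$ with $s_{j(n)}(x)-s_{j(n)}(y)=e_{i(n)}$ along the chosen subsequence and $s_j(x)=s_j(y)$ otherwise. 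So your outline is right, but the construction you leave open is exactly the lemma that carries the proof.

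Where you diverge from the paper is in framing the endgame as a contradiction via uniform continuity. The paper is more direct and avoids your stopping-time and close-points manoeuvres entirely. It builds a \emph{single} pair $x,y$ (differing at infinitely many levels, not in a finite window) and observes that, by Proposition~\ref{condcont}, $\alpha(r_m(x)-r_m(y))$ converges mod $\ZZ$; by the choice of suffixes this quantity equals $\sum_{n<m,\ n\in I^+\cap 2\NN}\langle e_{l(n)},v(n)+w(n)\rangle$, hence the partial sums $\sum\langle e_{l(n)},v(n)\rangle$ converge mod $\ZZ$. Since the individual terms tend to $0$, convergence mod $\ZZ$ upgrades to convergence in $\RR$, and summability follows. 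This sidesteps the need to land in $[\tfrac14,\tfrac12)$, to control $d(x,y)$, and to produce a different pair for each scale. Your contradiction route can be made to work once Lemma~\ref{twopoints} (in a finite-window variant) is in hand, but the paper's direct argument is shorter and uses the same lemma with less bookkeeping.
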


Before proving Theorem \ref{teo:nec-cont} we introduce an
intermediate statement which gives a more precise interpretation
to this necessary condition.

\begin{lemma}
\label{twopoints} 
Assume coefficients of matrices $(M(n);n\geq 1)$
are strictly bigger than $1$. Let $(j(n); n\in \NN)$ and $(i(n);
n\in \NN)$ be sequences of positive integers such that $j(n+1) -
j(n) \geq 2$ and $i(n) \in \{1, \ldots , C(j(n)) \}$ for all $n\in
\NN$. Then there exist different points $x$ and $y$ in $X$ such
that:
\begin{enumerate}
\item
$\forall \ n\in \NN$, $s_{j(n)}(x)-s_{j(n)}(y)={\rm can}_{i(n)}$ (the
$i(n)$-th canonical vector);
\item
$s_{j} (x) - s_{j} (y) = (0, \dots , 0)^T$ whenever $j\not \in \{
j(n) ; n\in \NN \}$.
\end{enumerate}
\end{lemma}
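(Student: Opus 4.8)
The plan is to construct the two points $x$ and $y$ path-by-path, specifying at each level whether the two infinite paths agree or differ, and to use the hypothesis that all incidence matrix entries exceed $1$ to guarantee that the prescribed suffix differences can actually be realized by genuine edges.

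First I would recall what the suffix map $s_j$ measures: $(s_j(x))_k$ counts the edges in $E_{j+1}$ starting at vertex $k\in V_j$ that are strictly above $x_{j+1}$ in the order $\preceq$. Thus $s_j(x)-s_j(y)={\rm can}_{i}$ means that, among edges of $E_{j+1}$ issuing from vertex $i$, exactly one more edge lies strictly above $x_{j+1}$ than above $y_{j+1}$, while at every other starting vertex the two counts coincide. The natural way to produce this is to arrange that $x_{j+1}$ and $y_{j+1}$ are two \emph{consecutive} edges (in $\preceq$) both issuing from vertex $i$ and both ending at the same vertex $t\in V_{j+1}$, with $x_{j+1}$ the lower of the two. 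Such a pair of edges exists precisely because the entry $M(j+1)_{t,i}\ge 2$ guarantees at least two edges from $i$ to $t$, and the condition that all matrix entries are $>1$ is exactly what we need here, at every relevant level and for every choice of $i$ and $t$.

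The construction then proceeds as follows. At each level $j=j(n)$ in the prescribed set, I fix a target vertex $t(n)\in V_{j(n)+1}$ and select two consecutive edges $e<e'$ from $i(n)$ to $t(n)$, setting $x_{j(n)+1}=e'$ and $y_{j(n)+1}=e$; this realizes condition (1) at level $j(n)$. To fulfill condition (2) I must ensure $s_j(x)=s_j(y)$ for all other $j$, which I achieve by making $x_{j+1}=y_{j+1}$ for every $j\notin\{j(n)\}$. The gap hypothesis $j(n+1)-j(n)\ge 2$ is what makes this compatible with path connectivity: because the levels where $x$ and $y$ differ are separated by at least one intermediate level, I have a free level in between to re-route both paths to a common vertex, so that the edge $x_{j+1}=y_{j+1}$ can be chosen identical again. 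Concretely, I build the paths by downward and upward induction, choosing at each non-prescribed level a common edge ending at a vertex compatible with the already-fixed edges at the adjacent prescribed levels; positivity of all matrices ($M(k)>0$) guarantees the required connecting edges always exist.

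The main obstacle is the bookkeeping that ties together three constraints simultaneously: the endpoint of $x_{j}$ must match the initial vertex of the chosen $x_{j+1}$, the two chosen edges at a prescribed level must share both their initial vertex $i(n)$ and their terminal vertex, and at intermediate levels the single shared edge must reconcile the terminal vertex coming from below with the initial vertex $i(n+1)$ demanded above. Verifying that the gap condition $j(n+1)-j(n)\ge 2$ always leaves enough room to interpolate a common vertex, and that $x\neq y$ (which follows since they already differ at level $j(1)+1$), is the delicate part; everything else reduces to reading off the definition of $s_j$ and invoking the entrywise lower bounds on the $M(k)$. I would finish by checking that the resulting sequences are indeed valid infinite paths in $X_B$ and that conditions (1) and (2) hold exactly as stated.
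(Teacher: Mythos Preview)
Your overall strategy---build $x$ and $y$ edge by edge, make them differ only at the levels $j(n)+1$, and read the suffix relation off---is the natural one, and it is in the same spirit as the paper's sketch (which, as you can see, is itself truncated and does not contain a full argument). However, there is a real gap at the key combinatorial step.

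You assert that $M(j(n)+1)_{t,i(n)}\ge 2$ guarantees two $\preceq$-\emph{consecutive} edges from $i(n)$ to $t$. That is false: having at least two parallel edges says nothing about where they sit in the total order on edges terminating at $t$. Take $V_{j(n)}=V_{j(n)+1}=\{1,2\}$, all entries of $M(j(n)+1)$ equal to $2$, and order the four edges into each terminal vertex so that their initial vertices alternate $1,2,1,2$. Then no two $\preceq$-consecutive edges share an initial vertex, for either choice of $t$, and your construction cannot proceed in this diagram.

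The reason the issue bites is exactly the constraint $x_{j(n)}=y_{j(n)}$ that you impose at the non-prescribed level just below. That forces both $x_{j(n)+1}$ and $y_{j(n)+1}$ to start at the common vertex $\mathrm{term}(x_{j(n)})$; since the only way to get $s_{j(n)}(x)-s_{j(n)}(y)=\mathrm{can}_{i(n)}$ with a common terminal is to take $y_{j(n)+1}$ the immediate successor of $x_{j(n)+1}$ with $\mathrm{init}(y_{j(n)+1})=i(n)$, you end up needing two consecutive edges both starting at $i(n)$. The remedy is to drop the demand that $x$ and $y$ literally coincide away from the prescribed levels and instead allow $x_{j(n)}\ne y_{j(n)}$---with different terminal vertices, namely the initial vertices of $x_{j(n)+1}$ and of its predecessor---while arranging that their suffix \emph{vectors} still agree (for instance by choosing each maximal among edges into its terminal vertex, so both suffixes vanish). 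This is precisely what the gap hypothesis $j(n+1)-j(n)\ge 2$ is buying you: a buffer level at which the two paths may pass through different vertices yet carry equal suffixes. Propagating this consistently from one prescribed level to the next requires more bookkeeping than your sketch indicates; the point I want you to take away is that the sentence ``such a pair of edges exists precisely because $M(j+1)_{t,i}\ge 2$'' is the step that fails, and the repair has to go through condition~(2) rather than through edge-by-edge equality of the two paths.
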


\begin{proof}
For all $n\in \NN$ we choose $\alpha (n)\in V(n)$.
We defined $x=(e(n))_{n\in \NN}$ and  $y=(f(n))_{n\in \NN}$ and 
 
\end{proof}

\begin{proof}[Proof of Theorem \ref{teo:nec-cont}]
We assume without loss of generality that the coefficients of the matrices
$(M(n);n\geq 1)$ are strictly bigger than $1$ and that
$H(1)=(1,\ldots,1)^T$. Let $\lambda$ be a continuous eigenvalue of
$(X,T)$.

We deduce from Corollary \ref{contgen} that $\vvert \alpha P(n)
H(1) \vvert$ converges to $0$ as $n\to \infty$. Then for all
$n\geq 1$ there exist a real vector $v(n)$ and an integer vector
$w(n)$ such that
$$\displaystyle
\alpha P(n) H(1) = v(n) + w(n) \hbox{ and } v(n) \to_{n\to \infty}
0 \ .$$ Thus it is enough to prove that $\sum_{n\geq 1} \Vert v(n)
\Vert < \infty$.

For $n \geq 1$ let $l(n) \in \{1,...,C(n) \}$ be such that
$$|\langle e_{l(n)}, v(n)\rangle  |=\max_{l \in \{1,...,C(n)\}} |\langle e_l, v(n)\rangle  |$$
where $e_{l}$ is the $l$-th canonical vector of $\RR^{C(n)}$. Let
$$
I^+=\{n \geq 1; \langle e_{l(n)}, v(n)\rangle   \geq 0 \}, \ I^-=\{n \geq 1 ;
\langle e_{l(n)}, v(n)>  <0\}.
$$
To prove $\sum_{n\geq 1} \Vert v(n) \Vert < \infty$ one only needs
to show that
$$
\sum_{n\in I^+} \langle e_{l(n)},v(n)\rangle   < \infty \hbox{ and } -
\sum_{n\in I^-} \langle e_{l(n)},v(n)\rangle   < \infty.
$$
Since the arguments we will use are similar in both cases we only
prove the first one. Moreover, to prove $\sum_{n\in I^+}
\langle e_{l(n)}, v(n)\rangle  < \infty$ we only show $\sum_{n\in I^+\cap 2\NN}
\langle e_{l(n)},v(n)\rangle   < \infty$. 
Analogously, one can prove that
$\sum_{n\in I^+\cap (2\NN+1)} \langle e_{l(n)},v(n)\rangle   < \infty$.

Assume $I^+ \cap 2\NN$ is infinite, if not the result follows
directly. Order its elements $j(0)<j(1)< \ldots <j(n)<\ldots$ and
define $i(n)=l(j(n))$ for $n\in \NN$. 
Let $x,y \in X$ be the
points given by Lemma \ref{twopoints} using these sequences.

Now, from equality \eqref{e:formulareturn} and Proposition
\ref{condcont}, one deduces that

\begin{align*}
&\alpha (r_m (x) - r_m (y)) \\
&= \alpha \sum_{n\in
\{1,...,m-1\}\cap I^+\cap 2\NN} \langle (s_n(x)-s_n(y)),
P(n)H(1)\rangle  + (s_0(x)-s_0(y))\\
&=\alpha \sum_{n\in \{1,...,m-1\}\cap I^+\cap 2\NN} \langle e_{l(n)},
P(n)H(1)\rangle  \\
&=\sum_{n\in \{1,...,m-1\}\cap I^+\cap 2\NN} \langle e_{l(n)},v(n)+w(n)\rangle 
\end{align*}
converges $(~\mod \ZZ)$ when $m \to \infty$. Then $\sum_{n\in
\{1,...,m-1\}\cap I^+\cap 2\NN} \langle e_{l(n)}, v(n)\rangle $ converges
$(~\mod \ZZ)$ when $m\to \infty$. But $\langle e_{l(n)},v(n)\rangle $ tends to
0, hence the series $\sum_{n\in I^+\cap 2\NN} \langle e_{l(n)},v(n)\rangle $
converges.
\end{proof}

The following theorem states that continuous eigenvalues are
always constructed from the subspaces,

$$
\left\{
v \in \RR^{C(m)} ; P(n,m) v\to_{n\to \infty} 0
\right\} , \ m\geq 2,
$$

 defined by the
sequence of incidence matrices $(M(n);n\geq 1)$.
In the sequel we use the norm $||.||$ defined by $||v||= \max_i |v_i|$ for all $v\in \RR^d$.

\begin{theo}
\label{th:condcont} Let $(X,T)$ be a Cantor minimal system given
by a Bratteli-Vershik representation $B=\left( V,E,\preceq
\right)$. Let $\lambda=\exp(2i\pi\alpha)$ be a continuous
eigenvalue of $(X,T)$. Then, there exist $m \in \NN$, $v \in
\RR^{C(m)}$ and $w \in \ZZ^{C(m)}$ such that
$$
\displaystyle \alpha P(m) H(1)=v + w \hbox{ and } P(n,m) v
\rightarrow_{n\to \infty} 0.
$$
\end{theo}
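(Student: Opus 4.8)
The plan is to use Theorem \ref{teo:nec-cont} to fix a good integer/real splitting of $\alpha H(n)$, and then to reduce the statement to an integrality (``no carry'') assertion that the continuous eigenvalue hypothesis forces. By Theorem \ref{teo:nec-cont} we have $\sum_{n\ge 1}\vvert \alpha H(n)\vvert<\infty$ (recall $P(n)H(1)=H(n)$), so for $n$ large the nearest integer vector to $\alpha H(n)$ is well defined and we may write $\alpha H(n)=v(n)+w(n)$ with $w(n)\in\ZZ^{C(n)}$ and $\|v(n)\|\to 0$. Set $\delta(n)=M(n+1)v(n)-v(n+1)$; since $\alpha H(n+1)=M(n+1)\,\alpha H(n)$, one checks that $\delta(n)=w(n+1)-M(n+1)w(n)\in\ZZ^{C(n+1)}$. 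The key reduction is that it suffices to prove $\delta(n)=0$ for all large $n$: if $\delta(n)=0$ for $n\ge m$ then $w(n)=P(n,m)w(m)$ for all $n\ge m$, and taking $v=v(m)$, $w=w(m)$ gives $\alpha P(m)H(1)=\alpha H(m)=v+w$ together with $P(n,m)v=P(n,m)v(m)=\alpha H(n)-w(n)=v(n)\to 0$, which is exactly the desired conclusion.

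Next I would extract the extra information carried by continuity. By Proposition \ref{condcont} the sequence $(\lambda^{r_n(x)})$ converges uniformly in $x$, hence it is uniformly Cauchy and in particular $\sup_x\vvert\alpha(r_{n+1}(x)-r_n(x))\vvert\to 0$. Using \eqref{e:formulareturn} one has $r_{n+1}(x)-r_n(x)=\langle s_n(x),H(n)\rangle$, and since $\langle s_n(x),w(n)\rangle\in\ZZ$ this says
\[
\sup_{x\in X}\vvert \langle s_n(x),v(n)\rangle\vvert\longrightarrow 0 .
\]
This is the crucial strengthening of mere summability: it controls $\langle c,v(n)\rangle$ modulo $\ZZ$, uniformly, over \emph{every} suffix vector $c=s_n(x)$ that actually occurs, not only over the extreme (minimal or maximal edge) ones, whose contributions are automatically small.

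The heart of the argument, and the step I expect to be the main obstacle, is to turn this uniform control into $\delta(n)=0$ eventually. Fix a vertex $i\in V_{n+1}$ and let the ``threshold'' edge $x_{n+1}$ into $i$ run, in the Vershik order, from the maximal edge down to the minimal one; each time an edge with initial vertex $k$ is crossed, $s_n(x)$ increases by the canonical vector ${\rm can}_k$, so the real numbers $\langle s_n(x),v(n)\rangle$ describe a walk starting at $0$, with increments of size at most $\|v(n)\|$, and total displacement $(M(n+1)v(n))_i$ up to one such increment, i.e.\ essentially $\delta_i(n)+v_i(n+1)$. Suppose $\delta_i(n)\neq 0$ for infinitely many pairs $(n,i)$. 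Then, for $n$ large enough that $\|v(n)\|$ is small, this walk moves a net amount at least $1/2$ while taking steps of size less than $1/4$; by a discrete intermediate value argument some partial sum, hence some achievable value $\langle s_n(x),v(n)\rangle$, must have distance to $\ZZ$ bounded below by an absolute positive constant. This contradicts the uniform convergence to $0$ displayed above. Hence $\delta(n)=0$ for all large $n$, and the reduction of the first paragraph finishes the proof. The delicate points to get right are the verification that every needed intermediate suffix vector is realized by a genuine point of $X$ (one completes $x$ arbitrarily above and below level $n+1$), and the bookkeeping of signs in the walk when the coordinates of $v(n)$ are not all of the same sign.
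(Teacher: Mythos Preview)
Your proposal is correct and follows essentially the same route as the paper: both write $\alpha H(n)=v(n)+w(n)$, extract from Proposition~\ref{condcont} that $\vvert\langle s_n(x),v(n)\rangle\vvert$ is uniformly small, and then run the suffix vectors $s_n(\cdot)$ through a single level-$(n{+}1)$ tower as a walk with increments of size at most $\|v(n)\|$ to force $M(n+1)v(n)=v(n+1)$ for all large $n$. Your discrete intermediate-value contradiction is just a repackaging of the paper's direct induction (the paper fixes the threshold $1/4$, shows by induction along the tower that $|\langle s_n(T^{-j_m}x),v(n)\rangle|<1/4$ absolutely, and concludes $\|M(n+1)v(n)\|<1/2$); and note that the paper only needs Corollary~\ref{contgen}, not the full summability of Theorem~\ref{teo:nec-cont}, for the initial splitting.
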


\begin{proof}
We deduce from Corollary \ref{contgen} that $\vvert \alpha P(n)
H(1) \vvert$ converges to $0$ as $n$ tends to $\infty$. Hence, for
every $n \geq 2$ one can write $\alpha P(n)H(1)=v(n)+w(n)$, where
$w (n)$ is an integer vector and $v(n)$ is a real vector with
$\Vert v (n)\Vert\to 0$ as $n\to \infty$. Clearly

\begin{align}
\label{increment}
\alpha P(n+1)H(1)=M(n+1)v(n)+ M(n+1) w(n) = v(n+1)+w(n+1) .
\end{align}
We start proving that there exists $m\geq 1$ such that for all
$n\geq m$ one has $M(n+1)v(n) = v(n+1)$. From Proposition
\ref{condcont} one knows there exists $m\geq 1$ such that for all
$n\geq m$ and all $x\in X$ it holds
$$
\vvert \langle s_n (x),\alpha P(n) H(1)\rangle  \vvert < \frac{1}{4} \hbox{ and
} \Vert v(n) \Vert < \frac{1}{4}  .
$$

Hence, for all $n\geq m$ and all $x\in X$, one gets
\begin{align}
\label{difference} \vvert \langle s_n (x), v(n)\rangle  \vvert < \frac{1}{4} .
\end{align}

Fix $n\geq m$. Consider $x\in B_k (n+1)$ for some $1\leq k \leq
C(n+1)$ and let $0=j_1 < j_2 <\cdots < j_l $ be the collection of
all the integers $0\leq j< h_k (n+1)$ such that $T^{-j}x \in
\cup_{i\in \{1,\ldots,C(n)\}}B_i(n)$. Remark that

\begin{align}
\label{mn}
\Vert s^T_n (T^{-j_l} x) - e_k^T M(n+1) \Vert & = 1 \\
\label{pas} \Vert s_n (T^{-j_{m+1}} x) - s_n (T^{-j_{m}} x) \Vert
& = 1
\end{align}
for all $1\leq m\leq l-1$. Let $1\leq m \leq l-1$ and suppose
$\left| \langle s_n (T^{-j_{m}} x), v(n)\rangle  \right| < 1/4$. Then, from
\eqref{pas},

\begin{align*}
&| \langle s_n (T^{-j_{m+1}} x), v(n)\rangle  | \\ &= | \langle s_n (T^{-j_{m}} x),
v(n)\rangle  + \langle s_n (T^{-j_{m+1}} x), v(n)\rangle  - \langle s_n (T^{-j_{m}} x), v(n)\rangle  |
\\ & < \frac{1}{2} .
\end{align*}

From \eqref{difference} one gets that $ |\langle  s_n (T^{-j_{m+1}} x),
v(n)\rangle  | < \frac{1}{4} $. 
Thus, as $| \langle s_n (x), v(n)\rangle | = 0$, it
follows by induction that $\langle  s_n (T^{-j_{l}} x), v(n)\rangle  <
\frac{1}{4}$. 
Therefore, from \eqref{mn} one deduces that $| \langle e_k,
M(n+1)v(n)\rangle  | < 1/2$. This is true for all $1\leq k \leq C(n+1)$,
then $\Vert M(n+1) v(n) \Vert < 1/2$.

Finally, from \eqref{increment} one deduces that for all $n\geq
m$,
\begin{align}
M(n+1)v(n) = v(n+1) \hbox{ and } M(n+1) w(n) = w(n+1) .
\end{align}

To conclude it is enough to set $v= v(m)$ and $w = w(m)$.
\end{proof}

\begin{lemma} \label{ortho}
Let $(X,T)$ be a Cantor minimal system given by a Bratteli-Vershik
representation $B=\left( V,E,\preceq \right)$. Consider $m \in
\NN$ and $v \in \RR^{C(m)}$ such that $P(n,m)v \to 0$ as $n\to
\infty$. Then $\langle v,\mu(m)\rangle =0$ for any $T$-invariant probability
measure $\mu$.
\end{lemma}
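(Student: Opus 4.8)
The plan is to exploit the duality between the way invariant measures transform up the diagram and the way vectors are pushed by the products $P(n,m)$. The fundamental relation $\mu(k)=M^T(k+1)\mu(k+1)$ transports measures by the transposed matrices, while $P(n,m)=M(n)\cdots M(m+1)$ acts on vectors by the matrices themselves; these two actions are adjoint to one another, and that is exactly the mechanism that makes the hypothesis $P(n,m)v\to 0$ bite.

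First I would iterate the fundamental relation from level $m$ up to level $n$, obtaining $\mu(m)=M^T(m+1)\cdots M^T(n)\,\mu(n)=P(n,m)^T\mu(n)$ for every $n\ge m$, since transposition reverses the order of the product. Taking the inner product with $v$ and moving the transpose across the pairing yields the key identity $\langle v,\mu(m)\rangle=\langle P(n,m)v,\mu(n)\rangle$, valid for all $n\ge m$. The left-hand side does not depend on $n$, so it suffices to show the right-hand side tends to $0$.

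To control the right-hand side I would pair the $\ell^\infty$ decay of $P(n,m)v$ against a uniform $\ell^1$ bound on $\mu(n)$. The crucial observation is that the towers of the Kakutani-Rohlin partition $\P(n)$ partition $X$, so $\sum_{k}h_k(n)\mu_k(n)=\langle H(n),\mu(n)\rangle=1$; since every height $h_k(n)\ge 1$ and every $\mu_k(n)\ge 0$, this forces $\sum_k\mu_k(n)\le 1$, that is, $\Vert\mu(n)\Vert_1\le 1$ uniformly in $n$. By H\"older's inequality $|\langle P(n,m)v,\mu(n)\rangle|\le \Vert P(n,m)v\Vert\,\Vert\mu(n)\Vert_1\le \Vert P(n,m)v\Vert\to 0$ as $n\to\infty$, whence $\langle v,\mu(m)\rangle=0$.

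The argument is short, and the only genuinely delicate point, the one I expect to be the main obstacle, is the uniform boundedness of $\mu(n)$ in the norm dual to the one in which $P(n,m)v$ decays. Without it the limit argument collapses, since $P(n,m)v\to 0$ only in the $\ell^\infty$ sense and the entries of $\mu(n)$ need not a priori be summable if $C(n)$ were unbounded; the partition normalization $\langle H(n),\mu(n)\rangle=1$ together with $h_k(n)\ge 1$ is precisely what rescues it, and this is the step I would be most careful to justify.
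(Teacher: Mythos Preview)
Your proof is correct and follows exactly the same route as the paper: write $\mu(m)=P(n,m)^T\mu(n)$, move the transpose across the pairing to get $\langle v,\mu(m)\rangle=\langle P(n,m)v,\mu(n)\rangle$, and bound the right-hand side by the decay of $P(n,m)v$ against a uniform bound on $\mu(n)$. Your justification of $\Vert\mu(n)\Vert_1\le 1$ via $\langle H(n),\mu(n)\rangle=1$ and $h_k(n)\ge 1$ is in fact slightly more careful than the paper's one-line estimate, which simply writes $\Vert P(n,m)v\Vert\cdot\Vert\mu(n)\Vert$ without making the $\ell^\infty$--$\ell^1$ duality explicit.
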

\begin{proof}
From definition one has
\begin{align*}
\langle v,\mu(m)\rangle &=\langle v,P^T(n,m)\mu(n)\rangle =\langle P(n,m)v,\mu(n)\rangle  \\
&\leq \Vert P(n,m)v\Vert \cdot \Vert \mu(n) \Vert
\end{align*}
and the last term converges to $0$ as $n\to\infty$. Thus
$\langle v,\mu(m)\rangle =0$.
\end{proof}

\begin{coro}
\label{description} Let $(X,T)$ be a Cantor minimal system given
by a Bratteli-Vershik representation $B=\left( V,E,\preceq
\right)$ and let $\mu$ be a $T$-invariant probability measure. Let
$\lambda = \exp(2i\pi \alpha )$ be a continuous eigenvalue of
$(X,T)$. Then one of the following conditions holds:
\begin{enumerate}
\item
$\alpha$ is rational with a denominator dividing $\gcd(h_i(m);
1\leq i\leq C(m))$ for some $m \in \NN$.
\item
There exist $m \in \NN$ and an integer vector $w \in  \ZZ^{C(m)}$
such that $\displaystyle\alpha = \langle w, \mu(m)\rangle $.
\end{enumerate}
\end{coro}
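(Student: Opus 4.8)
The plan is to feed the decomposition produced by Theorem \ref{th:condcont} into Lemma \ref{ortho}, after first recording the normalization identity relating the tower heights to the measure vector. Concretely, Theorem \ref{th:condcont} furnishes an integer $m$, a real vector $v\in\RR^{C(m)}$ and an integer vector $w\in\ZZ^{C(m)}$ with
\[
\alpha P(m)H(1)=\alpha H(m)=v+w,\qquad P(n,m)v\to 0 \text{ as } n\to\infty .
\]
The one extra ingredient I need is that $\langle H(m),\mu(m)\rangle=1$ for every $T$-invariant probability measure $\mu$. This is just the statement that the towers of the Kakutani--Rohlin partition $\P(m)$ cover $X$: since $\mu(T^{-j}B_k(m))=\mu_k(m)$ by invariance, one has $\sum_{k\in V_m} h_k(m)\mu_k(m)=\mu(X)=1$, that is $\langle H(m),\mu(m)\rangle=1$.

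With this identity in hand I would split into two cases according to whether $v=0$. If $v=0$, then $\alpha H(m)=w\in\ZZ^{C(m)}$, so $\alpha h_k(m)\in\ZZ$ for every $k\in V_m$. Writing $\alpha=p/q$ in lowest terms, coprimality of $p$ and $q$ gives $q\mid h_k(m)$ for all $k$, hence $q\mid\gcd(h_k(m);1\le k\le C(m))$, which is exactly conclusion (1). If instead $v\neq 0$, I take the inner product of $\alpha H(m)=v+w$ with $\mu(m)$ and use the normalization identity together with Lemma \ref{ortho}, which applies precisely because $P(n,m)v\to 0$ forces $\langle v,\mu(m)\rangle=0$. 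This yields
\[
\alpha=\alpha\langle H(m),\mu(m)\rangle=\langle v,\mu(m)\rangle+\langle w,\mu(m)\rangle=\langle w,\mu(m)\rangle,
\]
which is conclusion (2). (The inner-product computation is in fact valid when $v=0$ as well, so (2) always holds; isolating the case $v=0$ is worthwhile only because it produces the sharper, measure-free rational description (1).)

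The argument is short because all the analytic work sits in Theorem \ref{th:condcont} and Lemma \ref{ortho}, which I am free to invoke. The only genuinely new points are the normalization $\langle H(m),\mu(m)\rangle=1$ and the elementary number theory in the case $v=0$. I therefore do not expect a real obstacle here; the only care required is the coprimality step passing from $\alpha h_k(m)\in\ZZ$ to the divisibility of the denominator of $\alpha$ by the relevant $\gcd$, and a moment's attention to the fact that Lemma \ref{ortho} is what licenses discarding the $\langle v,\mu(m)\rangle$ term.
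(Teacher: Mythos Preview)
Your proof is correct and follows essentially the same route as the paper: invoke Theorem~\ref{th:condcont} to obtain $\alpha H(m)=v+w$, split on whether $v=0$, and in the nonzero case pair with $\mu(m)$ using Lemma~\ref{ortho} together with $\langle H(m),\mu(m)\rangle=1$. Your write-up supplies a bit more detail (the justification of the normalization identity and the coprimality step), and your parenthetical remark that conclusion~(2) in fact holds in both cases is a valid extra observation.
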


\begin{proof}
Let $m$, $v$ and $w$ be as in Theorem \ref{th:condcont}. We recall
that $P(m)H(1)= H(m)$. Assume $v=0$. Then $\alpha H(m)=w$ and thus
$\alpha$ is rational with a denominator dividing $\gcd(h_i(m);
1\leq i\leq C(m))$. Now suppose $v \not = 0$. From Lemma
\ref{ortho}, $\langle v,\mu(m)\rangle =0$. Thus, from $w=\alpha H(m)-v$ and
$\langle \mu(m),H(m)\rangle =1$ one gets $\alpha=\langle w,\mu(m)\rangle $.
\end{proof}

Part (2) of previous lemma left open the question whether any
integer vector $w \in \ZZ^{C(m)}$, for some $m \in \NN$, can
produce a continuous eigenvalue of the system by taking
$\alpha=\langle w,\mu(m)\rangle $. It is enough to consider topological weakly
mixing Cantor minimal systems to see that in some cases not all
integer vectors can produce a continuous eigenvalue. In general,
the set of integer vectors from which one can define continuous
eigenvalues of the system is a discrete group. Normally, very
difficult to describe explicitly. In the next section we give a
slightly more precise description of such group in the finite rank
case.

\section{Continuous eigenvalues of finite rank systems}

Let $(X,T)$ be Cantor minimal system of finite rank $d$. Fix a
Bratteli-Vershik representation of $(X,T)$ with exactly $d$
vertices per level which sequence of incidence matrices is $(M(n) ; n\geq 1)$.

\subsection{Rationally independent continuous eigenvalues
} \label{preuvemultind}

Let $E(X,T)$ be the additive group of continuous eigenvalues of
$(X,T)$, that is,
$$E(X,T)=\{\alpha \in \RR ; \exp(2i\pi\alpha) \mbox{ is a
continuous eigenvalue of } (X,T)\} \ .$$

In this section we study the maximal number $\eta(X,T)$ of
rationally independent elements in $E(X,T)$. Remark that $1$ is
always an eigenvalue of $(X,T)$ so $\ZZ \subseteq E(X,T)$. We need
the following lemma whose proof is left to the reader.

\begin{lemma}
\label{dimension} Let $(X,T)$ be a Cantor minimal system of finite
rank $d$. Then, there are at most $d$ ergodic measures
$\mu_1,\ldots,\mu_l$ ($l\leq d$). Moreover, there exits $m \in
\NN$ such that for all $n\geq m$ vectors
$\mu_1(n),\ldots,\mu_l(n)$ are linearly independent.
\end{lemma}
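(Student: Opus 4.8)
The plan is to establish both assertions simultaneously by showing that \emph{any} finite family of distinct ergodic measures yields vectors that are eventually linearly independent in $\RR^d$; since such vectors live in a $d$-dimensional space, this forces the family to have at most $d$ members, which bounds the total number of ergodic measures by $d$ and gives the stated independence for the full list.

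First I would fix distinct ergodic probability measures $\mu_1,\ldots,\mu_l$ and exploit the relation $\mu(m)=P^T(n,m)\mu(n)$ obtained by iterating $\mu(n)=M^T(n+1)\mu(n+1)$. Setting $V_n=\mathrm{span}\{\mu_1(n),\ldots,\mu_l(n)\}\subseteq \RR^d$, the relation gives $V_m=P^T(n,m)V_n$ for $m\le n$, so $\dim V_m\le \dim V_n$; thus $n\mapsto \dim V_n$ is nondecreasing and bounded, hence stabilizes at some value $r$ for all $n\ge m$. Dually, the spaces of linear relations $K_n=\{c\in \RR^l ; \sum_i c_i\mu_i(n)=0\}$ satisfy $K_n\subseteq K_{m}$ for $m\le n$ and therefore also stabilize, equal to a fixed subspace $K$ of dimension $l-r$ for all $n\ge m$.

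The crux is to show $r=l$, i.e. $K=\{0\}$. Suppose $0\neq c\in K$ and consider the finite signed $T$-invariant measure $\nu=\sum_i c_i\mu_i$. By construction $\nu(B_k(n))=0$ for all $k$ and all $n\ge m$; by $T$-invariance $\nu(T^{-j}B_k(n))=\nu(B_k(n))=0$, so $\nu$ vanishes on every atom of the Kakutani-Rohlin partition $\P(n)$. Since the algebras $\T_n$ increase to $\B(X)$, the signed measure $\nu$ is identically zero. Distinct ergodic measures are mutually singular, hence linearly independent as measures, forcing $c=0$, a contradiction. Therefore $r=l$, the vectors $\mu_1(n),\ldots,\mu_l(n)$ are linearly independent for all $n\ge m$, and in particular $l\le d$.

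I expect the main obstacle to be the measure-theoretic step: carefully justifying that a $T$-invariant signed measure vanishing on the bases $B_k(n)$ must vanish identically. This requires combining invariance (to pass from bases to full towers) with the fact that the partitions $\P(n)$ generate the Borel $\sigma$-algebra, together with the standard fact that finitely many mutually singular measures are linearly independent. The linear-algebra bookkeeping (monotonicity and stabilization of $\dim V_n$ and $K_n$) is routine once the right quantities are introduced.
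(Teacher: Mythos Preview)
The paper does not actually prove this lemma; it states explicitly that ``the proof is left to the reader.'' Your argument is correct and is precisely the kind of proof the authors presumably have in mind: you use the recursion $\mu(m)=P^T(n,m)\mu(n)$ to get monotonicity of $\dim V_n$ (equivalently, of the relation spaces $K_n$), and then you identify the stable relation space with a $T$-invariant signed measure that vanishes on the generating clopen algebra, forcing it to be zero by mutual singularity of ergodic measures. One small remark: since $K_n\subseteq K_m$ for $m\le n$ and the sequence stabilizes at $K=K_m$, in fact $K\subseteq K_n$ for \emph{all} $n\ge 1$, so $\nu$ already vanishes on every $\P(n)$, not just for $n\ge m$; but as you note, the tail $(\T_n)_{n\ge m}$ still generates $\B(X)$, so your version suffices.
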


\begin{theo}
\label{th:bound} Let $(X,T)$ be a Cantor minimal system of finite
rank $d$. Let $\mu_1 , \dots , \mu_l$, $l\leq d$, be all its
ergodic measures. Then, $\eta(X,T) \leq d - l + 1$.
\end{theo}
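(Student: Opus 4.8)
The plan is to bound $\eta(X,T)$ by studying how continuous eigenvalues are represented through the ergodic measures. By Corollary \ref{description}, every continuous eigenvalue $\alpha$ either is rational of a special type or can be written as $\alpha = \langle w, \mu(m)\rangle$ for some integer vector $w \in \ZZ^{C(m)}$ and some level $m$. The key idea is that the value $\langle w, \mu(m)\rangle$ depends, through the invariant measure $\mu$, only on how $w$ pairs with the finitely many ergodic measures. Since there are $l$ ergodic measures $\mu_1, \dots, \mu_l$ and the system is uniquely ergodic over each ergodic component, I expect to reduce the question to a linear-algebra count in a space of dimension governed by $d$ and $l$.

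First I would fix a level $m$ large enough that, by Lemma \ref{dimension}, the vectors $\mu_1(m), \dots, \mu_l(m)$ are linearly independent in $\RR^d$ (recall $C(m) = d$). Using the relation $\mu(m) = M^T(m+1)\mu(m+1)$ and the compatibility $\alpha P(m)H(1) = v + w$ with $P(n,m)v \to 0$ from Theorem \ref{th:condcont}, I would like to show that a continuous eigenvalue is essentially determined by the tuple $(\langle w, \mu_1(m)\rangle, \dots, \langle w, \mu_l(m)\rangle)$, together with the integer data. The map $w \mapsto (\langle w, \mu_i(m)\rangle)_{i=1}^l$ sends $\ZZ^d$ into $\RR^l$, so its image spans a subspace of dimension at most $l$. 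One should account for $1 \in E(X,T)$ (coming from $w = H(m)$, since $\langle \mu(m), H(m)\rangle = 1$), which contributes the rational directions.

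To turn this into the bound $d - l + 1$, I would organize the count as follows. The continuous eigenvalues $\alpha$ arising with $v \neq 0$ satisfy the constraint $\langle v, \mu_i(m)\rangle = 0$ for every ergodic $\mu_i$ by Lemma \ref{ortho}; thus $v$ lies in a subspace orthogonal to the span of $\mu_1(m), \dots, \mu_l(m)$, which has dimension $d - l$. Combined with the one extra rational direction furnished by the ever-present eigenvalue $1$, the total number of rationally independent values the data can produce is at most $(d - l) + 1$. The argument requires showing that if $\alpha_1, \dots, \alpha_r$ are rationally independent continuous eigenvalues, then their associated vectors $v_1, \dots, v_r$ (after passing to a common level $m$ by Theorem \ref{th:condcont}) give rise to at most $d - l + 1$ independent directions, so that $r \le d - l + 1$.

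The main obstacle I anticipate is handling the level-dependence uniformly: each eigenvalue $\alpha_j$ a priori comes with its own level $m_j$ from Theorem \ref{th:condcont}, and I must pass to a common level $m \geq \max_j m_j$ while controlling how $v$ and $w$ transform under multiplication by the $M(n)$. Here the relations $M(n+1)v(n) = v(n+1)$ and $M(n+1)w(n) = w(n+1)$ established in the proof of Theorem \ref{th:condcont} are essential, since they guarantee that the representation $\alpha P(m)H(1) = v + w$ is consistent across levels and that the orthogonality $\langle v, \mu_i(m)\rangle = 0$ persists. The second delicate point is to confirm that rational independence of the $\alpha_j$ over $\QQ$ forces linear independence (over $\QQ$, hence the relevant counting) of the corresponding residues modulo the lattice generated by the integer vectors and the rational eigenvalue $1$, which is precisely where the dimension $d - l$ of the orthogonal complement enters and caps the count.
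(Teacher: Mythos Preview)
Your outline has the right ingredients but focuses on the wrong family of vectors, and this creates a gap you have not closed. You propose to bound $\eta$ by observing that the real vectors $v_i$ lie in the orthogonal complement of $\mathrm{span}\{\mu_1(m),\dots,\mu_l(m)\}$, a space of real dimension $d-l$. The difficulty is that linear dependence of the $v_i$ over $\RR$ does not by itself yield a \emph{rational} relation among the $\alpha_i$: the $v_i$ are real vectors, and a real linear relation $\sum c_i v_i = 0$ gives nothing unless you can force the $c_i$ into $\QQ$. Your last paragraph acknowledges this is ``the second delicate point'' but does not explain how to resolve it; in fact, resolving it (as the paper does later in Proposition~\ref{linearspaces}) requires passing through the integer vectors $w_i$ anyway.

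The paper's proof sidesteps this by working directly with the $w_i$. From $\alpha_i H(m)=v_i+w_i$ and Lemma~\ref{ortho} one gets $\langle w_i,\mu_j(m)\rangle=\alpha_i$ for every ergodic $\mu_j$, hence $\langle w_i,\mu_1(m)-\mu_j(m)\rangle=0$ for $2\le j\le l$. Thus each $w_i$ lies in the orthogonal complement of the $(l-1)$-dimensional space spanned by the differences $\mu_1(m)-\mu_j(m)$, a space of dimension $d-l+1$. Because the $w_i$ are \emph{integer} vectors, real linear dependence among any $d-l+2$ of them upgrades to an integer relation $\sum \epsilon_i w_i=0$; pairing with $\mu_1(m)$ then gives $\sum \epsilon_i \alpha_i=0$, contradicting rational independence. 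This is both shorter and avoids the real-versus-rational obstacle in your sketch. Your remark about passing to a common level $m$ is correct and is handled exactly as you say, via the relations $M(n+1)v(n)=v(n+1)$ and $M(n+1)w(n)=w(n+1)$.
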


\begin{proof}
Fix a Bratteli-Vershik representation of $(X,T)$ with exactly $d$
vertices per level. Put $\eta=\eta(X,T)$ and assume $\eta >
d-l+1$. Let $\{\alpha_1,\ldots,\alpha_\eta\}$ be a set of
rationally independent elements in $E(X,T)$. From Theorem
\ref{th:condcont}, there exist $m \in \NN$ and vectors $v_i \in
\RR^{d}$, $w_i \in \ZZ^{d}$, for $i\in \{ 1,\ldots,\eta\}$, such
that $\alpha_i H(m)-v_i=w_i$ and $P(n,m)v_i \to_{n\to \infty} 0$.
Consider $m$ so large that Lemma
\ref{dimension} is also verified from such an integer.

From Lemma \ref{ortho} one has that for all $1\leq i \leq \eta$
and all $1\leq j \leq l$, $\langle w_i,\mu_j(m)\rangle =\alpha_i$. Thus,
$\langle w_i,\mu_1(m)-\mu_j(m)\rangle =0$ for $2\leq j \leq l$. Now, from Lemma
\ref{dimension} one deduces that
$\{\mu_1(m)-\mu_2(m),\ldots,\mu_1(m)-\mu_l(m)\}$ generates a
$(l-1)$-dimensional vector space. We conclude that the linear
space generated by $w_1,\ldots ,w_\eta$ is of dimension at most
$d-l+1$. Consequently, there exist integers $\epsilon_1, \ldots ,
\epsilon_{d-l+2}$ with $|\epsilon_1| + \ldots + |\epsilon_{d-l+2}|
\not = 0$ and
$$
\epsilon_1w_1 + \ldots + \epsilon_{d-l+2} w_{d-l+2} = 0 \ .
$$
Thus,
$$
\epsilon_1 \alpha_1 + \ldots + \epsilon_{d-l+2}\alpha_{d-l+2} = 0
$$
which contradicts the fact that $\{\alpha_1, \ldots ,
\alpha_\eta\}$ is a set of rationally independent elements in
$E(X,T)$.
\end{proof}

Remark that from the proof of the theorem, a set of rationally
independent generators of $E(X,T)$ can be determined from a single
level $m$ once we know $\mu(m)$ and good integer vectors. Of
course, this level $m$ can be very large and difficult to get.

\medskip

Put $\eta=\eta(X,T)$. If $\eta=d$ we say that $(X,T)$ is of
maximal type. From Theorem \ref{th:bound} one has that maximal
type systems are uniquely ergodic. Conversely, if $(X,T)$ is
uniquely ergodic, then it has at most $d$ rationally independent
continuous eigenvalues but it is not necessarily of maximal type.
As an example consider a substitution system whose incidence
matrix $M(n)=A$ for all $n\geq 2$ such that $A$ is primitive and
has two real eigenvalues with modulus bigger than one (for details
about Bratteli-Vershik representations of substitution systems see
\cite{DHS}).

Since $1 \in E(X,T)$, one can always produce rationally
independent generators of $E(X,T)$ containing $1$. Observe that
rational eigenvalues are associated to $1$. Fix
$\{1,\alpha_1,\ldots,\alpha_{\eta-1}\}$ a set of rationally
independent generators of $E(X,T)$. Let $\mu$ be an ergodic
measure of $(X,T)$.

From Theorem \ref{th:condcont} there is $m \in \NN$ such that for
all $1 \leq i \leq \eta -1$ there exist a real vector $v_i \in
\RR^d$ and an integer vector $w_i \in \ZZ^d$ satisfying $\alpha_i
H(m)  = v_i +w_i$ and $P(n,m)v_i \to 0$ as $n\to \infty$. From
Lemma \ref{ortho}, each $v_i$ is orthogonal to the linear space
spanned by $\mu(m)$, thus 

\begin{equation}
\label{alphai}
\alpha_i=\langle w_i,\mu (m)\rangle  .
\end{equation}

\begin{prop}
\label{linearspaces} The vectors $\{v_1, \ldots ,v_{\eta-1}\}$ and
the vectors $\{w_1,\ldots,w_{\eta-1},H(m)\}$ are linearly
independent.
\end{prop}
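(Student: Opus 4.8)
The plan is to prove the two independence assertions in sequence, establishing first that $\{w_1,\ldots,w_{\eta-1},H(m)\}$ are linearly independent and then deducing the independence of $\{v_1,\ldots,v_{\eta-1}\}$ from it by means of the defining relation $v_i=\alpha_i H(m)-w_i$ supplied by Theorem \ref{th:condcont}.

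For the integer vectors I would proceed as follows. Since $w_1,\ldots,w_{\eta-1}$ and $H(m)$ all lie in $\ZZ^{d}$, any $\RR$-linear dependence among them is equivalent to a $\QQ$-linear one, and hence, after clearing denominators, to an integer relation $\sum_{i=1}^{\eta-1} n_i w_i + n_0 H(m)=0$ with the $n_i$ not all zero. Pairing this identity with $\mu(m)$ and invoking $\langle w_i,\mu(m)\rangle =\alpha_i$ from \eqref{alphai} together with the normalization $\langle H(m),\mu(m)\rangle =1$ gives $\sum_{i} n_i \alpha_i + n_0 = 0$. This is a nontrivial integer relation among $\{1,\alpha_1,\ldots,\alpha_{\eta-1}\}$, contradicting their rational independence; therefore the $w_i$ together with $H(m)$ are $\RR$-linearly independent. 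For the $v_i$, I would then suppose $\sum_i c_i v_i=0$ with real $c_i$, substitute $v_i=\alpha_i H(m)-w_i$ to obtain $\sum_i c_i w_i-\bigl(\sum_i c_i\alpha_i\bigr)H(m)=0$, and conclude from the first part that every coefficient, in particular each $c_i$, vanishes.

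The main obstacle, though a mild one, is the passage from real to rational coefficients in the first step: pairing a real dependence with $\mu(m)$ yields only the single scalar equation $\sum_i c_i\alpha_i+c_0=0$, which by itself does not force the coefficients to vanish. The reduction is legitimate precisely because $w_i$ and $H(m)$ have integer entries, so the rank of the associated matrix is the same over $\QQ$ as over $\RR$; once this is in hand, the rational independence of $\{1,\alpha_1,\ldots,\alpha_{\eta-1}\}$ and the relation $v_i=\alpha_i H(m)-w_i$ deliver both conclusions at once.
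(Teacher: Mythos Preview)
Your proof is correct and follows essentially the same approach as the paper: reduce to a rational (hence integer) dependence among the integer vectors $w_1,\ldots,w_{\eta-1},H(m)$, pair with $\mu(m)$ using $\langle w_i,\mu(m)\rangle=\alpha_i$ and $\langle H(m),\mu(m)\rangle=1$, and contradict the rational independence of $\{1,\alpha_1,\ldots,\alpha_{\eta-1}\}$; the independence of the $v_i$ then follows from $v_i=\alpha_i H(m)-w_i$ exactly as you wrote. The only cosmetic difference is that the paper first shows $w_1,\ldots,w_{\eta-1}$ are independent and then that $H(m)$ is not in their span, whereas you handle all $\eta$ vectors in one stroke.
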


\begin{proof} Suppose
$\sum_{i=1}^{\eta-1} \delta_i w_i=0$ with some $\delta_i \neq 0$.
Since $w_1,\ldots,w_{\eta-1}$ are integer vectors we can assume
the $\delta_i$'s are integer numbers. From $\alpha_i=\langle w_i, \mu(m)
\rangle $ for all $1\leq i \leq \eta-1$ one gets $\sum_{i=1}^{\eta-1}
\delta_i \alpha_i=0$ with some $\delta_i \neq 0$. But
$\alpha_1,\ldots,\alpha_{\eta-1}$ are rationally independent, then
coefficients $\delta_i$'s must be $0$, a contradiction. Then
$w_1,\ldots,w_{\eta-1}$ are linearly independent.

Now, it holds that $H(m)\notin \langle \{w_1,\ldots,w_{\eta-1}\}\rangle $.
Indeed, if $H(m)=\sum_{j=1}^{\eta-1} q_j w_j$, with rational
coefficients, then by taking the inner product with $\mu(m)$ one
gets that $1=\sum_{j=1}^{\eta-1} q_j \alpha_j$. This contradicts
the fact that $1,\alpha_1,\ldots,\alpha_{\eta-1}$ are rationally
independent. One concludes $w_1,\ldots,w_{\eta-1},H(m)$ are
linearly independent.

Therefore, from $\sum_{j=1}^{\eta-1} \lambda_j v_j=0$ one deduces
$(\sum_{j=1}^{\eta-1} \lambda_j \alpha_j) H(m) -
\sum_{j=1}^{\eta-1} \lambda_j w_j=0$ and thus
$\lambda_1=\ldots=\lambda_{\eta-1}=0$.
\end{proof}

Fix an ergodic measure $\mu$ and, for each $n\geq 1$, define $\zeta(\mu,n)$ to be the maximal number
of rationally independent components of $\mu(n)$.

\begin{prop}
\label{muvsalpha} For all $n\geq m$, $\zeta(\mu,n) \geq \eta$. In
particular, if the system is of maximal type, then
$\zeta(\mu,n)=d$ for $n\geq m$.
\end{prop}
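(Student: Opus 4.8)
The plan is to reinterpret $\zeta(\mu,n)$ as the dimension over $\QQ$ of the subspace of $\RR$ (viewed as a $\QQ$-vector space) spanned by the components $\mu_1(n),\ldots,\mu_d(n)$ of $\mu(n)$; indeed, the maximal number of rationally independent elements among a finite set of reals equals the $\QQ$-dimension of their span. With this reformulation it suffices to exhibit $\eta$ rationally independent reals lying in $\mathrm{span}_\QQ\{\mu_1(m),\ldots,\mu_d(m)\}$, and the natural candidates are the generators $1,\alpha_1,\ldots,\alpha_{\eta-1}$ themselves.

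First I would record the two families of linear relations already available at level $m$. On one hand $\langle H(m),\mu(m)\rangle=1$ (the towers of $\P(m)$ partition $X$), and $H(m)$ is an integer vector; on the other hand, from \eqref{alphai} we have $\alpha_i=\langle w_i,\mu(m)\rangle$ with $w_i\in\ZZ^d$ for each $1\le i\le \eta-1$. Reading these as $\QQ$-linear combinations of the components $\mu_k(m)$ with integer (hence rational) coefficients shows that $1,\alpha_1,\ldots,\alpha_{\eta-1}$ all belong to $\mathrm{span}_\QQ\{\mu_1(m),\ldots,\mu_d(m)\}$. Since $\{1,\alpha_1,\ldots,\alpha_{\eta-1}\}$ is by construction a set of $\eta$ rationally independent reals, the enclosing $\QQ$-span has dimension at least $\eta$, whence $\zeta(\mu,m)\ge\eta$.

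It then remains to promote this from level $m$ to every $n\ge m$. Here I would iterate the fundamental relation $\mu(k)=M^T(k+1)\mu(k+1)$ to obtain $\mu(m)=P(n,m)^T\mu(n)$; because $P(n,m)$ has nonnegative integer entries, each $\mu_k(m)$ is an integer combination of the $\mu_j(n)$, so $\mathrm{span}_\QQ\{\mu_k(m)\}\subseteq\mathrm{span}_\QQ\{\mu_j(n)\}$ and therefore $\zeta(\mu,n)\ge\zeta(\mu,m)\ge\eta$. For the final assertion, if the system is of maximal type then $\eta=d$, while trivially $\zeta(\mu,n)\le d$ since $\mu(n)$ has only $d$ components; hence $\zeta(\mu,n)=d$ for $n\ge m$.

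The argument is essentially bookkeeping once the relations $\alpha_i=\langle w_i,\mu(m)\rangle$ and $\langle H(m),\mu(m)\rangle=1$ are in hand, so I do not expect a serious obstacle. The one point that requires care — and the only place where the hypothesis $n\ge m$ is genuinely used — is the level-shift step: one must invoke the integrality of the incidence matrices to guarantee that passing to a deeper level can only enlarge the rational span of the coordinates of the invariant measure, never shrink it.
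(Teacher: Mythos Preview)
Your proof is correct and takes a cleaner, more conceptual route than the paper. You identify $\zeta(\mu,n)$ with $\dim_{\QQ}\mathrm{span}_{\QQ}\{\mu_1(n),\ldots,\mu_d(n)\}$ and observe that the relations $1=\langle H(m),\mu(m)\rangle$ and $\alpha_i=\langle w_i,\mu(m)\rangle$ (with integer vectors $H(m),w_i$) place the $\eta$ rationally independent reals $1,\alpha_1,\ldots,\alpha_{\eta-1}$ inside that span; the inequality follows immediately. The paper instead argues by contradiction at the level of coordinates: it shows that any nonzero $q\in\QQ^d$ with $\langle q,\mu(m)\rangle=0$ cannot lie in $\mathcal W=\langle H(m),w_1,\ldots,w_{\eta-1}\rangle$, then assumes every $\eta$-subset $J\subseteq\{1,\ldots,d\}$ supports such a $q^J$, extracts $d-\eta+1$ linearly independent ones from the sliding family $J_i=\{i,\ldots,i+\eta-1\}$, and obtains $d+1$ independent vectors in $\RR^d$. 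Your argument is shorter and makes the linear-algebraic content transparent; the paper's argument is more explicit in that it actually locates a concrete size-$\eta$ subset $J$ of rationally independent components. Your handling of the passage from $m$ to general $n\ge m$ via $\mu(m)=P(n,m)^T\mu(n)$ and the integrality of $P(n,m)$ is also more informative than the paper's ``analogous'' remark, since it shows directly that $\zeta(\mu,n)$ is nondecreasing in $n$.
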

\begin{proof}
We give a proof for $n=m$, for a general $n$ it is analogous. Let
$q=(q_1,\ldots,q_d)^T \in \mathbb{Q}^d$ be such that
$\langle q,\mu(m)\rangle =0$. Thus $q$ is not contained in the linear space
$\mathcal{W}$. 
Indeed, if $q=\sum_{i=1}^{\eta-1} r_i w_i + r H(m)$
with $r$ and the $r_i$'s rational numbers, then, taking the inner-product with
$\mu(m)$, one obtains $0=\sum_{i=1}^{\eta-1} r_i \alpha_i + r$
which implies $r_1=\ldots=r_{\eta-1}=r=0$ (recall
$1,\alpha_1,\ldots, \alpha_{\eta-1}$ are rationally independent).

Assume for all subsets $J$ of $\{1,\ldots,d\}$ with cardinality
$\eta$ there is a non zero rational vector $q^J \in \mathbb{Q}^d$
with $q_j^J=0$ for $j\in \{1,\ldots,d\} \setminus J$ such that
$\langle q^J,\mu(m)\rangle =0$. At least $d-\eta+1$ of such vectors must be
linearly independent. To prove this fact consider the family
$J_i=\{i,\ldots,i+\eta-1\}$ for $i\in \{1,\ldots,d-\eta+1\}$ and
the corresponding vectors $q^{J_1},\ldots,q^{J_{d-\eta+1}}$. From
the first part of the proof one concludes that
$H(m),w_1,\ldots,w_{\eta-1},q^{J_1},\ldots,q^{J_{d-\eta+1}}$ are
$d+1$ independent vectors in $\mathbb{R}^d$, which is a
contradiction. Therefore, there is $J \subseteq \{1,\ldots,d\}$
with cardinality $\eta$ such that $\mu_j(m)$, $j\in J$, are
rationally independent components of $\mu(m)$. This gives
$\zeta(\mu,n) \geq \eta$. The maximal type case follows directly.
\end{proof}

Observe that the inequality in the proposition can be strict.

\subsection{Dimension group and geometric interpretation of eigenvalues}

Observe that it is not enough to have $v\in \RR^d$ with
$P(n,m)v\to 0$ as $n\to \infty$ and $w \in \ZZ^d$ such that
$v+w=\alpha H(m)$ for some $m \geq 1$ to ensure
$\exp(2i\pi\alpha)$ is a continuous eigenvalue of $(X,T)$. In
addition, from Proposition \ref{condcont}, it is also necessary
that the series $\sum_{n\geq m} \langle s_n(x),P(n,m)v\rangle $ converges modulo
$\ZZ$. In the next two propositions we try to give a more precise
statement involving the so called dimension group associated to
the sequence of matrices $(M(n); n\geq 1)$.

Fix $m\geq 1$ and an invariant measure $\mu$. 
Put 

$$
\mathcal{V}(m)
= \langle \{\mu(m)\}\rangle ^{\perp} .
$$

 Let $\mathcal{V}^s(m)$ be the subspace of
$\RR^{d}$ that is asymptotically contracted by $(M(n); n\geq m)$ :

$$
\mathcal{V}^s(m) = \{ v \in \RR^{d} ; P(n,m) v \to 0 \text{ as } n \to \infty\}.
$$

Also distinguish the subspaces of $\mathcal{V}^s(m)$,
$$\mathcal{V}_0(m) = \left\{ v \in \RR^{d} ; \exists n \geq m, P(n,m) v = 0 \right\} =
\bigcup_{n\geq m} \hbox{Ker}(P(n,m))
$$
and
$$\mathcal{V}_1(m) = \left\{ v \in \RR^{d} ; \sum_{n\geq m} \Vert P(n,m) v \Vert <
\infty \right\} \ . $$
Obviously,
$$\mathcal{V}_0(m) \subseteq \mathcal{V}_1(m)  \subseteq \mathcal{V}^s(m) \subseteq \mathcal{V}(m) \ .$$
One has $P(m)\mathcal{V}(1) \subseteq \mathcal{V}(m)$. Equality
holds if the matrices $(M(n); n\geq 1)$ are invertible.

\begin{prop}\label{prop:dimengroup}
There exist $m \geq 1$ and a linear space $\mathcal{V}^{(m)}
\subseteq \mathcal{V}(m)$ such that $P(n,m) : \mathcal{V}^{(m)} \to \mathcal{V}(n) $ is one to one
 for any $n > m$.
\end{prop}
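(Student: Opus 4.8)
The plan is to realize $\mathcal{V}^{(m)}$ as a linear complement, inside $\mathcal{V}(m)$, of the subspace $\mathcal{V}_0(m)=\bigcup_{n\geq m}\mathrm{Ker}(P(n,m))$ of vectors that are eventually annihilated by the products $P(n,m)$. In fact any level $m$ works with this recipe---so one could simply take $m=1$---and the role of the freedom in $m$ is only to let $\dim \mathcal{V}^{(m)}$ attain the largest (stable) value the construction permits; this is the point relevant to the dimension group, since taking $\mathcal{V}^{(m)}=\{0\}$ would satisfy the statement vacuously.

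First I record two elementary facts. The map $P(n,m)$ sends $\mathcal{V}(m)$ into $\mathcal{V}(n)$: iterating the fundamental relation $\mu(k)=M^T(k+1)\mu(k+1)$ gives $\mu(m)=P^T(n,m)\mu(n)$, so for $v\in\mathcal{V}(m)$ one has
$$
\langle P(n,m)v,\mu(n)\rangle=\langle v,P^T(n,m)\mu(n)\rangle=\langle v,\mu(m)\rangle=0,
$$
i.e.\ $P(n,m)v\in\mathcal{V}(n)$. Secondly, since $P(n+1,m)=M(n+1)P(n,m)$, the kernels $\mathrm{Ker}(P(n,m))$ form an increasing chain of subspaces of $\RR^{d}$; being an increasing union of subspaces of a finite-dimensional space, $\mathcal{V}_0(m)$ is itself a subspace and stabilizes, $\mathcal{V}_0(m)=\mathrm{Ker}(P(N,m))$ for some $N$. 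By the inclusion chain $\mathcal{V}_0(m)\subseteq\mathcal{V}_1(m)\subseteq\mathcal{V}^s(m)\subseteq\mathcal{V}(m)$ quoted before the statement, $\mathcal{V}_0(m)\subseteq\mathcal{V}(m)$.

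Now I would choose a linear complement $\mathcal{V}^{(m)}$ of $\mathcal{V}_0(m)$ inside $\mathcal{V}(m)$, so that $\mathcal{V}(m)=\mathcal{V}_0(m)\oplus\mathcal{V}^{(m)}$. By the first fact $P(n,m)$ maps $\mathcal{V}^{(m)}$ into $\mathcal{V}(n)$, and it does so injectively: if $v\in\mathcal{V}^{(m)}$ satisfies $P(n,m)v=0$, then $v\in\mathrm{Ker}(P(n,m))\subseteq\mathcal{V}_0(m)$, whence $v\in\mathcal{V}_0(m)\cap\mathcal{V}^{(m)}=\{0\}$. As this reasoning is valid for every $n>m$, the restriction $P(n,m)\colon\mathcal{V}^{(m)}\to\mathcal{V}(n)$ is one to one for all $n>m$, which is the assertion.

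The only point that genuinely requires care is that injectivity must hold simultaneously for all $n$, not merely in the limit; this is exactly what the monotonicity of the kernels provides, because passing to the union $\mathcal{V}_0(m)$ and deleting it via a complement suppresses every individual kernel at once. A dimension count then shows $\dim\mathcal{V}^{(m)}=\dim\mathcal{V}(m)-\dim\mathcal{V}_0(m)$ (note $\mu(m)\neq0$, so $\dim\mathcal{V}(m)=d-1$), and that this is the maximal dimension of any subspace of $\mathcal{V}(m)$ on which all the $P(n,m)$ are injective; choosing $m$ large makes this stable value explicit.
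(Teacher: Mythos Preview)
Your argument is correct: a linear complement of $\mathcal{V}_0(m)$ inside $\mathcal{V}(m)$ meets each $\mathrm{Ker}(P(n,m))$ trivially, so every $P(n,m)$ is injective on it, and this works for any $m\geq 1$. You also rightly observe that, read literally, the statement is close to vacuous.

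The paper takes a different and more elaborate route because it is really after something stronger than what is stated. It constructs the $\mathcal{V}^{(m)}$ \emph{recursively} as complements of $\mathcal{V}_0(m)$ inside the stable subspace $\mathcal{V}^s(m)$ (not in all of $\mathcal{V}(m)$), choosing them compatibly so that $M(m+1)\mathcal{V}^{(m)}\subseteq\mathcal{V}^{(m+1)}$; the dimensions then form a bounded nondecreasing sequence, and from some $m$ on $P(n,m):\mathcal{V}^{(m)}\to\mathcal{V}^{(n)}$ is a \emph{bijection}. That extra structure is exactly what the next proposition exploits: a vector $v'\in\mathcal{V}^s(m')$ is split as $v'_0+v'_s$ with $v'_0\in\mathcal{V}_0(m')$ and $v'_s\in\mathcal{V}^{(m')}$, and $v'_s$ is pulled back to a preimage $v\in\mathcal{V}^{(m)}$. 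Your larger space, of dimension $(d-1)-\dim\mathcal{V}_0(m)$ rather than $\dim\mathcal{V}^s(m)-\dim\mathcal{V}_0(m)$, gives a quicker proof of the proposition as written but would not support that application, since it need not lie in $\mathcal{V}^s(m)$ and no surjectivity onto $\mathcal{V}^{(n)}$ is established.
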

\begin{proof}
Let us choose a subspace $\mathcal{V}^{(1)}$ of $\RR^{d}$ such
that $\mathcal{V}^s(1) = \mathcal{V}^{(1)} \oplus
\mathcal{V}_0(1)$. Let $m \geq 1$ and assume subspaces
$\mathcal{V}^{(n)}$ are defined for all $1 \le n \le m$ such that:
$\mathcal{V}^s{(n)} = \mathcal{V}_0{(n)} \oplus \mathcal{V}^{(n)}$
and $P(n,k) \mathcal{V}^{(k)} \subseteq \mathcal{V}^{(n)}$ for all
$1\leq k < n \leq m$.

Choose a subspace $\mathcal{W}^{(m+1)}$ of $\mathcal{V}^s{(m+1)}$
such that $\mathcal{V}^s{(m+1)}= \mathcal{V}_0{(m+1)} \oplus
M(m+1) \mathcal{V}^{(m)}\oplus \mathcal{W}^{(m+1)}$ and set
$\mathcal{V}^{(m+1)} = M(n+1) \mathcal{V}^{(m)}\oplus
\mathcal{W}^{(m+1)}$. This procedure defines recursively a
sequence of subspaces verifying for all $m\geq 2$ and all $1 \leq
k \leq m$, $\mathcal{V}^s{(m)} = \mathcal{V}_0{(m)} \oplus
\mathcal{V}^{(m)}$ and $P(m+1,k) \mathcal{V}^{(k)} \subseteq
\mathcal{V}^{(m+1)}$.

Since $P(n,m)\mathcal{V}^{(m)} \subseteq \mathcal{V}^{(n)}$ and in
view of the definition of $\mathcal{V}_0{(m)}$, $P(n, m)$ is
injective on $\mathcal{V}^{(m)}$, then the sequence
$(\hbox{dim}(\mathcal{V}^{(m)});n\geq 1)$ is increasing. Since it
is bounded by $d$, there is $m\in \NN$ such that for all $n \geq
m$, $P(n, m)\mathcal{V}^{(m)} = \mathcal{V}^{(n)}$. This concludes
the proof.
\end{proof}

Fix the integer $m$ found in the previous proposition. Notice that if
the matrices $(M(n);n\geq 1)$ are invertible, then one can take $m
= 1$.

Consider the discrete subgroup of $\RR^{d}$
$$
\mathcal{G}(m) = \bigcup_{n\geq m} P(n,m)^{-1} \ZZ^{d} = \left\{z
\in \QQ^{d} ;  \exists n \geq m, P(n,m)z \in \ZZ^{d} \right\}
$$
and the one dimensional subspace $\Delta(m) = \{ t H(m) ; t \in
\RR \} \subseteq \RR^{d}$.

\begin{prop}
\label{group-geo} Let $\lambda=\exp(2i\pi \alpha)$. If $\lambda$
is a continuous eigenvalue of $(X,T)$ then $\alpha H(m) \in
(\mathcal{G}(m) + \mathcal{V}_1{(m)}) \cap \Delta(m) $.
\end{prop}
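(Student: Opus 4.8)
The plan is to combine the two necessary conditions already proved. Theorem \ref{th:condcont} supplies a decomposition $\alpha H(m) = v + w$ of $\alpha H(m)$ into an asymptotically contracted part $v$ and an integer part $w$, while Theorem \ref{teo:nec-cont} upgrades ``asymptotically contracted'' to ``summably contracted''. The target membership $\alpha H(m) \in (\mathcal{G}(m) + \mathcal{V}_1(m)) \cap \Delta(m)$ splits naturally: the part $\alpha H(m) \in \Delta(m)$ is immediate, since $\alpha H(m)$ is by definition the scalar multiple $\alpha\cdot H(m)$, so with $t=\alpha$ it lies in $\Delta(m) = \{tH(m); t\in\RR\}$. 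The only real content is $\alpha H(m) \in \mathcal{G}(m) + \mathcal{V}_1(m)$.

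First I would invoke Theorem \ref{th:condcont}. It produces some level $m'$, a real vector $v$ and an integer vector $w$ with $\alpha H(m') = v + w$ and $P(n,m')v \to 0$, i.e. $v \in \mathcal{V}^s(m')$. For any level $m'' \geq m'$ one may replace $(v,w)$ by $(P(m'',m')v,\, P(m'',m')w)$: the second stays integral and the first stays asymptotically contracted, since $P(n,m'')P(m'',m')v = P(n,m')v \to 0$. Hence the decomposition persists at every sufficiently large level. As both the defining property of the level $m$ fixed in Proposition \ref{prop:dimengroup} and the conclusion of Theorem \ref{th:condcont} are stable under enlarging the level, I may assume the fixed $m$ is large enough that $\alpha H(m) = v + w$ holds with $w \in \ZZ^d$ and $v \in \mathcal{V}^s(m)$.

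The key step is to strengthen $v \in \mathcal{V}^s(m)$ to $v \in \mathcal{V}_1(m)$, and this is exactly where Theorem \ref{teo:nec-cont} enters. For $n \geq m$ put $v(n) = P(n,m)v$ and $w(n) = P(n,m)w \in \ZZ^d$, so that $\alpha H(n) = P(n,m)\alpha H(m) = v(n) + w(n)$. Since $\Vert v(n)\Vert = \Vert P(n,m)v\Vert \to 0$, for all large $n$ we have $\Vert v(n)\Vert < 1/2$; because distinct integer vectors differ by at least $1$ in the sup norm, $w(n)$ is then the integer vector nearest to $\alpha H(n)$, whence $\vvert \alpha H(n)\vvert = \Vert v(n)\Vert = \Vert P(n,m)v\Vert$. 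Theorem \ref{teo:nec-cont} gives $\sum_{n\geq 1}\vvert \alpha H(n)\vvert < \infty$ (recall $P(n)H(1)=H(n)$), so the tail $\sum_{n}\Vert P(n,m)v\Vert$ converges, and adding the finitely many initial terms keeps it finite; thus $v \in \mathcal{V}_1(m)$.

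To conclude I would note that $w \in \ZZ^d \subseteq \mathcal{G}(m)$, taking $n=m$ in the definition of $\mathcal{G}(m)$ since $P(m,m)=I$. Therefore $\alpha H(m) = v + w \in \mathcal{V}_1(m) + \mathcal{G}(m)$, and together with $\alpha H(m)\in\Delta(m)$ this gives the claim. The main obstacle is the upgrade step: one must check that the summable quantity $\vvert\alpha H(n)\vvert$ furnished by Theorem \ref{teo:nec-cont} genuinely equals $\Vert P(n,m)v\Vert$, which hinges on $w(n)$ being the nearest integer vector to $\alpha H(n)$ — valid precisely once the contraction has pushed $\Vert v(n)\Vert$ below $1/2$.
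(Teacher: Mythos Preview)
Your upgrade from $\mathcal{V}^s(m)$ to $\mathcal{V}_1(m)$ via Theorem~\ref{teo:nec-cont} is correct and matches the paper's argument. The gap lies earlier, in the sentence ``I may assume the fixed $m$ is large enough''. The level $m$ in the statement is fixed once and for all \emph{before} $\alpha$ is given: it is the $m$ produced by Proposition~\ref{prop:dimengroup}, and $\mathcal{G}(m)$, $\mathcal{V}_1(m)$, $\Delta(m)$ all refer to that single level. Theorem~\ref{th:condcont}, by contrast, hands you a level $m'$ that depends on $\alpha$ and may well exceed $m$. Enlarging $m$ to absorb $m'$ makes $m$ depend on $\alpha$ and changes the statement; nor is it clear that $\alpha H(m'')\in\mathcal{G}(m'')+\mathcal{V}_1(m'')$ for some $m''>m$ implies the same membership at level $m$, since $P(m'',m)$ need not be invertible.

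The paper resolves this by going in the opposite direction. Taking $m'\geq m$, one writes $\alpha H(m')=v'+w'$ with $v'\in\mathcal{V}^s(m')$ and $w'\in\ZZ^d$, splits $v'=v'_0+v'_s$ with $v'_0\in\mathcal{V}_0(m')$ and $v'_s\in\mathcal{V}^{(m')}$, and then uses the construction in Proposition~\ref{prop:dimengroup} --- which gives $P(m',m)\mathcal{V}^{(m)}=\mathcal{V}^{(m')}$ --- to \emph{pull $v'_s$ back} to some $v\in\mathcal{V}^{(m)}$. Choosing $n$ large enough that $P(n,m')v'_0=0$ yields $P(n,m)(\alpha H(m)-v)=P(n,m')w'\in\ZZ^d$, so $\alpha H(m)-v\in\mathcal{G}(m)$; your summability argument then shows $v\in\mathcal{V}_1(m)$. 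In short, you pushed forward when you needed to pull back, and Proposition~\ref{prop:dimengroup} is precisely the tool that makes that pull-back possible.
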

\begin{proof}
According to Theorem \ref{teo:nec-cont},
$$\displaystyle \sum_{n\ge 1} \vvert \alpha P(n)H(1) \vvert
<\infty \ .$$
From Lemma \ref{ortho}, there exist $m' \geq 1$, an
integer vector $w' \in \ZZ^{d}$ and a real vector $v' \in \RR^{d}$
with
$$
\alpha H(m')= v' + w' \text{ and } \Vert P(n,m')v'\Vert\to 0\text{
as }n\to \infty\ .
$$
One can assume $m' \geq m$.

Since $v' \in \mathcal{V}^s{(m')}$, it splits into $v' = v'_0 +
v'_s$, with $v'_0 \in \mathcal{V}_0{(m')}$ and $v'_s \in
\mathcal{V}^{(m')}$. There is $v \in \mathcal{V}^{(m)}$ such that
$v'_s = P(m',m)v$. Hence,
$$\alpha P(m',m)H(m) =w' + v'_0 + P(m',m)v. $$
Let $n$ be such that $P(n,m')v'_0 = 0$. One has,
$$\alpha P(n,m)H(m) =P(n,m') w' + 0  + P(n,m)v. $$
One deduces that
$$P(n,m) \left(\alpha H(m) - v\right)  \in \ZZ^{d}, $$
which  means that $\alpha H(m) - v \in
\mathcal{G}(m)+\hbox{Ker}(P(n,m))$. To conclude, notice that,
since for $n$ large enough, $\vvert \alpha P(n)H(1) \vvert =
\vvert P(n,m) v \vvert = \Vert  P(n,m) v \Vert $, $v$ must belong
to $\mathcal{V}_1{(m)}$, so that
$$\alpha H(m) \in (\mathcal{V}_1{(m)} + \mathcal{G}(m))\cap \Delta(m).$$
\end{proof}

\begin{remark}
$\mathcal{G}(m)$ is associated to the so called dimension group.
It is classically presented as a quotient
$\mathcal{G}'=\mathcal{H} / \sim $, where
$$
\mathcal{H}= \left\{(z,p) \in \QQ^{d}\times \NN ; \exists n \geq
p, P(n, p)z \in \ZZ^{d} \right\}
$$
and
$$(z,p) \sim (y,q) \Leftrightarrow \exists n \geq p, n\geq  q, P(n,p)z = P(n,q)y. $$

If the matrices $(M(n);n\geq 1)$ are invertible each $g \in
\mathcal{G}'$ can be represented by the unique element $(z,1) \in
\mathcal{H}$ in class $g$. In the general case, some elements of
$\mathcal{G}'$ do not have a representative of this type.
Nevertheless, by previous propositions one can choose an
appropriate $m \geq 1$ and identify each $g \in \mathcal{G}'$ with
a representative of the form $(z,m) \in \mathcal{H}$ if it is not
asymptotically null, and with $(0,m)$ if it is asymptotically
null. The difference here is that two elements of $\mathcal{G}(m)$
may correspond to the same element of $\mathcal{G}'$ if their
images coincide after a while. One has that $\mathcal{G}'$ is
isomorphic to $\mathcal{G}(m)/\approx$ with $z\approx y
\Leftrightarrow \exists n \geq m, P(n,m)z = P(n,m)y$.
\end{remark}

Fix $m\geq 1$ as before and such that
$1,\alpha_1,\ldots,\alpha_{\eta-1}$ is a base of rationally
independent continuous eigenvalues of $(X,T)$ with $\alpha_i
H(m)=v_i + w_i$, $v_i \in \mathcal{V}^s(m)$ and $w_i \in \ZZ^d$.
When $\zeta(\mu,m)=d$, the eigenvalues can be described from
$\mathcal{W}=\langle \{w_1,\ldots,w_{\eta-1},H(m)\}\rangle $.

\begin{prop}
Assume $\zeta(\mu,m)=d$ (in particular if $(X,T)$ is of maximal
type). 
Consider $\alpha=q+\sum_{i=1}^{\eta-1} q_i \alpha_i \in
E(X,T)$ with $q,q_1,\ldots,q_{\eta-1} \in \QQ$. 
Then $q
H(m)+\sum_{i=1}^{\eta-1} q_i w_i $ belongs to $ \mathcal{G}(m)$.
Moreover, if $\alpha$ is a rational continuous eigenvalue then $\alpha H(m)$
belongs to $\mathcal{G}(m)$. 
Conversely, if $q,q_1,\ldots,q_{\eta-1} \in \QQ$
are such that $q H(m)+\sum_{i=1}^{\eta-1} q_i w_i \in
\mathcal{G}(m)$ then $\alpha=q+\sum_{i=1}^{\eta-1} q_i \alpha_i
\in E(X,T)$.
\end{prop}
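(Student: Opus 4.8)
The plan is to treat the two implications separately, built around the decomposition $\alpha H(m)=V+W$ with $V=\sum_{i=1}^{\eta-1}q_iv_i$ and $W=qH(m)+\sum_{i=1}^{\eta-1}q_iw_i$, and to reduce every statement to the behaviour of the scalar sums $\langle s_k(x),P(k,N)V\rangle$ controlled by Proposition \ref{condcont}. The first observation, used throughout, is that each $\alpha_i$ being a continuous eigenvalue forces $v_i\in\mathcal{V}_1(m)$: applying Theorem \ref{teo:nec-cont} to $\alpha_i$ and using $P(n,m)w_i\in\ZZ^d$ gives $\vvert\alpha_iP(n)H(1)\vvert=\Vert P(n,m)v_i\Vert$ for large $n$, whence $\sum_n\Vert P(n,m)v_i\Vert<\infty$; since $\mathcal{V}_1(m)$ is a linear subspace, $V\in\mathcal{V}_1(m)$.

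For the direct implication, since $\alpha\in E(X,T)$, Proposition \ref{group-geo} yields $g\in\mathcal{G}(m)$ and $v\in\mathcal{V}_1(m)$ with $\alpha H(m)=g+v$; comparing with $\alpha H(m)=V+W$ gives $W-g=v-V\in\mathcal{V}_1(m)\cap\QQ^{d}$. By Lemma \ref{ortho}, $\mathcal{V}_1(m)\subseteq\langle\{\mu(m)\}\rangle^{\perp}$, so $\langle W-g,\mu(m)\rangle=0$; as $W-g$ is rational and $\zeta(\mu,m)=d$ means the components of $\mu(m)$ are rationally independent, $W-g=0$, i.e. $W\in\mathcal{G}(m)$. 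The statement for a rational eigenvalue is then immediate: rational independence of $1,\alpha_1,\dots,\alpha_{\eta-1}$ forces all $q_i=0$ and $q=\alpha$, so $W=\alpha H(m)\in\mathcal{G}(m)$.

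For the converse I would verify the criterion of Proposition \ref{condcont} directly. As $W\in\mathcal{G}(m)$, choose $N\geq m$ with $\tilde W:=P(N,m)W\in\ZZ^{d}$ and set $\tilde V:=P(N,m)V\in\mathcal{V}^{s}(N)$, so $\alpha H(N)=\tilde V+\tilde W$. Expanding $r_n(x)$ via \eqref{e:formulareturn} and splitting at level $N$, the integrality of $\tilde W$ annihilates all terms $\langle s_k(x),P(k,N)\tilde W\rangle\in\ZZ$, so modulo $\ZZ$ it remains to prove that $\Sigma_n(x)=\sum_{k=N}^{n-1}\langle s_k(x),P(k,N)\tilde V\rangle$ converges uniformly. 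Two ingredients combine: first, for a common denominator $s$ of $q,q_1,\dots,q_{\eta-1}$ the number $s\alpha$ is an \emph{integer} combination of $1,\alpha_1,\dots,\alpha_{\eta-1}$, hence lies in $E(X,T)$, and since $\Sigma_n\equiv\alpha(r_n-r_N)\bmod\ZZ$ this makes $s\Sigma_n$ converge uniformly modulo $\ZZ$, i.e. $\Sigma_n$ converges uniformly modulo $\tfrac1s\ZZ$; second, the increments $\langle s_n(x),P(n,N)\tilde V\rangle$ tend to $0$ uniformly. Granting the second point, an ``eventually constant integer part'' argument at scale $\tfrac1s$ (the $\tfrac1s\ZZ$-class of $\Sigma_n$ can no longer jump once the increment drops below $\tfrac1{2s}$) upgrades convergence modulo $\tfrac1s\ZZ$ to genuine uniform convergence of $\Sigma_n$, and Proposition \ref{condcont} gives $\alpha\in E(X,T)$.

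The heart of the matter, and the step I expect to be the main obstacle, is the uniform smallness $\langle s_n(x),P(n,N)\tilde V\rangle\to0$ in $x$; this is strictly stronger than $\Vert P(n,N)\tilde V\Vert\to0$, because $\Vert s_n(x)\Vert$ may grow. I would obtain it by the sweeping argument of Theorem \ref{th:condcont}. Fix a tower of $\P(k+1)$ over a vertex $\kappa$ and let $g_t=\langle s_k(T^{-j_t}x),P(k,N)\tilde V\rangle$ run over the successive level-$k$ crossings: by \eqref{pas} consecutive $g_t$ differ by a single entry of $P(k,N)\tilde V$, hence by at most $\Vert P(k,N)\tilde V\Vert\to0$, while by \eqref{mn} the last crossing has $s_k$ within a canonical vector of $e_\kappa^{T}M(k+1)$, so its value is essentially $(P(k+1,N)\tilde V)_\kappa$, which also tends to $0$. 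Moreover $s\langle s_k(\cdot),P(k,N)\tilde V\rangle\equiv s\alpha\langle s_k(\cdot),H(k)\rangle\bmod\ZZ$ because $s\tilde W\in\ZZ^{d}$, and $s\alpha\in E(X,T)$ forces $\vvert s\alpha\langle s_k(y),H(k)\rangle\vvert\to0$ uniformly in $y$ by Proposition \ref{condcont}; thus every $g_t$ sits within $o(1)$ of $\tfrac1s\ZZ$. Since the $g_t$ move in steps below $\tfrac1{2s}$, stay within $o(1)$ of $\tfrac1s\ZZ$, and terminate near $0$, they must all be $o(1)$; as $\langle s_k(x),P(k,N)\tilde V\rangle$ depends only on $x_{k+1}$ it coincides with one of these $g_t$, yielding the required uniform bound. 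Verifying carefully that these crossings sweep from the value at $x$ down to a near-zero value, uniformly across all towers, is where the argument will need the most care.
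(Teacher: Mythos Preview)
Your direct implication is exactly the paper's argument: compare the decomposition $\alpha H(m)=V+W$ with the one from Proposition~\ref{group-geo}, use Lemma~\ref{ortho} to get $\langle W-g,\mu(m)\rangle=0$, and invoke $\zeta(\mu,m)=d$ to force $W=g\in\mathcal{G}(m)$. The rational case is also handled the same way.

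For the converse you take a genuinely different and considerably longer route. The paper dispatches it in one line: with $v=\sum_i q_i v_i$, the series $\sum_{n\geq m}\langle s_n(x),P(n,m)v\rangle$ converges uniformly modulo $\ZZ$ \emph{because the corresponding series with $v_i$ in place of $v$ does}. The implicit point is that each $v_i$-series actually converges uniformly \emph{in $\RR$}: since $w_i\in\ZZ^d$, one has $\langle s_n(x),P(n,m)v_i\rangle\equiv\alpha_i\langle s_n(x),H(n)\rangle\pmod\ZZ$, so the terms tend to $0$ modulo $\ZZ$; together with $\Vert P(n,m)v_i\Vert\to0$, the very sweeping argument of Theorem~\ref{th:condcont} (run with threshold $\varepsilon$ instead of $1/4$) gives $|\langle s_n(x),P(n,m)v_i\rangle|\to0$ uniformly. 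Uniform mod-$\ZZ$ convergence plus terms tending to $0$ forces convergence in $\RR$, and then any rational linear combination converges in $\RR$ as well, hence modulo $\ZZ$. This avoids your detour through $s\alpha\in E(X,T)$, the intermediate $(1/s)\ZZ$-convergence, and the sweep applied directly to $\tilde V$.

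Your approach is correct, and your main concern (that $\langle s_n(x),P(n,N)\tilde V\rangle\to0$ uniformly needs a real argument because $\Vert s_n(x)\Vert$ may grow) is well placed; it is precisely the point the paper slides over. But you would save effort by doing the sweep once for each $v_i$ (where $w_i\in\ZZ^d$ from the outset, so no level $N$ or scale $1/s$ is needed) and then simply taking the rational combination in $\RR$.
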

\begin{proof}
Take $\alpha \in E(X,T)$ as in the statement of the proposition. 
By Proposition \ref{group-geo},
there are $v' \in \mathcal{V}^s(m)$ and $w' \in \mathcal{G}(m)$
such that $\alpha H(m)=v'+w'$. Thus, $v-v'=w'-w$, where
$v=\sum_{i=1}^{\eta-1} q_i v_i$ and $w= q H(n)+\sum_{i=1}^{\eta-1}
q_i w_i$. From $\langle v-v',\mu(m)\rangle =0$ one deduces that
$\langle w-w',\mu(m)\rangle =0$. But $\zeta(\mu,m)=d$, thus $w=w'$ and
consequently $v=v'$. This proves the first result. If $\alpha \in
\QQ$ then $v=0$, which proves the second result.

Now consider $q,q_1,\ldots,q_{\eta-1} \in \QQ$ such that $q
H(m)+\sum_{i=1}^{\eta-1} q_i w_i \in \mathcal{G}(m)$. Put
$v=\sum_{i=1}^{\eta-1} q_i v_i$. The series $\sum_{n\geq m}
\langle s_n(x),P(n,m)v\rangle $ converges uniformly in $x$ modulo $\ZZ$, because
the corresponding series with $v_i$ instead of $v$ does. This
proves $\alpha=q+\sum_{i=1}^{\eta-1} q_i \alpha_i$ belongs to $ E(X,T)$.
\end{proof}

Define the matrix $W=[w_1,\ldots,w_{\eta-1},H(m)]$. 
From \eqref{alphai}, it is direct
that $$W^T \mu(m)=(\alpha_1,\ldots,\alpha_{\eta-1},1)^T \ .$$

\begin{coro}
If $\zeta(\mu,m)=d$ (in particular if $(X,T)$ is of maximal type)
then $E(X,T)$ is isomorphic (as a group) with the discrete
subgroup of $\QQ^d$, $\QQ(X,T)=\{ z \in \QQ^d ; W^Tz \in
\mathcal{G}(m) \}$.
\end{coro}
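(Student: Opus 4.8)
The plan is to realize $E(X,T)$ inside $\QQ^{d}$ through the pairing with $\mu(m)$, for which the identity $W^{T}\mu(m)=(\alpha_1,\ldots,\alpha_{\eta-1},1)^{T}$ is tailor-made, and then to match the resulting subgroup with $\QQ(X,T)$ via $W^{T}$. First, each $\alpha\in E(X,T)$ is written uniquely as $\alpha=q+\sum_{i=1}^{\eta-1}q_i\alpha_i$ with $q,q_i\in\QQ$, the uniqueness being exactly the rational independence of $1,\alpha_1,\ldots,\alpha_{\eta-1}$. To such an $\alpha$ I associate $w(\alpha)=qH(m)+\sum_{i=1}^{\eta-1}q_iw_i$. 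Pairing with $\mu(m)$ and using $\langle H(m),\mu(m)\rangle=1$ together with \eqref{alphai} gives $\langle w(\alpha),\mu(m)\rangle=q+\sum_{i}q_i\alpha_i=\alpha$. The preceding proposition states precisely that $\alpha\in E(X,T)$ if and only if $w(\alpha)\in\mathcal{G}(m)$, so $\alpha\mapsto w(\alpha)$ carries $E(X,T)$ into $\mathcal{G}(m)$.

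Next I would exploit the hypothesis $\zeta(\mu,m)=d$, which says the components of $\mu(m)$ are rationally independent, so that the map $z\mapsto\langle z,\mu(m)\rangle$ is injective on $\QQ^{d}$. Consequently $w(\alpha)$ is the \emph{unique} vector of $\QQ^{d}$ pairing with $\mu(m)$ to give $\alpha$, and $\alpha\mapsto w(\alpha)$ is injective; it is plainly additive. For surjectivity onto $\mathcal{G}(m)$ I would run the argument backwards: given $g\in\mathcal{G}(m)$, expand $g$ in the basis $\{w_1,\ldots,w_{\eta-1},H(m)\}$ of $\QQ^{d}$ (a basis in the maximal type case, by Proposition \ref{linearspaces}) to read off rationals $q,q_i$ with $g=w(\alpha)$ for $\alpha=q+\sum_i q_i\alpha_i$, and invoke the preceding proposition to conclude $\alpha\in E(X,T)$. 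Thus $\alpha\mapsto w(\alpha)$ is a group isomorphism of $E(X,T)$ onto $\mathcal{G}(m)$.

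Finally I would transport this isomorphism to $\QQ(X,T)$ through $W^{T}$. In the maximal type situation $W$ is square and, by Proposition \ref{linearspaces}, invertible over $\QQ$, so $W^{T}$ is a linear automorphism of $\QQ^{d}$; by definition $\QQ(X,T)=\{z\in\QQ^{d};W^{T}z\in\mathcal{G}(m)\}=(W^{T})^{-1}\mathcal{G}(m)$, whence $W^{T}$ restricts to a group isomorphism $\QQ(X,T)\to\mathcal{G}(m)$. Composing with the isomorphism of the previous paragraph, $\alpha\mapsto(W^{T})^{-1}w(\alpha)$ is the desired isomorphism $E(X,T)\to\QQ(X,T)$: indeed $W^{T}\bigl((W^{T})^{-1}w(\alpha)\bigr)=w(\alpha)\in\mathcal{G}(m)$ exactly when $\alpha\in E(X,T)$. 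Discreteness of $\QQ(X,T)$ then follows since it is the image of the discrete group $\mathcal{G}(m)$ under the linear automorphism $(W^{T})^{-1}$.

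The main obstacle I anticipate is not analytic but bookkeeping about ranks: every identification above rests on $\{w_1,\ldots,w_{\eta-1},H(m)\}$ being a basis and on $z\mapsto\langle z,\mu(m)\rangle$ being injective, that is, on the joint use of Proposition \ref{linearspaces} and of $\zeta(\mu,m)=d$. One must in particular be careful that the passage through $W^{T}$ is a clean automorphism only when $\eta=d$, so that $W^{T}$ is square; the surjectivity step, where one reconstructs an eigenvalue from a prescribed element of $\mathcal{G}(m)$ and certifies it through the preceding proposition, is precisely where this invertibility is needed, and it is the step I would check most carefully.
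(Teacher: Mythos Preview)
Your proposal is correct and is precisely the argument the paper has in mind: the corollary is stated there without proof, as an immediate consequence of the preceding proposition together with the identity $W^{T}\mu(m)=(\alpha_1,\ldots,\alpha_{\eta-1},1)^{T}$, and your three steps (bijection $\alpha\leftrightarrow w(\alpha)$ via that proposition, identification of the image, transport through $W^{T}$) are exactly how one unpacks this.

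Your caution about needing $\eta=d$ is well placed and worth stating plainly rather than as a worry. The very definition $\QQ(X,T)=\{z\in\QQ^{d};W^{T}z\in\mathcal{G}(m)\}$ only makes sense when $W^{T}$ is $d\times d$, i.e.\ when $\eta=d$; and your surjectivity step onto $\mathcal{G}(m)$ genuinely fails for $\eta<d$, since then the image of $\alpha\mapsto w(\alpha)$ is only $\mathcal{G}(m)\cap\langle w_1,\ldots,w_{\eta-1},H(m)\rangle_{\QQ}$. So the parenthetical ``in particular if $(X,T)$ is of maximal type'' is effectively the working hypothesis, and you should simply say so. One small point to drop: $\mathcal{G}(m)$ is in general \emph{not} discrete in $\RR^{d}$ (take $M(n)=2I$, giving $\mathcal{G}(m)=\ZZ[1/2]^{d}$), so your final sentence deducing discreteness of $\QQ(X,T)$ from that of $\mathcal{G}(m)$ should be removed; the paper's use of ``discrete'' here is informal.
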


\section{Measurable eigenvalues of finite rank systems: a general
necessary condition}\label{section5}

Let $(X,T)$ be a Cantor minimal system of finite rank $d$ and
$\mu$ a $T$-ergodic measure. By contracting the associated
Bratteli-Vershik diagram one can always assume there exists $I
\subseteq \{ 1, \ldots , d \}$ such that:
\begin{enumerate}
\item
For all $k\in I$, $\liminf_{n\to \infty} \mu\{\tau_n=k\} > 0$;
\item
For all $k\in I^c$, $\sum_{n\geq 1} \mu \{ \tau_n=k \} < \infty$.
\end{enumerate}
A diagram verifying these properties will be called {\it clean}.
From conditions (1) and (2) one deduces that $\tau_n (x)$ belongs
to $I$ from some $n$ for almost all $x\in X$.

Consider a measurable eigenfunction $f:X \to \CC$ of $(X,T,\mu)$
associated to the eigenvalue $\lambda = \exp (2i\pi \alpha )$ with
$|f|=1$, $\mu$-almost surely. For $n\geq 1$ put
$$
f_n = \EE ( f |\T_n ) = \sum_{k=1}^d \sum_{j=0}^{h_k (n)-1} {\bf
1}_{T^{-j} B_k(n)}\frac{1}{\mu_k(n)} \int_{B_k (n)} \lambda^{-j} f
d\mu
$$
and set
$$
\frac{1}{\mu_k (n)} \int_{B_k (n)}  f d\mu = c_k(n)
\lambda^{\rho_k (n)}  ,
$$
with $c_k (n) \geq 0$. If $x$ belongs to the $k$-th tower of level
$n$ one has that $f_n (x) = \lambda^{-r_n (x) + \rho_k(n)}
c_k(n)$. A simple computation yields to the following interesting
relation that we will not exploit in this article: $$c_l(n)
\mu_l(n) \leq \sum_{k=1}^d M_{k,l}(n+1) \mu_k(n+1) c_k(n+1) \, $$
for all $l \in \{1,\ldots,d\}$.

\begin{lemma}
\label{lem:cnk} For any $1 \leq k \leq d$ such that $\liminf_{n\to
\infty} \mu\{\tau_n=k\} > 0$ one has $c_k(n) \to 1$ as $n\to
\infty$.
\end{lemma}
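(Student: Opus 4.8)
The plan is to show that $c_k(n) \to 1$ by exploiting that $f$ has modulus $1$ almost surely, combined with the martingale convergence of the conditional expectations $f_n = \EE(f|\T_n)$. Since $f_n \to f$ in $L^2$ (and almost surely along the filtration $(\T_n)$, as the $\sigma$-algebras $\T_n$ generate $\B(X)$), and since $|f| = 1$ a.s., I expect $|f_n| \to 1$ in an appropriate sense on the relevant towers. The key observation is that on the $k$-th tower of level $n$, the function $f_n$ has constant modulus equal to $c_k(n)$, because $f_n(x) = \lambda^{-r_n(x) + \rho_k(n)} c_k(n)$ and $|\lambda| = 1$. So $c_k(n)$ is precisely the modulus of the martingale approximation on the $k$-th tower.

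First I would record that $c_k(n) \leq 1$ by Jensen's inequality: from the definition, $c_k(n) = \bigl|\frac{1}{\mu_k(n)}\int_{B_k(n)} \lambda^{-j}\cdots\bigr|$ is (up to the phase bookkeeping) the modulus of an average of unit-modulus values, hence at most $1$. More precisely, $c_k(n)\mu_k(n) = |\int_{B_k(n)} f\, d\mu| \leq \int_{B_k(n)} |f|\, d\mu = \mu_k(n)$. So the content is the reverse inequality, i.e. that $c_k(n)$ cannot stay bounded away from $1$ infinitely often when $\liminf_n \mu\{\tau_n = k\} > 0$. The natural tool is the $L^2$-convergence $\|f_n - f\|_2 \to 0$, which gives $\|f_n\|_2 \to \|f\|_2 = 1$. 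Writing out $\|f_n\|_2^2 = \sum_{k=1}^d c_k(n)^2\, \mu\{\tau_n = k\} = \sum_k c_k(n)^2 h_k(n)\mu_k(n)$ (since $f_n$ has constant modulus $c_k(n)$ on each tower, whose measure is $\mu\{\tau_n=k\}$), I get $\sum_k c_k(n)^2\,\mu\{\tau_n=k\} \to 1$, while $\sum_k \mu\{\tau_n=k\} = 1$. Combining these, $\sum_k (1 - c_k(n)^2)\,\mu\{\tau_n=k\} \to 0$, and each summand is nonnegative because $c_k(n) \leq 1$.

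From $\sum_k (1 - c_k(n)^2)\,\mu\{\tau_n=k\} \to 0$ I would conclude, for any fixed $k$ with $\liminf_n \mu\{\tau_n=k\} > 0$, that $(1 - c_k(n)^2)\,\mu\{\tau_n=k\} \to 0$ (each term of the finite nonnegative sum tends to $0$), and since $\mu\{\tau_n=k\}$ stays bounded below along a subsequence where it realizes the $\liminf$ being positive means it is bounded below for all large $n$ — more carefully, $\liminf_n \mu\{\tau_n=k\} > 0$ forces $\inf_n \mu\{\tau_n=k\} > 0$ up to finitely many exceptions, so I can divide and get $1 - c_k(n)^2 \to 0$, i.e. $c_k(n) \to 1$. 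The one delicate point is whether $L^2$-convergence $\|f_n\|_2 \to 1$ is legitimate: this requires that $\T_n$ increase to the full Borel $\sigma$-algebra $\B(X)$, which holds here because the KR partitions $\P(n)$ refine and generate the topology (the cylinder sets $[e_1,\ldots,e_n]$ form a basis), so $f_n = \EE(f|\T_n) \to f$ in $L^2$ by the martingale convergence theorem.

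The main obstacle I anticipate is the passage from the aggregate limit $\sum_k (1-c_k(n)^2)\mu\{\tau_n=k\} \to 0$ to the individual limit $c_k(n)\to 1$ for the specified $k$: it relies on the positivity of $\liminf_n \mu\{\tau_n=k\}$ to prevent the vanishing of the weight $\mu\{\tau_n=k\}$ from masking a non-vanishing defect $1-c_k(n)^2$. I would phrase this as: if $c_k(n)\not\to 1$, there is $\epsilon>0$ and a subsequence $(n_j)$ with $1-c_k(n_j)^2 \geq \epsilon$; along this subsequence $\mu\{\tau_{n_j}=k\}$ is bounded below by some $\delta>0$ (possibly after passing to a further subsequence, using $\liminf > 0$), giving $(1-c_k(n_j)^2)\mu\{\tau_{n_j}=k\} \geq \epsilon\delta > 0$, contradicting convergence to $0$. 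This completes the argument.
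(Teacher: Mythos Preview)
Your proof is correct and follows exactly the same route as the paper: compute $\|f_n\|_2^2 = \sum_{k=1}^d c_k(n)^2\,\mu\{\tau_n=k\}$, use martingale convergence to get $\|f_n\|_2^2 \to 1$, and combine $c_k(n)\leq 1$ with $\liminf_n \mu\{\tau_n=k\}>0$ to force $c_k(n)\to 1$. The paper's proof is essentially a two-line compression of what you wrote.
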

\begin{proof}
By construction, $||f_n||_2^2=\sum_{k=1}^d \mu\{\tau_n=k\}
c_k(n)^2 \to 1$ as $n\to \infty$. Since the $c_k(n)$ are bounded
by one, then $c_k(n) \to 1$ as $n\to \infty$ for each $1 \leq k
\leq d$ such that $\liminf_{n\to \infty} \mu\{\tau_n=k\} > 0$.
\end{proof}

For $n \geq 1$ and $k,l \in \{1 , \ldots , d \}$ define
$S_n(l,k)=\{s_n(x); x\in X, \tau_n(x)=l,\tau_{n+1}(x)=k\}$.

\begin{prop}
Let $(X,T)$ be a Cantor minimal system of finite rank $d$ and
$\mu$ a $T$-ergodic measure. Assume $(X,T)$ is given by a clean
Bratteli-Vershik representation. If $\lambda=\exp(2i\pi\alpha)$ is
an eigenvalue of $(X,T,\mu)$, then for $n\geq 1$ there exist real
numbers $\rho_1(n), \ldots ,\rho_d(n)$ such that the following
series converges,
\begin{align}
\label{necessary-condition} \sum_{n\geq 1} \max_{(l,k)\in J}
\frac{1}{M_{k,l}(n+1)} \sum_{s\in S_{n}(l,k)}
 | 1  - \lambda^{\langle s, H(n)\rangle -\rho_k(n+1) +\rho_l(n) }|^2
\end{align}
where $J  = \{ (l,k) \in \{1,\ldots, d\}^2; \liminf_{n\to \infty}
\mu \{ \tau_n=l, \tau_{n+1}=k\} >0 \}$.
\end{prop}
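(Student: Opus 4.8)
The driving force is the $L^2$-martingale $(f_n)_{n\ge 1}$, $f_n=\EE(f\mid\T_n)$. Since $|f|=1$ almost surely we have $\|f\|_2=1$, so $(f_n)$ is $L^2$-bounded, $f_n\to f$ in $L^2$, and the increments $f_{n+1}-f_n$ are pairwise orthogonal; hence $\sum_{n\ge 1}\|f_{n+1}-f_n\|_2^2=\|f\|_2^2-\|f_1\|_2^2<\infty$. The whole plan is to bound the $n$-th term of \eqref{necessary-condition} by a fixed multiple of $\|f_{n+1}-f_n\|_2^2$ for all large $n$, taking for $\rho_1(n),\dots,\rho_d(n)$ exactly the phases introduced before Lemma \ref{lem:cnk}.

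First I would compute $\|f_{n+1}-f_n\|_2^2$ exactly by slicing. Each level-$(n+1)$ tower of type $k$ splits into level-$n$ sub-towers, one for each edge of $E_{n+1}$ from a vertex $l$ to $k$; on such a sub-tower both $s_n$ and the increment $r_{n+1}-r_n=\langle s_n,H(n)\rangle$ (from \eqref{e:formulareturn}) are constant, and the suffix value $s=s_n$ runs exactly over $S_n(l,k)$, giving the bijection $|S_n(l,k)|=M_{k,l}(n+1)$. Using $f_n=\lambda^{-r_n+\rho_l(n)}c_l(n)$ and $f_{n+1}=\lambda^{-r_{n+1}+\rho_k(n+1)}c_k(n+1)$, the modulus $|f_{n+1}-f_n|$ is constant on each sub-tower and equals $|c_k(n+1)\lambda^{\theta}-c_l(n)|$ with $\theta=-(\langle s,H(n)\rangle-\rho_k(n+1)+\rho_l(n))$. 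Since a sub-tower has $h_l(n)$ floors each of measure $\mu_k(n+1)$, I obtain the exact identity $\|f_{n+1}-f_n\|_2^2=\sum_{k,l}\sum_{s\in S_n(l,k)}h_l(n)\mu_k(n+1)\,|c_k(n+1)\lambda^{\theta}-c_l(n)|^2$.

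Next comes the elementary but decisive algebra: writing $c_k=c_k(n+1)$, $c_l=c_l(n)$, one has $|c_k\lambda^\theta-c_l|^2=(c_k-c_l)^2+c_kc_l\,|1-\lambda^\theta|^2$, so $|1-\lambda^\theta|^2\le (c_kc_l)^{-1}|c_k\lambda^\theta-c_l|^2$; note also $|1-\lambda^\theta|=|1-\lambda^{-\theta}|$, so $-\theta$ reproduces the exponent in \eqref{necessary-condition}. For $(l,k)\in J$ the joint marginal has positive $\liminf$, hence so do the two marginals, and Lemma \ref{lem:cnk} forces $c_l(n),c_k(n+1)\to 1$ and thus $c_k(n+1)c_l(n)\ge\tfrac12$ for $n$ large. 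Discarding all pairs but one in the exact identity yields $\sum_{s\in S_n(l,k)}|c_k(n+1)\lambda^\theta-c_l(n)|^2\le (h_l(n)\mu_k(n+1))^{-1}\|f_{n+1}-f_n\|_2^2$.

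The step I expect to carry the argument is recognizing that the normalizing constants collapse: $M_{k,l}(n+1)h_l(n)\mu_k(n+1)=\mu\{\tau_n=l,\tau_{n+1}=k\}$, which is bounded below by some $c_0>0$ for $n$ large precisely because $(l,k)\in J$. Combining the three estimates gives, for large $n$ and every $(l,k)\in J$, $\frac{1}{M_{k,l}(n+1)}\sum_{s\in S_n(l,k)}|1-\lambda^\theta|^2\le \frac{2}{\mu\{\tau_n=l,\tau_{n+1}=k\}}\|f_{n+1}-f_n\|_2^2\le \frac{2}{c_0}\|f_{n+1}-f_n\|_2^2$. Taking the maximum over the finite set $J$ preserves this bound, and summing over $n$ against $\sum_n\|f_{n+1}-f_n\|_2^2<\infty$ yields convergence of \eqref{necessary-condition}. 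The only real subtlety is the bookkeeping in the slicing step (that $s_n$ and $r_{n+1}-r_n$ are constant on sub-towers, that the suffixes exhaust $S_n(l,k)$, and that floors have measure $\mu_k(n+1)$); everything else is the martingale bound together with the pointwise identity.
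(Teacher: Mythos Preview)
Your argument is correct and follows essentially the same route as the paper: both compute $\|f_{n+1}-f_n\|_2^2$ by slicing the level-$(n+1)$ towers into level-$n$ sub-towers, use $M_{k,l}(n+1)h_l(n)\mu_k(n+1)=\mu\{\tau_n=l,\tau_{n+1}=k\}$, and invoke Lemma~\ref{lem:cnk} on the indices in $J$. Your exact identity $|c_k\lambda^\theta-c_l|^2=(c_k-c_l)^2+c_kc_l\,|1-\lambda^\theta|^2$ is a mild streamlining of the paper's two-step detour through $|c_k/c_l-\lambda^\theta|\ge|c_k/c_l-1|$ and the triangle inequality, but the strategy is the same.
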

\begin{proof}

Let $I$ be a subset of $\{ 1, \ldots , d \}$ verifying (1) and
(2) in the definition of a clean Bratteli-Vershik representation.

Let $f:X \to S^1$ be an eigenfunction for the eigenvalue $\lambda
= \exp (2i\pi \alpha )$. 
As above, for $n\geq 1$ and $k\in
\{1,\ldots,d\}$, we set $f_n= \EE ( f |\T_n )$ and $\frac{1}{\mu_k (n)}
\int_{B_k (n)}  f d\mu = c_k(n) \lambda^{\rho_k (n)}$ with $c_k(n)
\geq 0$. From Lemma \ref{lem:cnk}, $c_k(n) \to 1$ as $n\to \infty$
if $k\in I$. Let $l,k \in J$. Observe that $l,k \in I$ too. One
has

\begin{align}
\label{eq:measurebound} 
\frac{b}{M_{k,l}(n+1)} & \leq \frac{ \mu \{
\tau_{n}=l, \tau_{n+1}=k \} }{M_{k,l}(n+1)}  = \frac{M_{k,l}(n+1)
h_l(n) \mu_k(n+1) }{M_{k,l}(n+1)} \\
& = h_l(n) \mu_k(n+1) \ ,
\end{align}

for some $b>0$.
On the other hand, since the sequence $(f_n; n \geq 1)$ is a
martingale, then $\sum_{n\geq 1} ||f_{n+1} - f_n||_2^2$ converges
and

\begin{align*}
&||f_{n+1} - f_n||_2^2 \\
&= \int_X |f_{n+1} - f_n|^2 d\mu \\
& = \int_X \left|c_{\tau_{n+1}(x)}(n+1) \lambda^{-r_{n+1}(x) +
 \rho_{\tau_{n+1}(x)}(n+1)} - c_{\tau_{n}(x)}(n)
\lambda^{-r_{n} (x) + \rho_{\tau_n(x)}(n)}  \right|^2 d\mu \\
&= \int_X c_{\tau_n(x)}(n)\cdot\left |
\frac{c_{\tau_{n+1}(x)}(n+1)}{c_{\tau_{n}(x)}(n)} -
 \lambda^{r_{n+1} (x)-r_{n} (x)- \rho_{\tau_{n+1}(x)} + \rho_{\tau_{n}(x)} }    \right|^2 d\mu
\\
& = \sum_{k=1}^{d} \sum_{l=1}^d h_l(n) \mu_k(n+1) \sum_{s\in
S_{n}(l,k)} c_{l}(n)\cdot \left | \frac{c_k(n+1)}{c_l(n)}  -
\lambda^{\langle s, H(n) \rangle  - \rho_{k}(n+1) + \rho_{l} (n)}
 \right|^2 \\
\end{align*}

Consequently, from the convergence of $c_l(n)$ to $1$ as $n\to
\infty$ for $l\in I$ and \eqref{eq:measurebound} one deduces,

$$ \sum_{n\geq 1} \sum_{(l,k) \in J} \frac{1}{M_{k,l}(n+1) } \sum_{s\in
S_{n}(l,k)}  \left | \frac{c_k(n+1)}{c_l(n)}  - \lambda^{\langle s, H(n)
\rangle  - \rho_{k}(n+1) + \rho_{l} (n)}
 \right|^2
$$
converges. But $| \frac{c_k(n+1)}{c_l(n)}  - \lambda^{\langle s, H(n) \rangle  -
\rho_{k}(n+1) + \rho_{l} (n)}| \geq |\frac{c_k(n+1)}{c_l(n)} -1|$,
then one also gets that
$$ \sum_{n\geq 1} \sum_{(l,k) \in J}  \left| \frac{c_k(n+1)}{c_l(n)} - 1 \right|^2
$$
converges. One concludes that
$$
\sum_{n\geq 1} \sum_{(l,k) \in J} \frac{1}{M_{k,l}(n+1) }
\sum_{s\in S_{n}(l,k)}
 |  1  - \lambda^{\langle s, H(n)\rangle -\rho_k(n+1) +\rho_l(n)}    |^2  .
$$
converges, which gives the result.
\end{proof}

Remark from last theorem that $\frac{1}{M_{k,l}(n+1) } \sum_{s\in
S_{n}(l,k)} \lambda^{\langle s, H(n)\rangle -\rho_k(n+1) +\rho_l(n)}$ converges
to $1$ as $n\to \infty$ for any $(l,k) \in J$. This suggests a
strong condition on the distribution of powers of $\lambda$ in
$S^1$ in relation to the local ordering of the Bratteli-Vershik
representation.

\section{Example 1: measurable eigenvalues do not always come from the stable space}
\label{example}

In Section \ref{sec:conti} we proved that if
$\lambda=\exp(2i\pi\alpha)$ is a continuous eigenvalue of a 
minimal Cantor system $(X,T)$ given by a Bratteli-Vershik representation
$B=\left( V,E,\preceq \right)$, then for some $m \geq 1$ there
exist $v\in \RR^{C(m)}$ with $P(n,m)v\to 0$ as $n\to \infty$ and
$w\in \ZZ^{C(m)}$ such that $\alpha H(m)= v + w$. In this section
we construct a uniquely ergodic Cantor minimal system of finite
rank $2$ and a measurable eigenvalue $\lambda$ for which this
property is not verified. In particular $\lambda$ will not be a
continuous eigenvalue.

We start by constructing a suitable sequence of matrices
$(M(n);n\geq 1)$. Let $A = \left(
\begin{array}{cc} 1&1\\1&0
\end{array}\right).$ 
Let $\varphi = \frac{1+\sqrt{5}}{2}$ be the Perron eigenvalue of
$A$, $e_u$ an associated eigenvector with positive coordinates and $e_s$ an eigenvector 
of the other eigenvalue $\varphi^{-1}$ such that $\langle e_s,(0,1)^T\rangle >0$ .

As usual $H(1)=(1 \ 1)^T$ and for $n\geq 2$ the matrix
$M(n)=A^{k_n}$ for some integer $k_n \geq 2$ to be defined. Recall
$P(1)=I$ and $P(n) = M(n) \cdots M(2)$ for $n\geq 2$ and $P(n,m) =
M(n) \cdots M(m+1)$ for $1 \leq m \leq n$. We set $K_n =
\sum_{i=2}^n k_i$, so $P(n) = A^{K_n}$. For convenience we set $k_1=K_1=0$.

In $\RR^2$ we distinguish the stable subspace $E^s=\{v\in \RR^2 ; A^n
v\to_{n\to \infty} 0\}$ and the unstable subspace $E^u= \{v\in \RR^2 ;
||A^n v|| \to_{n\to \infty} \infty \}$ vector spaces of $A$.
The vectors $e_u$ and $e_s$ belong respectively to the unstable and the stable spaces.
Moreover,  $\{e_u,e_s\}$ is an orthonormal basis of $\RR^2$.

\begin{lemma}
\label{varstablefaible} Let $(\epsilon_n; n\geq 1)$ and
$(\delta_n; n\geq 1)$ be sequences of real numbers in
$]0,\varphi^{-1}]$ and $v_1 \in ]0 ,  \epsilon_1 [$. There exist a real
number $0<\beta<1$ and a sequence $(k_n; n \geq 1)$ of integers
larger than $2$ such that for all $v>v_1$ and all $n \geq 1$
$$A^{K_n} (\beta e_u + v e_s) = z_n + u_n e_u  + (v_n +
\varphi^{-K_{n}}(v-v_1)) e_s $$ with  $0<v_n < \epsilon_n$, $0<u_n
\leq  \delta_n v_n$ and $z_n \in \ZZ^2$. \end{lemma}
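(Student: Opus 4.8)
The plan is to work throughout in the orthonormal eigenbasis $\{e_u,e_s\}$, in which $A$ is diagonal, acting as multiplication by $\varphi$ on $e_u$ and by the second eigenvalue (of modulus $\varphi^{-1}$) on $e_s$. I will take every $k_n$ to be an even integer, so that $A^{K_n}$ acts on $E^s$ as multiplication by $+\varphi^{-K_n}$, removing any sign ambiguity. Then for any $w=\beta e_u+v e_s$ one has $A^{K_n}w=\beta\varphi^{K_n}e_u+v\varphi^{-K_n}e_s$, and writing $v\varphi^{-K_n}=v_1\varphi^{-K_n}+\varphi^{-K_n}(v-v_1)$ shows that the term $\varphi^{-K_n}(v-v_1)e_s$ in the statement splits off automatically and is independent of all other choices. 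Hence it suffices to treat the single vector $w=\beta e_u+v_1 e_s$ and to produce $\beta\in(0,1)$, even integers $k_n\ge 2$ and integer vectors $z_n$ (with $z_1=0$, $K_1=0$) such that, setting $u_n=\beta\varphi^{K_n}-\langle z_n,e_u\rangle$ and $v_n=v_1\varphi^{-K_n}-\langle z_n,e_s\rangle$, one has $A^{K_n}w=z_n+u_ne_u+v_ne_s$ with $0<v_n<\epsilon_n$ and $0<u_n\le\delta_n v_n$.

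The key structural observation is that $v_n$ depends only on $(K_n,z_n)$, while the unstable residual $u_n$ is the \emph{only} quantity depending on the free real parameter $\beta$; this decouples the two conditions. I would build the data by induction, maintaining a nested family of closed intervals $I_n$ for $\beta$ so that $\beta\in I_n$ forces validity at levels $\le n$. At the base level $K_1=0$, $z_1=0$, $v_1$ is the given value in $(0,\epsilon_1)$, $u_1=\beta$, and $I_1=(0,\delta_1 v_1]\subset(0,1)$. At the inductive step I first choose $z_n$ in the thin strip $\{z\in\ZZ^2:\langle z,e_s\rangle\in(v_1\varphi^{-K_n}-\epsilon_n,\,v_1\varphi^{-K_n})\}$, which guarantees $v_n\in(0,\epsilon_n)$ independently of $\beta$; then the unstable requirement $u_n\in(0,\delta_n v_n]$ is equivalent to $\beta$ lying in the interval $I_n=[\langle z_n,e_u\rangle\varphi^{-K_n},\,(\langle z_n,e_u\rangle+\delta_n v_n)\varphi^{-K_n})$ of width $\delta_n v_n\varphi^{-K_n}$.

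The heart of the argument is to show that $K_n$ (even, large) and $z_n$ can be chosen so that this tiny interval $I_n$ sits inside $I_{n-1}$. Lattice points of $\ZZ^2$ lying in an $e_s$-strip of width $\epsilon_n$ have $e_u$-coordinates with bounded gaps, of order $1/\epsilon_n$ (a consequence of the irrational slope of $e_s$ and of $A$ being unimodular, $\det A=-1$, so that $A$ preserves $\ZZ^2$ while expanding $e_u$ and contracting $e_s$). Thus the admissible left endpoints $\langle z_n,e_u\rangle\varphi^{-K_n}$ form a grid of spacing of order $(1/\epsilon_n)\varphi^{-K_n}$; choosing $k_n=K_n-K_{n-1}$ large enough that $\varphi^{k_n}>1/(\epsilon_n\delta_{n-1}v_{n-1})$ makes this grid finer than $|I_{n-1}|$, so some admissible $z_n$ places $I_n$ strictly inside $I_{n-1}$. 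This produces strictly nested intervals with $|I_n|\to 0$; I then set $\beta=\bigcap_n I_n$, the unique common point, which lies in the interior of every $I_n$ and hence yields $0<u_n<\delta_n v_n$ and $0<v_n<\epsilon_n$ at every level, with $0<\beta<1$.

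The step I expect to be the main obstacle is exactly this simultaneous control in the two eigendirections: one must force the unstable residual $u_n$ to be extraordinarily small (of order $\delta_n v_n\sim\varphi^{-K_n}$, strictly dominated by the stable residual $v_n$) while keeping $v_n$ in its prescribed window, even though no nonzero integer vector can be small in both coordinates. The escape is the decoupling together with the rescaling by $\varphi^{-K_n}$: although consecutive lattice points inside the $e_s$-strip are widely spaced in the $e_u$-direction, dividing by the huge factor $\varphi^{K_n}$ turns them into an arbitrarily fine grid of candidate values for $\beta$, and it is this that makes the nested-interval selection succeed. The remaining verifications (parity of the $K_n$, strict positivity of $u_n,v_n$ at the common point $\beta$, and the bound-gap statement for lattice points in a strip) are routine and I would relegate them to short remarks.
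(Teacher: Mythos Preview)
Your proposal is correct and follows essentially the same geometric idea as the paper's proof: exploit the irrational slope of the eigendirections to find integer lattice points whose $e_s$--coordinate falls in a prescribed window of width $\epsilon_n$, and then take $k_n$ large enough so that the induced constraint on $\beta$ (coming from the $e_u$--coordinate, rescaled by $\varphi^{-K_n}$) can be satisfied. The only difference is packaging: the paper builds $\beta$ as the limit of an explicit increasing Cauchy sequence $\alpha_n$, setting $\alpha_{n+1}=\alpha_n+\varphi^{-K_{n+1}}t_{n+1}$ where $t_{n+1}e_u=\bar z_{n+1}+s_{n+1}e_s$ is a lattice point close to the unstable line, and then defines $z_{n+1}=A^{k_{n+1}}z_n+\bar z_{n+1}$ recursively; you instead run a nested--interval scheme, selecting each $z_n$ directly. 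These are equivalent formulations (your interval endpoints play the role of the paper's $\alpha_n$), and your explicit decoupling of $v_n$ from $\beta$ makes the structure perhaps more transparent. Your care in taking the $k_n$ even, so that $A^{K_n}$ acts on $E^s$ by $+\varphi^{-K_n}$ rather than $(-\varphi^{-1})^{K_n}$, actually patches a sign issue the paper glosses over.
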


\begin{proof}
First we construct recursively the sequence $(k_n; n \geq 1)$ 
and a sequence $(\alpha_n; n\geq 1)$ such that for all $n\geq 1$

$$ 
z_n = A^{K_n} (\alpha_n e_u + v_1 e_s) -  v_n e_s \in \ZZ^2
$$

for some $0< v_n < \epsilon_n$. We start the recursion with
$\alpha_1 = 0$, $v_1>0$ and $z_1=0$ and put $t_1 = 1$.

Assume the construction is achieved up to $n\geq 1$. Let $k_{min}
\geq 2$ be large enough so that $\varphi^{-k_{min}} v_n
<\epsilon_{n+1}$ and $\varphi^{-k_{min}}<\epsilon_{n+1}$. 
The
direction given by $e_u$ has irrational slope. 
Thus there exist $t_{n+1}>0$,
$0<s_{n+1}<\epsilon_{n+1} - \varphi^{-k_{min}} v_n$ and $\bar
z_{n+1} \in \ZZ^2$ such that $t_{n+1} e_u = \bar z_{n+1} + s_{n+1}
e_s$. Choose $k_{n+1}
> k_{min}$ so that
$\varphi^{-k_{n+1}} t_{n+1}<\min(\frac{\varphi-1}{\varphi}
\delta_n v_n,\varphi^{-1} t_n)$, where $t_n$ is associated to
$z_n$ in previous step. Let
$$v_{n+1} = \varphi^{-k_{n+1}} v_n + s_{n+1} \  \hbox{ and } \ \alpha_{n+1}
= \alpha_n + \varphi^{-K_{n+1}} t_{n+1} \ .$$ One has
\begin{align*}
& A^{K_{n+1}} ( \alpha_{n+1} e_u +  v_1 e_s) - v_{n+1} e_s \\
&= (\varphi^{K_{n+1}} \alpha_n  +   t_{n+1} ) e_u +
(\varphi^{-K_{n+1}} v_1  -\varphi^{-k_{n+1}} v_{n}  - s_{n+1} ) e_s  \\
&= \varphi^{k_{n+1}}( \varphi^{K_{n}} \alpha_n ) e_u
+ \varphi^{-k_{n+1}} (\varphi^{-K_{n}} v_1  - v_{n} )e_s  +(t_{n+1}e_u -s_{n+1}  e_s)  \\
&= A^{k_{n+1}}[ \varphi^{K_{n}} \alpha_n  e_u + (\varphi^{-K_{n}} v_1  - v_{n} )e_s ] + (t_{n+1}e_u -s_{n+1}  e_s)  \\
&= A^{k_{n+1}}z_n + \bar z_{n+1}=z_{n+1} \in \ZZ^2,
\end{align*}
and $0<v_{n+1} = \varphi^{-k_{n+1}} v_n + s_{n+1} <
\epsilon_{n+1}$.

Let $m\geq 0$. It holds,
\begin{align*}
\alpha_{n+m}-\alpha_n&=\sum_{j=n}^{n+m-1}
(\alpha_{j+1}-\alpha_j)=\sum_{j=n}^{n+m-1}
\varphi^{-K_{j+1}}t_{j+1}
\\
&< \varphi^{-K_{n+1}} t_{n+1} \sum_{j=0}^{m-1} \varphi^{-j} <
\varphi^{-K_{n}} \delta_{n} v_{n} < \varphi^{-K_{n}} \delta_{n}
\epsilon_{n} \ .
\end{align*}

Then the sequence $(\alpha_n; n\geq 1)$ converges. 
Put $\beta =
\lim_{n\rightarrow\infty} \alpha_n$. 
It is clear that $0<\beta
<1$ and
$\varphi^{K_n}(\beta-\alpha_n) < \delta_{n} v_{n}$ for $n\geq 1$.
To conclude define $u_n = \varphi^{K_{n}} (\beta - \alpha_n)$ and
observe that for all $v>v_1$
$$ A^{K_n} (\beta e_u + v e_s) = z_n +  \varphi^{K_{n}}
(\beta - \alpha_n)  e_u  + (v_n + \varphi^{-K_{n}}(v-v_1)) e_s $$
where by construction $v_n < \epsilon_n$.
\end{proof}

In previous lemma we gave a procedure to construct one value of
$\beta$. In fact it is possible to construct a whole Cantor set of
such numbers associated to a same sequence $(k_n)_{n \geq 1}$. The
construction in the lemma can be modified as follows: at each step
one finds two different values $t_{n+1}>0$ and $t'_{n+1}>0$ with
$t'_{n+1}>t_{n+1}$ and then we choose $k_{n+1}$ large enough so
that conditions for both values are satisfied. Remark that these
conditions depend on all previous choices for
$t_2,t'_2,\ldots,t_n,t'_n$. Then we can set $\alpha_{n+1} =
\alpha_n + \varphi^{-K_{n+1}} t_{n+1}$ as well as $\alpha'_{n+1} =
\alpha_n + \varphi^{-K_{n+1}} t'_{n+1}$. Since this choice is free
at each step of the recurrence we can construct a Cantor set of
values for $\beta$. It is straightforward that all the values
obtained by this procedure are different. This argument shows that
not all  the possible values of $\beta$ comes from the so called
regular weak stable space, that is $\beta H(1) \notin \ZZ^2 +
E^s$, because the intersection of $\ZZ^2 + E^s$ with $E^u$ is
countable. One has proved,

\begin{prop}
\label{prop:cantorbeta} Let $(\epsilon_n; n\geq 1)$ and
$(\delta_n; n\geq 1)$ be sequences of real numbers in
$]0,\varphi^{-1}]$ and $0< v_1 < \epsilon_1$. There exist a real
number $0<\beta<1$ such that $\beta H(1) \notin \ZZ^2 + E^s$ and a
sequence $(k_n; n \geq 2)$ of integers larger than $2$ such that
for all $v>v_1$ and all $n \geq 2$
$$A^{K_n} (\beta e_u + v e_s) = z_n + u_n e_u  + (v_n +
\varphi^{-K_{n}}(v-v_1)) e_s $$ with  $0<v_n < \epsilon_n$, $0<u_n
\leq  \delta_n v_n$ and $z_n \in \ZZ^2$.
\end{prop}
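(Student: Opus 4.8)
The plan is to observe that Proposition~\ref{prop:cantorbeta} follows almost immediately from Lemma~\ref{varstablefaible} together with the Cantor set construction discussed in the intervening remark; the only genuinely new content beyond the lemma is the assertion that one can choose $\beta$ with $\beta H(1) \notin \ZZ^2 + E^s$. First I would apply Lemma~\ref{varstablefaible} with the given sequences $(\epsilon_n)$, $(\delta_n)$ and $v_1 \in \, ]0,\epsilon_1[$ to obtain, for each admissible choice made in the recursion, a value of $\beta$ together with a sequence $(k_n; n\geq 2)$ of integers larger than $2$ satisfying the displayed identity $A^{K_n}(\beta e_u + v e_s) = z_n + u_n e_u + (v_n + \varphi^{-K_n}(v-v_1)) e_s$ for all $v > v_1$ and all $n\geq 2$, with the stated bounds $0 < v_n < \epsilon_n$ and $0 < u_n \leq \delta_n v_n$ and $z_n \in \ZZ^2$. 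This directly yields every conclusion of the proposition except the non-regularity condition on $\beta$.

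To secure that condition I would invoke the branching modification of the recursion described in the remark preceding the proposition. At each step $n$ one exhibits two distinct admissible heights $t_{n+1} > 0$ and $t'_{n+1} > 0$ (both realizing the irrational-slope approximation $t e_u = \bar z + s e_s$ with the required smallness of $s$), and then selects a single $k_{n+1}$ large enough that the convergence and integrality constraints of the lemma hold simultaneously for \emph{both} choices; note that these constraints depend only on the earlier data $t_2,t'_2,\ldots,t_n,t'_n$, so a common $k_{n+1}$ can always be found. Each branch $\{t_{n+1}, t'_{n+1}\}$ produces a distinct increment $\varphi^{-K_{n+1}} t_{n+1}$ or $\varphi^{-K_{n+1}} t'_{n+1}$ added to $\alpha_n$, and since $t'_{n+1} > t_{n+1}$ these increments differ. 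Running the recursion over all binary choices produces a map from $\{0,1\}^{\NN}$ into the set of limiting values $\beta = \lim_n \alpha_n$, and the separation of increments at every level makes this map injective with image a Cantor set. Hence there is a continuum of admissible $\beta$'s sharing one fixed sequence $(k_n)$.

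Finally I would rule out that all these $\beta$ lie in the exceptional set. The regular weak stable set is $\ZZ^2 + E^s$, and the values $\beta$ enter the construction through $\beta H(1) = \beta\,(1,1)^T$, which one compares against $(\ZZ^2 + E^s)\cap E^u$. Because $E^u$ is a one-dimensional line and $\ZZ^2 + E^s$ is a countable union of translates of the line $E^s$ (transverse to $E^u$ since $\{e_u,e_s\}$ is a basis), the intersection $(\ZZ^2 + E^s)\cap E^u$ is countable. As the Cantor set of $\beta$-values is uncountable, at least one (in fact all but countably many) $\beta$ satisfies $\beta H(1) \notin \ZZ^2 + E^s$. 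Fixing such a $\beta$ gives the proposition.

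The main obstacle is ensuring that a single sequence $(k_n)$ can simultaneously accommodate the branching choices: one must verify that the smallness conditions of Lemma~\ref{varstablefaible} on $\varphi^{-k_{n+1}} t_{n+1}$ (namely the bound by $\min(\tfrac{\varphi-1}{\varphi}\delta_n v_n, \varphi^{-1} t_n)$) remain compatible when both $t_{n+1}$ and the larger $t'_{n+1}$ must be controlled by the same $k_{n+1}$. This is routine since enlarging $k_{n+1}$ only tightens all bounds, but it is the step that genuinely uses the flexibility in the choice of $k_n$ rather than merely quoting the lemma.
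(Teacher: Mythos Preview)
Your proposal is correct and follows essentially the same route as the paper: the paper's proof of Proposition~\ref{prop:cantorbeta} is precisely the paragraph preceding its statement, which modifies the recursion of Lemma~\ref{varstablefaible} by choosing two admissible values $t_{n+1}<t'_{n+1}$ at each step, takes $k_{n+1}$ large enough to handle both, obtains an uncountable (Cantor) set of limits $\beta$ for a single sequence $(k_n)$, and then invokes the countability of $(\ZZ^2+E^s)\cap E^u$ to select a $\beta$ with $\beta H(1)\notin \ZZ^2+E^s$. Your identification of the only delicate point---that one common $k_{n+1}$ must serve all earlier branches---matches the paper's remark that the constraints depend on all previous choices $t_2,t'_2,\ldots,t_n,t'_n$.
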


\begin{coro}
\label{valp} Let $(\epsilon_n; n\geq 1)$ and $(\delta_n; n\geq 1)$
be sequences of real numbers in $]0,\varphi^{-1}]$. There is a
sequence $(k_n;n\geq 2)$ of integers larger than $2$ and a real
number $\alpha>0$ such that for all $n\geq 2$
$$ \alpha P(n) H(1) = w_n + z_n \ ,$$
where $z_n \in \ZZ^2$ and $w_n\in \RR^2$ with $||w_n|| \leq 4
\epsilon_n$.
\end{coro}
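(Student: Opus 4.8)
The plan is to turn Proposition \ref{prop:cantorbeta}, which controls $A^{K_n}(\beta e_u+v e_s)$, into a statement about $\alpha P(n)H(1)=\alpha A^{K_n}H(1)$ through a single change of variable in the eigenbasis. First I would expand $H(1)=(1,1)^T$ in the orthonormal basis $\{e_u,e_s\}$, writing $H(1)=a\,e_u+b\,e_s$ with $a=\langle H(1),e_u\rangle$ and $b=\langle H(1),e_s\rangle$. Using $e_u\propto(\varphi,1)$ and $e_s\propto(-1,\varphi)$ (with the sign fixed by $\langle e_s,(0,1)^T\rangle>0$), both inner products are positive, and a short computation gives $b/a=(\varphi-1)/(\varphi+1)=\varphi^{-3}\in(0,1)$. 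I would then apply Proposition \ref{prop:cantorbeta} to the given sequences $(\epsilon_n)$, $(\delta_n)$ and to some $v_1\in(0,\epsilon_1)$, obtaining $\beta\in(0,1)$ and the integers $(k_n)_{n\ge2}$, and set $\alpha=\beta/a>0$.

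With this choice, $\alpha H(1)=\beta e_u+v\,e_s$ where $v=\alpha b=\beta\,\varphi^{-3}$, so by linearity $\alpha P(n)H(1)=A^{K_n}(\beta e_u+v e_s)$ for every $n$. Reading off the decomposition produced by the construction (valid for every $v$, as discussed below) gives, for all $n\ge2$,
$$\alpha P(n)H(1)=z_n+\underbrace{u_n e_u+\bigl(v_n+\varphi^{-K_n}(v-v_1)\bigr)e_s}_{=:\,w_n},\qquad z_n\in\ZZ^2,$$
with $0<v_n<\epsilon_n$ and $0<u_n\le\delta_n v_n$. This is exactly the desired form once $\Vert w_n\Vert\le 4\epsilon_n$ is established.

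For the estimate I would first record that, by the choice of the $k_n$ inside the proof of Lemma \ref{varstablefaible}, each $k_n$ exceeds a threshold with $\varphi^{-k_n}<\epsilon_n$, whence $\varphi^{-K_n}\le\varphi^{-k_n}<\epsilon_n$ for $n\ge2$. Combining this with $0<v_n<\epsilon_n$, with $0<u_n\le\delta_n v_n<\epsilon_n$ (since $\delta_n\le\varphi^{-1}<1$), and with $|v-v_1|<v_1<\epsilon_1\le\varphi^{-1}<1$, I get $|u_n|<\epsilon_n$ and $|v_n+\varphi^{-K_n}(v-v_1)|<2\epsilon_n$. As $\{e_u,e_s\}$ is orthonormal, $\Vert w_n\Vert\le\Vert w_n\Vert_2=\sqrt{u_n^2+(v_n+\varphi^{-K_n}(v-v_1))^2}<\sqrt{5}\,\epsilon_n<4\epsilon_n$, which finishes the argument.

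The subtle point — and the one I expect to demand the most care — is the comparison between $v$ and $v_1$. The value $v=\beta\varphi^{-3}$ is forced by $\alpha=\beta/a$, and the construction necessarily produces $\beta<\varphi^{-1}v_1$ (the increments $\alpha_{n+1}-\alpha_n=\varphi^{-K_{n+1}}t_{n+1}$ decay geometrically and the first one is already bounded by $\varphi^{-2}\delta_1 v_1$), so in fact $v<v_1$ and the literal hypothesis ``$v>v_1$'' of Proposition \ref{prop:cantorbeta} fails. I would handle this by observing that the displayed decomposition is an algebraic identity coming from $z_n=A^{K_n}(\alpha_n e_u+v_1 e_s)-v_n e_s$ and $u_n=\varphi^{K_n}(\beta-\alpha_n)$, and hence holds for every real $v$; the sign of the correction $\varphi^{-K_n}(v-v_1)$ merely flips when $v<v_1$, but its modulus stays bounded by $\varphi^{-K_n}<\epsilon_n$, which is all the estimate above uses.
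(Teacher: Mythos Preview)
Your argument is correct. The decomposition in Lemma~\ref{varstablefaible}/Proposition~\ref{prop:cantorbeta} is indeed an algebraic identity in $v$ (the quantities $z_n,u_n,v_n$ are built independently of $v$), so it extends verbatim to $v<v_1$; your estimate $\Vert w_n\Vert_\infty\le\Vert w_n\Vert_2<\sqrt{5}\,\epsilon_n$ then closes the proof. One remark: you do not actually need the inequality $\beta<\varphi^{-1}v_1$; since $v=\beta\varphi^{-3}\in(0,1)$ and $v_1\in(0,\varphi^{-1})$, the crude bound $|v-v_1|<1$ is immediate and suffices.

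The paper proceeds differently. Rather than setting $\alpha=\beta/\langle H(1),e_u\rangle$ and then arguing that the restriction $v>v_1$ is inessential, it \emph{forces} $v>v_1$ by an integer shift: with $v_1<\Vert\,\langle e_u,H(1)\rangle^{-1}H(1)-e_u\,\Vert$, it defines $\alpha$ through the intersection
\[
\alpha H(1)\;=\;(1,0)^T+\beta e_u+v e_s,
\]
so that $v>v_1$ holds by construction and Proposition~\ref{prop:cantorbeta} applies literally. The extra lattice vector is then absorbed into $z_n$ since $P(n)(1,0)^T\in\ZZ^2$. The trade-off is clear: your route is more direct and avoids the geometric intersection argument, at the cost of stepping outside the stated hypothesis of the proposition; the paper's route keeps the hypothesis intact but needs the shift and a constraint on $v_1$. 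Either way yields $\Vert w_n\Vert\le 4\epsilon_n$.
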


\begin{proof}
Let $v_1 < \min \left( \epsilon_1,||\frac{1}{\langle e_u,H(1)\rangle }H(1)-e_u|| \right)$. Let
$0<\beta<1$ be given by Proposition \ref{prop:cantorbeta} with
$v_1$ and the sequences of $epsilon$'s and $delta$'s given there.
We consider the intersection of $\{tH(1); t \in \RR\}$ with
$\{z+\beta e_u + t e_s; t \in \RR\}$ where $z=(1\ 0)^T$. Call it
$\alpha H(1) = z+\beta e_u + v e_s$. By construction one has $v
> v_1$. Then by Proposition \ref{prop:cantorbeta} for $n\geq 2$,
$\alpha P(n) H(1)= P(n)z + z_n + u_n e_u +
(v_n+\varphi^{-K_n}(v-v_1))e_s$. Thus as $\varphi^{-K_n}\leq
\epsilon_n$, $v_n\leq \epsilon_n$ and $u_n\leq \delta_n\epsilon_n$
one concludes.
\end{proof}

Now the matrices $(M(n);n\geq 2)$ have been constructed we will
proceed to give an ordering to the Bratteli diagram induced
by them. 
We introduce the notion of \emph{best ordering} associated
to $(w,h)$ where $w=(w_1,w_2)^T \in \RR^2$ with $w_2\geq 0$ and $h
=(h_1,h_2)^T \in \NN^2$ with strictly positive coordinates such
that the slope $f=|w_1|/|w_2|$ of $\langle \{w\}\rangle ^{\bot}$ is smaller than
$h_2/h_1$. This ordering is described by a word $p=p_1 \ldots p_l 1 $
in $\{1,2\}^*$ of length $l=h_1+h_2$ defined recursively by: set
$p_0=0$ and for $0\leq n \leq l-1$

$$ p_{n+1} =
\left\{
\begin{array}{ll}
1 & \hbox{if } \langle (\sum_{i=1}^{n} e_{p_i}-h),w\rangle   > 0  \\
2 & \hbox{otherwise} \\
\end{array}
\right.
$$
where $e_1,e_2$ are the canonical vectors of $\RR^2$. Let
$w^{\bot}$ be a vector orthogonal to $w$. Consider the line $L=\{h
+ t w^{\bot} ; t \in \RR\}$. Notice that, since $w_2\geq 0$, a
point $y \in \RR^2$ is above this line if and only if $\langle y-h,w\rangle  >
0$. Thus $p_{n+1}=1$ if the integer vector $\sum_{i=1}^{n}
e_{p_i}$ is above the line $L$ and is equal to $2$ otherwise. In
particular, since $\langle h,w\rangle  > 0$  then $p_1=2$. This motivates the
following definition: $K(p) = \inf{\{i \geq 1 : p_i = 1\}}-2$.

\begin{lemma}
\label{geometriebasique} It holds,
\begin{itemize}
\item $K(p) \leq h_2 + \text{sign}({w_1}) f h_1  \leq h_1 (\frac{h_2}{h_1}+ \text{sign}({w_1}) f)$;
\item for all $j \geq K(p)$,
$ \left| \sum_{i = j}^{l} \langle e_{p_i},w\rangle  \right| \leq ||w||$.
\end{itemize}
\end{lemma}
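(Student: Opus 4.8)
The plan is to reduce both items to the behaviour of the scalar sequence $g(n)=\langle \sum_{i=1}^{n} e_{p_i}-h,w\rangle$, which measures (up to the Euclidean length of $w$) the signed distance of the $n$-th vertex $P_n=\sum_{i=1}^n e_{p_i}$ of the path to the line $L$. By the very definition of the best ordering, $p_{n+1}=1$ exactly when $g(n)>0$, and since $g(n+1)-g(n)=\langle e_{p_{n+1}},w\rangle=w_{p_{n+1}}$ the sequence obeys the simple rule: add $w_1$ when $g(n)>0$ and add $w_2\ge 0$ otherwise. The slope hypothesis $f<h_2/h_1$ is equivalent to $\langle h,w\rangle>0$, so $g(0)=-\langle h,w\rangle<0$, which both recovers $p_1=2$ and places the path strictly below $L$ at the start.

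For the first item I would observe that as long as no right-step has occurred the path climbs the vertical axis, i.e. $P_n=(0,n)$ and hence $g(n)=n\,w_2-\langle h,w\rangle$. Thus the first index $n_0$ with $g(n_0)>0$ is $n_0=\lceil \langle h,w\rangle/w_2\rceil$, and since $\inf\{i\ge 1: p_i=1\}=n_0+1$ we get $K(p)=n_0-1\le \langle h,w\rangle/w_2$. Expanding $\langle h,w\rangle/w_2=h_2+(w_1/w_2)h_1=h_2+\text{sign}(w_1)\,f\,h_1$ yields the first bound, and factoring out $h_1$ gives the equivalent second form.

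For the second item, telescoping the increments yields the clean identity $\sum_{i=j}^{l}\langle e_{p_i},w\rangle=g(l)-g(j-1)$, so everything reduces to controlling $|g|$ once the path has reached $L$. Here I would prove an invariance property: the interval $(w_1,w_2]$ is stable under the rule (an up-step from $g\le 0$ lands in $(w_1+w_2,w_2]$ and a right-step from $g>0$ lands in $(w_1,w_1+w_2]$, both inside $(w_1,w_2]$), and the crossing value $g(n_0)$ lies in $(0,w_2]\subseteq(w_1,w_2]$. Consequently $|g(n)|\le\max(|w_1|,w_2)=\Vert w\Vert$ for every $n\ge n_0$, while the explicit ascent formula gives $g(n_0-1)\in(-w_2,0]$ for the index just below the crossing. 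Combined with the fact that the path terminates at $h$, so that $g(l)=0$, this delivers $\left|\sum_{i=j}^{l}\langle e_{p_i},w\rangle\right|=|g(j-1)|\le\Vert w\Vert$ for $j\ge K(p)$.

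The main obstacle is item (2): one must first justify that the lattice path actually ends at $h$ (equivalently $g(l)=0$), which is where both the slope condition and the sign of $w_1$ enter. The argument is transparent in the self-correcting regime $w_1\le 0<w_2$, where the invariant band $(w_1,w_2]$ keeps the path within one lattice cell of $L$; the borderline indices at the foot of the crossing (around $n=K(p)$) require the ascent formula $g(n)=n\,w_2-\langle h,w\rangle$ rather than the band estimate. I expect the bookkeeping near the crossing, together with the verification that no right-step is taken before the path reaches height $n_0$, to be the only delicate points; the remainder is the telescoping identity and the stability of $(w_1,w_2]$.
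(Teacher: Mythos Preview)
Your proposal is correct and follows the same geometric idea as the paper's (two-sentence) proof. For item (1) both arguments compute where the line $L$ meets the vertical axis; for item (2) your scalar $g(n)=\langle P_n-h,w\rangle$ is exactly (up to normalisation) the ``orthogonal projection of $\sum_{i=j}^{l} e_{p_i}$ onto $\{tw\}$'' that the paper invokes, so your invariant-band argument is a fleshed-out version of the paper's one-line hint, and you rightly flag the implicit assumption $P_l=h$ (i.e.\ $g(l)=0$) that the paper leaves unspoken.
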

\begin{proof}
The intersection point of $L$ with $\langle \{(0,1)^T\}\rangle $ is
$\frac{\langle h,w\rangle }{w_2}(0,1)^T$. This gives the first inequality. The
second one follows directly when computing the orthogonal
projection of $\sum_{i=j}^{l} e_{p_i}$ over $\{tw; t\in \RR\}$.
\end{proof}

Fix two decreasing sequences of real numbers $(\epsilon_n; n\geq 1)$
and $(\delta_n; n\geq 1)$ with values in $]0,\varphi^{-1}]$ such
that $\delta_n \leq \epsilon_n$ for $n\geq 1$. Let $0< v_1 <
\epsilon_1$. Let $\alpha$ and $(k_n;n \geq 1)$ be as in
Corollary~\ref{valp}. Then $\alpha P(n) H(1)=w_n+z_n$ where $z_n
\in \ZZ^2$ and $w_n\in \RR^2$ for $n\geq 2$. From construction it
follows that $(w_n)_2 >0$.

Let $(X,T)$ be the minimal Cantor system defined from the ordered
Bratteli-Vershik diagram described as follows: (i) the vertex at each level
are labelled by $\{1,2\}$, (ii) the incidence matrices are given
by $M(n) = A^{k_n}$ for $n\geq 2$, and (iii) for $j\in \{1,2\}$
and $n\geq 2$ the order of the $m_j(n)=( M_{j,1}(n),M_{j,2}(n))^T$
edges arriving at vertex $j$ in level $n$ is given by the best
order associated to $(w_n, m_n(j))$ described by the word
$p^{(n,j)}$. Since $p^{(n,j)}_1=2$ and $p^{(n,j)}_{l+1}=1$ this
diagram has unique minimal and maximal points.

Thus for any $x \in X$ and $n \geq 1$ its suffix $s_n(x)$ is given
by,
$$s_n(x) = \sum_{k=o_n(x)+1}^{\langle m_j(n),H(1)\rangle } e_{p^{(n,j)}_k}+e_{1},$$
where $\tau_{n+1}(x)=j$ and $o_n(x)$ is the order of $x_{n+1}$.
Thus, given $i,j\in \{1,2\}$, all vectors of type
$\gamma_n=\sum_{k=o}^{\langle m_j(n),H(1)\rangle } e_{p^{(n,j)}_k}$ with $1\leq
l \leq \langle m_j(n),H(1)\rangle $ are the suffix $s_n(x)$ of some $x \in X$
with $\tau_n(x)=i$ and $\tau_{n+1}(x)=j$ where $i$ is such that
$p^{(n,j)}_o =i$.

Let $\mu$ be the unique invariant measure of $(X,T)$ (it is unique
since $\langle \mu(n),e_s\rangle =0$ for all $n\geq 1$). A direct computation
yields to
$$ \mu\{s_n=\gamma_n \, |\,
\tau_{n}=i,\tau_{n+1}=j\}=\frac{1}{M_{j,i}(n+1)}. $$

\begin{lemma}
There is a positive constant $C$ such that for all $n\geq 1$
$$\mu\{\langle s_n,w_n\rangle  > ||w_n||\}  \leq C \epsilon_n \ .$$
\end{lemma}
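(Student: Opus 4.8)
The plan is to decompose the bad set according to the vertex $j=\tau_{n+1}(x)$ reached at level $n+1$ and, for each $j$, to use the explicit description of $\langle s_n(x),w_n\rangle$ coming from the best ordering $p^{(n,j)}$ together with Lemma \ref{geometriebasique} to localize the event near the beginning of the ordering. Writing $m_j$ for the row of the incidence matrix recording the edges of $E_{n+1}$ arriving at $j$ and $l=\langle m_j,H(1)\rangle$ for their number, the suffix formula rewrites as
$$
\langle s_n(x),w_n\rangle=\Big\langle m_j-\sum_{k=1}^{o}e_{p^{(n,j)}_k},\,w_n\Big\rangle+(w_n)_1,
$$
where $o=o_n(x)$ is the order of $x_{n+1}$. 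Since $(w_n)_2>0$ and the first $K(p^{(n,j)})+1$ letters of $p^{(n,j)}$ equal $2$ (the first $1$ occurs at position $K(p^{(n,j)})+2$), this quantity is strictly decreasing in $o$ along that initial block, while by the second item of Lemma \ref{geometriebasique} the tail has absolute value at most $\|w_n\|$ once $o\ge K(p^{(n,j)})$. Hence the event $\{\langle s_n,w_n\rangle>\|w_n\|\}\cap\{\tau_{n+1}=j\}$ is contained in $\{o_n\le K(p^{(n,j)})+c\}$ for a universal constant $c$, and all but $O(1)$ of these offending edges emanate from vertex $2$.

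Next I would turn this into a measure estimate. Each edge of $E_{n+1}$ from $i$ to $j$ is used by a set of $\mu$-measure $h_i(n)\mu_j(n+1)$ (equivalently, conditionally on $\tau_n=i,\tau_{n+1}=j$ the edge is uniform, as recorded before the lemma). Summing the masses of the at most $K(p^{(n,j)})+c$ offending edges, which issue from vertex $2$, gives
$$
\mu\{\langle s_n,w_n\rangle>\|w_n\|\}\ \le\ \sum_{j}\big(K(p^{(n,j)})+c\big)\,h_2(n)\,\mu_j(n+1).
$$
The role of the best ordering is that $K(p^{(n,j)})+1$ equals, up to an additive $O(1)$, the quantity $\langle m_j,w_n\rangle/(w_n)_2$ (the number of unit $e_2$-steps needed to reach the defining line). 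Carrying this into the sum and using the fundamental relation $\mu(n)=M^{T}(n+1)\mu(n+1)$ to collapse the $j$-sum yields
$$
\sum_{j}\frac{\langle m_j,w_n\rangle}{(w_n)_2}\,h_2(n)\,\mu_j(n+1)=\frac{h_2(n)}{(w_n)_2}\,\langle w_n,\mu(n)\rangle,
$$
the $O(1)$ corrections being negligible because $h_2(n)\mu_j(n+1)\le 1/M_{j,2}(n+1)\to0$.

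The decisive step is then the cancellation hidden in $\langle w_n,\mu(n)\rangle$. Since $(X,T)$ is uniquely ergodic with $\langle\mu(n),e_s\rangle=0$, the vector $\mu(n)$ is colinear with $e_u$; using the decomposition $w_n=u_n e_u+\tilde v_n e_s$ produced by Proposition \ref{prop:cantorbeta} (so that $0<u_n\le\delta_n v_n$ and $\tilde v_n\ge v_n$), the $e_s$-component drops out and $\langle w_n,\mu(n)\rangle=u_n\langle e_u,\mu(n)\rangle$. Because $\langle e_s,(0,1)^T\rangle>0$ one has $(w_n)_2\ge c'\,v_n$ for a constant $c'>0$ and $n$ large, whence $u_n/(w_n)_2\le \delta_n/c'$; and $h_2(n)\langle e_u,\mu(n)\rangle$ is bounded since it is comparable to the mass $h_2(n)\mu_2(n)\le 1$ of the second tower. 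Combining these with $\delta_n\le\epsilon_n$ produces the bound $C\epsilon_n$.

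The hard part is precisely this last cancellation: estimating $K(p^{(n,j)})$ term by term only gives $\sum_j h_2(n)\mu_j(n+1)M_{j,2}(n+1)=h_2(n)\mu_2(n)=O(1)$, which is useless. One must keep the sum intact, recognize it as $\langle w_n,\mu(n)\rangle/(w_n)_2$ up to harmless corrections, and exploit that $w_n$ is almost parallel to the stable direction $e_s$ while $\mu(n)$ lies in the unstable direction $e_u$, so that their pairing sees only the tiny unstable component $u_n\le\delta_n v_n$ of $w_n$. Verifying the lower bound $(w_n)_2\ge c'v_n$ and checking that the $O(1)$ corrections in $K(p^{(n,j)})$ and in the tail estimate are genuinely negligible are the remaining technical points.
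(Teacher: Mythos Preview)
Your argument is correct, and up to the localization step it coincides with the paper's: both use the second item of Lemma~\ref{geometriebasique} to see that $\{\langle s_n,w_n\rangle>\|w_n\|\}\cap\{\tau_{n+1}=j\}$ forces $o_n(x)<K_j(n):=K(p^{(n,j)})$, and both note that these bad edges all emanate from vertex~$2$. The divergence is in how the quantity $K_j(n)/M_{j,2}(n+1)$ (equivalently, your sum $\sum_j K_j(n)\,h_2(n)\mu_j(n+1)$) is shown to be $O(\epsilon_n)$.

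The paper works \emph{term by term}. Using the first item of Lemma~\ref{geometriebasique} together with the sign $(w_n)_1<0$, it gets
\[
\frac{K_j(n)}{M_{j,2}(n+1)}\ \le\ \frac{M_{j,1}(n+1)}{M_{j,2}(n+1)}\Big(\frac{M_{j,2}(n+1)}{M_{j,1}(n+1)}-f_n\Big),
\]
where $f_n=|(w_n)_1|/(w_n)_2$ is the slope of $\langle\{w_n\}\rangle^\perp$. It then observes that both $M_{j,2}(n+1)/M_{j,1}(n+1)$ and $f_n$ are within $O(\epsilon_n)$ of $\varphi^{-1}$: the first because $M(n+1)=A^{k_{n+1}}$ with $\varphi^{-k_{n+1}}\le\epsilon_{n+1}\le\epsilon_n$, the second because $u_n/\bar v_n\le\delta_n\le\epsilon_n$. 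So $K_j(n)/M_{j,2}(n+1)\le C\epsilon_n$ for each $j$, with $C=2+\varphi^{-2}$.

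Your route is \emph{global}: you keep the sum over $j$, identify it as $h_2(n)\langle w_n,\mu(n)\rangle/(w_n)_2$ via $\mu(n)=M^T(n+1)\mu(n+1)$, and then exploit the orthogonality $\langle\mu(n),e_s\rangle=0$ so that only the unstable component $u_n\le\delta_n v_n$ of $w_n$ survives. This is a nice structural explanation of the same cancellation, and the check that the $O(1)$ defects in $K_j(n)\approx\langle m_j,w_n\rangle/(w_n)_2$ contribute $O(1/M_{j,2}(n+1))=O(\varphi^{-k_{n+1}})\le O(\epsilon_n)$ is correct.

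One remark on your commentary: your claim that ``estimating $K(p^{(n,j)})$ term by term only gives $O(1)$, which is useless'' is too pessimistic. What is useless is the crude bound $K_j(n)\le M_{j,2}(n+1)$; the paper's sharper per-$j$ bound $K_j(n)\le M_{j,1}(n+1)\big(M_{j,2}/M_{j,1}-f_n\big)$ already isolates the small slope difference and gives $O(\epsilon_n)$ without any summation trick.
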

\begin{proof}
Let $i,j \in \{1,2\}$. Set $K_j(n)=K(p^{(n,j)})$. From the second
statement of Lemma~\ref{geometriebasique} one gets

\begin{eqnarray*}
\mu\{\langle s_n,w_n\rangle  > ||w_n|| \, | \, \tau_n = i, \tau_{n+1} = j\}
&\leq
& \mu\{ 1\leq o_n <  K_{j}(n) \, | \, \tau_n = i, \tau_{n+1} = j\}\\
&\leq
&\frac{ |\{ 1\leq o < K_j(n) \ ; \ p^{(n,j)}_o = i\}| }{M_{j,i}(n+1)} \\
\end{eqnarray*}

If $\tau_{n}(x)=1$ then necessarily $o_n(x) > K_{j}(n)$, while if
$\tau_{n}(x)=2$ then $|\{ 1\leq o < K_j(n) \ ; \ p^{(n,j)}_o =
2\}|= K_{j}(n)-1$. So in this case
$$\mu\{\langle s_n,w_n\rangle  > ||w_n|| \, | \, \tau_n = i, \tau_{n+1} = j\}
\leq \frac{K_{j}(n)}{M_{j,2}(n+1)} \ .$$

Let $f_n$ be the slope of the orthogonal line defined from $w_n$.
By construction (it is not difficult to verify) one has
$(w_n)_1<0$. Then from Lemma~\ref{geometriebasique} one gets

\begin{align*}
\frac{K_{j}(n)}{M_{j,2}(n+1)} &\leq
\frac{M_{j,1}(n+1)}{M_{j,2}(n+1)}
\left(\frac{M_{j,2}(n+1)}{M_{j,1}(n+1)}- f_n \right) \\ &\leq
\frac{M_{j,1}(n+1)}{M_{j,2}(n+1)} \left(
\left|\frac{M_{j,2}(n+1)}{M_{j,1}(n+1)}- \varphi^{-1} \right| +
\left|\varphi^{-1} - f_n \right| \right) \ .
\end{align*}

Let $w_n=\bar v_n e_s+ u_n e_u$. Recall from construction that
$\bar v_n=v_n + \varphi^{-K_n}(v-v_1)$, $v_n \leq \epsilon_n$ and
$u_n\leq \delta_n v_n$. Also $\varphi^{-k_n}\leq \epsilon_n$.

The slope $f_n$ is given by
$$f_n = \frac{\varphi^{-1} \bar v_n-u_n}{\bar v_n + \varphi^{-1} u_n} = \frac{ \varphi^{-1} -
\frac{u_n}{\bar v_n}}{1 + \varphi^{-1} \frac{u_n}{\bar v_n}}.$$
Thus
$$|f_n-\varphi^{-1}|=
\left | \frac{u_n}{\bar v_n}
\frac{1+\varphi^{-2}}{1+\varphi^{-1}\frac{u_n}{\bar v_n}} \right |
\leq \frac{u_n}{v_n}(1+\varphi^{-2}) \leq (1+\varphi^{-2})
\delta_n \leq (1+\varphi^{-2}) \epsilon_n\ .
$$

On the other hand, $\frac{M_{j,2}(n+1)}{M_{j,1}(n+1)}$ approaches
$\varphi^{-1}$ at speed $\varphi^{-k_{n+1}}\leq \epsilon_{n+1}
\leq \epsilon_n$. Thus
$$\frac{K_{j}(n)}{M_{j,2}(n+1)} \leq C \epsilon_n$$
where $C=2+\varphi^{-2}$.

To conclude, one integrates this uniform bound with respect to $i$
and $j$.
\end{proof}

\begin{remark}
In general, the quantities $\langle s_n(x),w_n\rangle $ are not bounded. But it
is more likely that a point taken at random has $\langle s_n(x),w_n\rangle $ of
order $||w_n||$.
\end{remark}

One assumes $(\epsilon_n;n\geq1)$ is summable ($\sum_{n\geq 1}
\epsilon_n <\infty$).

\begin{theo} The complex number $\exp(2i\pi\alpha)$ is an
eigenvalue of $(X,T,\mu)$ that is not continuous.
\end{theo}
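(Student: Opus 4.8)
The plan is to prove the two assertions separately: first that $g_n(x) = \lambda^{-r_n(x)}$ converges almost everywhere to a genuine $L^2$-eigenfunction for $\lambda = \exp(2i\pi\alpha)$, and second that $\lambda$ fails the necessary condition for continuity provided by Theorem~\ref{th:condcont}. For the eigenfunction I would set $g_n(x) = \lambda^{-r_n(x)}$, so that $|g_n|\equiv 1$, and study the increments $g_{n+1}-g_n$. From the return-time formula \eqref{e:formulareturn} one has $r_{n+1}(x)-r_n(x) = \langle s_n(x),H(n)\rangle$, and since $\alpha H(n) = \alpha P(n)H(1) = w_n + z_n$ with $z_n\in\ZZ^2$ (Corollary~\ref{valp}), the integer part $\langle s_n(x),z_n\rangle$ drops out modulo $\ZZ$. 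This yields the pointwise identity $|g_{n+1}(x)-g_n(x)| = |\exp(-2i\pi\langle s_n(x),w_n\rangle)-1| \le 2\pi\,|\langle s_n(x),w_n\rangle|$.

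Next I would split $X$ according to the size of $\langle s_n(x),w_n\rangle$. On the good set where $o_n(x)\ge K_{\tau_{n+1}(x)}(n)$, the two-sided bound in the second item of Lemma~\ref{geometriebasique} gives $|\langle s_n(x),w_n\rangle|\le 2\Vert w_n\Vert \le 8\epsilon_n$ (using $\Vert w_n\Vert\le 4\epsilon_n$ from Corollary~\ref{valp}), hence $|g_{n+1}(x)-g_n(x)|\le 16\pi\epsilon_n$ there. The complement is controlled by the measure estimate $\mu\{\langle s_n,w_n\rangle > \Vert w_n\Vert\}\le C\epsilon_n$ proved just above, whose proof in fact bounds the whole bad set $\{1\le o_n < K_{\tau_{n+1}}(n)\}$ by $C\epsilon_n$. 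Since $\sum_n\epsilon_n<\infty$, Borel--Cantelli shows that almost every $x$ lies in the good set for all large $n$, so for such $x$ the tail $\sum_{n\ge N(x)}|g_{n+1}(x)-g_n(x)|\le 16\pi\sum_{n\ge N(x)}\epsilon_n$ converges. Thus $(g_n(x))_n$ is Cauchy for $\mu$-almost every $x$ and converges to a limit $f(x)$ with $|f|=1$ a.e.; by dominated convergence $g_n\to f$ in $L^2(\mu)$ too, and in particular $f\neq 0$.

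It remains to identify $f$ as an eigenfunction and to rule out continuity. Since $r_n(x)$ equals the height of $x$ in its level-$n$ tower, one has $r_n(Tx)=r_n(x)-1$ and hence $g_n\circ T = \lambda g_n$ off the set $E_n$ of points at the top of their tower; as $\mu(E_n)=\sum_k\mu_k(n)\to 0$, the $L^2$-estimate $\Vert g_n\circ T-\lambda g_n\Vert_2^2\le 4\mu(E_n)\to 0$ passes to the limit and gives $f\circ T=\lambda f$, so $\lambda$ is a measurable eigenvalue. For non-continuity, if $\lambda$ were a continuous eigenvalue then Theorem~\ref{th:condcont} would furnish $m$ and $v$ with $P(n,m)v=A^{K_n-K_m}v\to 0$, forcing $v\in E^s$, together with $w\in\ZZ^2$ and $\alpha P(m)H(1)=v+w\in\ZZ^2+E^s$. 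Writing $\alpha P(m)H(1)=A^{K_m}(z+\beta e_u + v e_s)$ with $A^{K_m}z\in\ZZ^2$ and noting $A^{-K_m}\in GL_2(\ZZ)$ preserves both $\ZZ^2$ and $E^s$, this would force $\beta e_u\in\ZZ^2+E^s$, contradicting the construction of $\beta$ in Proposition~\ref{prop:cantorbeta}. The main obstacle is precisely the bad-set handling in the second paragraph: the $L^2$-increments only satisfy $\Vert g_{n+1}-g_n\Vert_2^2\le C'\epsilon_n$, so the bad-set contribution has size of order $\sqrt{\epsilon_n}$ and need not be summable, which blocks a direct $L^2$ telescoping argument; the resolution is to work pointwise, where the summable-in-measure bad sets are eliminated by Borel--Cantelli and only the genuinely small good-set increments remain.
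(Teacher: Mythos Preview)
Your proof is correct and follows essentially the same route as the paper: Borel--Cantelli on the summable bad sets $\{\langle s_n,w_n\rangle>\|w_n\|\}$ gives almost-sure convergence of $\lambda^{-r_n(x)}$ (the paper phrases this as convergence of the series $\sum_n\langle s_n(x),\alpha P(n)H(1)\rangle$ modulo $\ZZ$, which is the same thing), and Theorem~\ref{th:condcont} together with the choice of $\beta$ rules out continuity. You supply more detail than the paper in two places---the eigenfunction relation $f\circ T=\lambda f$, which the paper simply recalls as a known fact, and the explicit $GL_2(\ZZ)$ reduction for non-continuity, which the paper leaves as ``follows directly from construction''; note only that the literal statement of Proposition~\ref{prop:cantorbeta} is $\beta H(1)\notin\ZZ^2+E^s$ rather than $\beta e_u\notin\ZZ^2+E^s$, but the countability argument in the paragraph preceding that proposition (that $(\ZZ^2+E^s)\cap E^u$ is countable while the admissible $\beta$'s form a Cantor set) yields the latter as well, so your contradiction stands.
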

\begin{proof}
The fact that it is not continuous follows directly from
construction and Theorem \ref{th:condcont}.

First we prove the series $ \sum_{n\geq 1} \vvert \langle s_n(x), \alpha
P(n) H(1)\rangle  \vvert$ converges $\mu$-almost surely. Since
$\sum_{n\geq 1} \mu\{ \langle s_n,w_n\rangle  > ||w_n||\} \leq \sum_{n\geq 1}
\epsilon_n < \infty$, then by Borel-Cantelli Lemma one has for
$\mu$-almost $x \in X$
$$\sum_{n\geq 1} 1_{\{ \langle s_n(x),w_n\rangle  > ||w_n|| \}} < \infty.$$

Denote by $N_0(x)$ the first integer such that for all $N >
N_0(x)$, $\langle s_n(x),w_n\rangle  \leq ||w_n||$. Since $N_0$ is almost surely
finite, one has, for $\mu$-almost all $x \in X$ and $N>N_0(x)$,
$$    \sum_{n>N}  \vvert \langle s_n(x), \alpha P(n) H(1)\rangle 
\vvert
 \leq   \sum_{n > N} |\langle s_n(x),w_n\rangle | \leq  \sum_{n >N} ||w_n|| \leq \sum_{n
>N} \epsilon_n \ .$$
Hence the series converges almost surely.

To conclude recall $f(x)=\exp(-2i\pi \sum_{n\geq 1} \langle s_n(x),
\alpha P(n) H(1)\rangle )$ is an eigenfunction of $(X,T)$ associated to
$\exp(2i\pi\alpha)$.
\end{proof}

\section{Example 2: continuous and measurable eigenvalues of Toeplitz type systems of finite rank}

It is known that any subgroup of $S^1$ can be the set of
measurable eigenvalues of a Toeplitz system (see \cite{DL} or \cite{Dow}).
The main motivation of this section is to show a class of examples
of Toeplitz Cantor minimal systems where the finite rank
assumption restricts the possibilities of measurable eigenvalues.

An ordered Bratteli-Vershik diagram is of {\rm Toeplitz} type if
for all $n\geq 1$ and for all $u,v\in V_{n}$ the number of edges in $E_{n-1}$
finishing at $u$ coincides with the number of edges in $E_{n-1}$
finishing at $v$. Denote this number $q_n$ and set $p_n = q_n
q_{n-1} \cdots q_1$. We say $(q_n;n\geq 1)$ is the characteristic
sequence of the diagram. A Cantor minimal system is said to be of
Toeplitz type if it is given by a Bratteli-Vershik diagram of this
type. This definition is motivated by the characterization of
Toeplitz subshifts in \cite{GJ}. That is, a Bratteli-Vershik
diagram of {\rm Toeplitz} type is a Toeplitz subshift whenever it
is expansive. 
First we prove a known result  for Toeplitz
subshifts.

\begin{theo}
\label{cont-eigen-toeplitz}
Let $(X,T)$ be a Cantor minimal system of Toeplitz type given by a
Bratteli-Vershik system with characteristic sequence $(q_n;n\geq
1)$. Then, $\exp(2i\pi\alpha)$ is a continuous eigenvalue of
$(X,T)$ if and only if $\alpha = \frac{a}{p_n}$ for some $a \in
\ZZ$ and $n\geq 1$.
\end{theo}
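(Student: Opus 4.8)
The plan is to reduce everything to the return-time formula \eqref{e:formulareturn} together with the criterion of Proposition \ref{condcont}. First I would extract the structural meaning of the Toeplitz condition: it says exactly that every row sum of $M(n)$ equals $q_n$, i.e. $M(n)(1,\dots,1)^T = q_n(1,\dots,1)^T$. Since $H(1)=(1,\dots,1)^T$, iterating gives $H(k)=P(k)H(1)=p_k(1,\dots,1)^T$, so that all towers of $\P(k)$ have the same height $p_k$. Writing $\sigma_k(x)=\langle s_k(x),(1,\dots,1)^T\rangle \in\NN$ for the sum of the coordinates of the suffix vector (which ranges over $\{0,\dots,q_{k+1}-1\}$ as $x_{k+1}$ runs over the $q_{k+1}$ edges into a fixed vertex), formula \eqref{e:formulareturn} becomes $r_n(x)=s_0(x)+\sum_{k=1}^{n-1}p_k\,\sigma_k(x)$. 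The key observation is that, for $m>N$, the difference $r_m(x)-r_N(x)=\sum_{k=N}^{m-1}p_k\,\sigma_k(x)$ is always an integer multiple of $p_N$, since $p_k=p_N\,q_{N+1}\cdots q_k$ for $k\ge N$.

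For the sufficiency, suppose $\alpha=a/p_N$ with $a\in\ZZ$. For every $n>N$ and every $k$ with $N\le k\le n-1$ the number $\tfrac{a}{p_N}p_k\sigma_k(x)$ is an integer, so modulo $\ZZ$ one has $\alpha r_n(x)\equiv\tfrac{a}{p_N}\big(s_0(x)+\sum_{k=1}^{N-1}p_k\sigma_k(x)\big)$, a quantity depending only on the first $N$ edges of $x$ and not on $n$. Hence $(\alpha r_n(x);n\ge1)$ is eventually constant modulo $\ZZ$, uniformly in $x$, and Proposition \ref{condcont} shows that $\exp(2i\pi\alpha)$ is a continuous eigenvalue.

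For the necessity, assume $\exp(2i\pi\alpha)$ is a continuous eigenvalue. By Proposition \ref{condcont} the sequence $(\alpha r_n(x);n\ge1)$ converges modulo $\ZZ$, uniformly in $x$, to some $L(x)$. Fix $\epsilon\in(0,1/6)$ and choose $N$ with $\vvert\alpha r_n(x)-L(x)\vvert<\epsilon$ for all $n\ge N$ and all $x$; the triangle inequality on $\RR/\ZZ$ then yields $\vvert\alpha(r_m(x)-r_N(x))\vvert<2\epsilon$ for all $m\ge N$ and all $x$. Next I would realize all multiples of $p_N$: given $m>N$ and $j\in\{0,\dots,p_m/p_N-1\}$, the mixed-radix expansion of $j$ in the bases $q_{N+1},q_{N+2},\dots$ prescribes coordinate-sums $\sigma_N,\dots,\sigma_{m-1}$ attained by some $x\in X$ with $r_m(x)-r_N(x)=j\,p_N$ (this uses $M(n)>0$, so every prescribed sequence of vertices and edge-orders occurs along an actual path). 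Thus $\vvert j\,\alpha p_N\vvert<2\epsilon$ for every such $j$, and letting $m\to\infty$ we obtain $\vvert j\,\alpha p_N\vvert<2\epsilon$ for all integers $j\ge0$.

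Finally I would conclude that $\alpha p_N\in\ZZ$. If $\alpha p_N$ were irrational the set $\{\,j\alpha p_N\bmod1:j\ge0\}$ would be dense in $[0,1)$, contradicting $2\epsilon<1/3$; hence $\alpha p_N=a/b$ in lowest terms, and if $b\ge2$ some multiple $j\alpha p_N$ equals $\lfloor b/2\rfloor/b\bmod1$, which lies at distance $\ge1/3$ from $\ZZ$, again a contradiction. Therefore $b=1$ and $\alpha=a/p_N$, as claimed. The step I expect to be the main obstacle is the realization claim: one must verify carefully that every relative return value $j\,p_N$ (equivalently every admissible tuple $(\sigma_N,\dots,\sigma_{m-1})$) is attained by a genuine infinite path, and that $\sigma_k(x)=\langle s_k(x),(1,\dots,1)^T\rangle$ indeed exhausts $\{0,\dots,q_{k+1}-1\}$; once this is in place, the uniform Cauchy estimate and the circle-density argument are routine.
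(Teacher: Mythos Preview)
Your proof is correct, and for the sufficiency direction it is essentially the paper's argument: once $H(k)=p_k(1,\dots,1)^T$ is observed, both you and the paper note that every term $\langle s_k(x),\alpha P(k)H(1)\rangle$ with $k\ge N$ lies in $\ZZ$ when $\alpha=a/p_N$, so the series in Proposition~\ref{condcont} stabilizes.

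For the necessity direction you take a genuinely different route. The paper expands $\alpha=\sum_{i\ge1}a_i/p_i$ in the mixed-radix base $(p_n)$, uses Theorem~\ref{th:condcont} to get $\alpha p_n\to0\ ({\rm mod}\ \ZZ)$, then uses the one-step increment $\alpha p_n\sigma_n(x)\to0$ uniformly and tests it at a single carefully chosen point $x_n$ per level (with $\sigma_n(x_n)=\lfloor q_{n+1}/(2a_{n+1})\rfloor$) to force the digits $a_n$ to vanish eventually. You instead extract a uniform Cauchy estimate $\vvert \alpha(r_m(x)-r_N(x))\vvert<2\epsilon$, realize \emph{every} multiple $jp_N$ as a difference $r_m(x)-r_N(x)$ via the mixed-radix expansion of $j$, and conclude $\alpha p_N\in\ZZ$ by the elementary fact that $\{j\alpha p_N\bmod 1:j\ge0\}$ cannot stay in a short arc unless $\alpha p_N$ is an integer. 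Your argument is more self-contained (it bypasses Theorem~\ref{th:condcont} and the digit expansion of $\alpha$) and yields the slightly sharper conclusion $\alpha p_N\in\ZZ$ for the \emph{fixed} index $N$ coming from the Cauchy bound; the paper's argument is shorter once Theorem~\ref{th:condcont} is available and highlights the connection with the base-$(p_n)$ expansion. Your caution about the realization step is appropriate but the construction is indeed routine: working backwards from an arbitrary $v_m\in V_m$, at each level there is a unique edge into the current vertex with the prescribed order, and since $M(n)>0$ the resulting finite path extends to an element of $X$.
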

\begin{proof}
Let $\exp(2i\pi\alpha)$ be a continuous eigenvalue of $(X,T)$ with
$\alpha \in ]0,1[$. Let $\alpha = \sum_{i\geq 1} \frac{a_i}{p_i}$
with $a_i \in \{0,\ldots,q_i-1\}$ for all $i\geq 1$ be the
expansion of $\alpha$ in base $(p_n; n\geq 1)$.

By Theorem \ref{th:condcont} one has that $\alpha p_n \to 0 \mod
\ZZ$ as $n\to \infty$. This implies that $\sum_{i\geq n+1}
\frac{a_i}{q_{n+1} \cdots p_i} \to_{n\to \infty} 0$. 
Recall $H(1)=(1,\ldots,1)^T$.
From Proposition \ref{condcont} one knows that $\alpha p_n
\langle s_n(x),H(1)\rangle $ converges to $0$ modulo $\ZZ$ and uniformly in $x$.
Let $x_n \in X$ such that $\langle s_n(x_n),H(1)\rangle =b_{n+1}=\lfloor
\frac{q_{n+1}}{2a_{n+1}} \rfloor$. It exists since $\langle s_n(x),H(1)\rangle $
can take any value between $\{0,\ldots,q_{n+1}-1\}$. If $(a_n;n
\geq 1)$ is not ultimately equal to $0$, then
$\lim_{n\to\infty}\alpha p_n \langle s_n(x_n),H(1)\rangle  = \frac{1}{2}$, that
contradicts the fact that it is $0$ modulo $\ZZ$. One concludes
$(a_n;n \geq 1)$ is ultimately equal to $0$ and that $\alpha =
\frac{a}{p_m}$ for some $a\in \NN$ and $m\in \NN$.

Conversely, assume $\alpha = \frac{a}{p_m}$ for some $a\in \NN$
and $m\in \NN$. Then for all $x \in X$ and $n\geq m$ one has
$$\langle s_n(x), \alpha P(n) H(1)\rangle = a \frac{p_n}{p_m} \langle s_n(x),H(1)\rangle 
=a q_{m+1}\cdots q_{n} \langle s_n(x),H(1)\rangle   ,
$$

which belongs to $\ZZ$.
Then
$\sum_{n\geq 1} \langle s_n(x),\alpha P(n)H(1)\rangle $ converges uniformly
modulo $\ZZ$. One concludes by using Proposition \ref{condcont}.
\end{proof}

Let $(X,T)$ be a minimal Cantor system given by a Bratteli-Vershik
diagram of Toeplitz type. The next proposition shows that in the
class of linearly recurrent systems of Toeplitz type, continuous
and measurable eigenvalues coincide.

\begin{theo}
\label{toepLR} Let $(X,T)$ be a
Toeplitz type system with finite rank
 and $\mu$ be the unique $T$-invariant probability
measure. Let $(q_n;n\geq 1)$ be the characteristic sequence of the
associated diagram and suppose it is bounded. 
Then $\exp(2i\pi\alpha)$ is an eigenvalue of
$(X,T,\mu)$ if and only if $\alpha=\frac{a}{p_m}$ for some $a\in
\ZZ$ and $m\in \NN$. In particular, they are all continuous
eigenvalues.
\end{theo}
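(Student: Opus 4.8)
The plan is to reduce the statement to the continuous case already settled in Theorem~\ref{cont-eigen-toeplitz}, exploiting the two special features of the hypotheses: that the heights are constant across each level, and that boundedness of $(q_n)$ makes the diagram combinatorially rigid. First I would record these consequences. Since each vertex of $V_n$ receives exactly $q_n$ edges, the incidence matrices satisfy $M(n)H(1)=q_nH(1)$, whence $H(n)=P(n)H(1)=p_nH(1)$ and \emph{all} towers of $\P(n)$ have the common height $p_n$. Moreover $M(n)$ is a $d\times d$ positive integer matrix with every row sum equal to $q_n\le Q$, so the set $\{M(n);n\ge 1\}$ is finite and the system is linearly recurrent. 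From this I obtain the uniform estimates needed to apply the necessary condition of Section~\ref{section5}: the representation is clean with $I=\{1,\dots,d\}$, the set $J$ equals all of $\{1,\dots,d\}^2$, and $1\le M_{k,l}(n+1)\le Q$. Writing $\theta_n=\{\alpha p_n\}$ and using $\langle s,H(n)\rangle=p_n\langle s,H(1)\rangle$ with $\langle s,H(1)\rangle\in\{0,\dots,q_{n+1}-1\}$ an integer, one has $\lambda^{\langle s,H(n)\rangle}=\exp(2i\pi\theta_n\langle s,H(1)\rangle)$.

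The core is an elementary observation about base-$(p_n)$ expansions: \emph{when $(q_n)$ is bounded, $\vvert \alpha p_n\vvert\to 0$ forces $\alpha\in\frac1{p_m}\ZZ$ for some $m$}. Indeed, writing $\alpha=\sum_{i\ge 1}a_i/p_i$ with $a_i\in\{0,\dots,q_i-1\}$ one has $\theta_{n+1}=\{q_{n+1}\theta_n\}$, so if $\alpha\notin\bigcup_m\frac1{p_m}\ZZ$ then $\theta_n\in(0,1)$ for all $n$. Since $q_{n+1}\ge 2$, whenever $\theta_n$ is close to $0$ (resp. to $1$) the recursion multiplies its distance to $0$ (resp. to $1$) by a factor $\ge 2$; hence the distance of $\theta_n$ to the nearer of $\{0,1\}$ cannot stay small for all large $n$ unless it is eventually exactly $0$, i.e. unless $\alpha\in\frac1{p_m}\ZZ$. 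Granting this, it suffices to prove that a measurable eigenvalue satisfies $\vvert \alpha p_n\vvert\to 0$; then $\alpha=a/p_m$, and Theorem~\ref{cont-eigen-toeplitz} supplies both the converse implication and the assertion that the eigenvalue is in fact continuous.

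To obtain $\vvert \alpha p_n\vvert\to 0$ I would feed $\lambda$ into the necessary condition of Section~\ref{section5}. Using $M_{k,l}(n+1)\le Q$ and $J=\{1,\dots,d\}^2$, convergence of that series forces each summand to $0$, so for every pair $(l,k)$ and every $s\in S_n(l,k)$ one gets $\theta_n\langle s,H(1)\rangle-\rho_k(n+1)+\rho_l(n)\to 0\pmod{\ZZ}$. Comparing two suffixes arriving at a common vertex $k$ cancels the phase $\rho_k(n+1)$ and yields $\theta_n\,(\langle s,H(1)\rangle-\langle s',H(1)\rangle)+\rho_l(n)-\rho_{l'}(n)\to 0\pmod{\ZZ}$, where $\langle s,H(1)\rangle-\langle s',H(1)\rangle$ is the difference of the positions of the two incoming edges.

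The main obstacle is precisely the presence of the base phases $\rho_l(n)$, coupled with the fact that the distribution of suffix values among the source vertices is governed by the fixed, arbitrary order of the diagram, over which I have no control. I would remove the phases using that all heights equal $p_n$: traversing one full tower multiplies an eigenfunction by $\lambda^{p_n}=\exp(2i\pi\theta_n)$, so the base phases $\rho_l(n)$ of the different towers at level $n$ differ from one another by integer multiples of $\theta_n$ modulo $\ZZ$, with bounded multiplicities since there are only $d$ towers. Substituting these relations into the congruences above leaves a family of congruences of the form $N_n\theta_n\to 0\pmod{\ZZ}$ with $N_n$ nonzero integers bounded in terms of $Q$ and $d$; combining this with the recursion $\theta_{n+1}=\{q_{n+1}\theta_n\}$, which is a finite-state descent because all the quantities involved are bounded, rules out any nonzero subsequential limit of $(\theta_n)$ and gives $\vvert \alpha p_n\vvert\to 0$. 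The delicate bookkeeping of the phases $\rho_l(n)$ and of the order-dependent suffix positions is where essentially all the real work lies; once it is done, the two elementary reductions above close the argument.
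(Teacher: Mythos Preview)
Your overall reduction is the same as the paper's: show that any measurable eigenvalue satisfies $\vvert\alpha p_n\vvert\to 0$, and then use the base-$(p_n)$ expansion together with boundedness of $(q_n)$ to force $\alpha\in\frac{1}{p_m}\ZZ$, after which Theorem~\ref{cont-eigen-toeplitz} closes the argument. The difference lies entirely in how $\vvert\alpha p_n\vvert\to 0$ is obtained.

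The paper does not attempt to extract this from the Section~\ref{section5} condition. It simply observes---as you also did---that boundedness of $(q_n)$ forces $\{M(n);n\ge 1\}$ to be finite, so the system is linearly recurrent, and then invokes the necessary condition for measurable eigenvalues of linearly recurrent systems from \cite{BDM}, which gives $p_n\alpha\to 0\pmod\ZZ$ directly. That is the entire proof.

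Your detour through Section~\ref{section5} introduces the phases $\rho_l(n)$, and the step where you eliminate them is not justified. The assertion that ``the base phases $\rho_l(n)$ of the different towers differ from one another by integer multiples of $\theta_n$'' does not follow from all towers having height $p_n$: the relation $f\circ T^{p_n}=\lambda^{p_n}f$ is a pointwise statement about $f$, whereas each $\rho_l(n)$ is the phase of an \emph{average} of $f$ over $B_l(n)$, and there is no a priori arithmetic link between these averages. The only relations among the $\rho_l(n)$ actually available are those produced by the Section~\ref{section5} condition itself, and those depend on the local order. More seriously, even granting a conclusion of the form $N_n\theta_n\to 0\pmod\ZZ$ with $N_n$ bounded and nonzero, you cannot deduce $\vvert\theta_n\vvert\to 0$ from the recursion alone: with $q_n\equiv 3$, $d=2$, and orders $1\mapsto 121$, $2\mapsto 212$, the Section~\ref{section5} condition is satisfied exactly (every term zero) by $\theta_n\equiv\tfrac12$, $\rho_1(n)=0$, $\rho_2(n)=\tfrac12$, yet $\tfrac12\notin\bigcup_m\tfrac{1}{3^m}\ZZ$. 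That diagram is periodic and so falls outside the hypotheses, but your outline never uses minimality to exclude this behaviour, which shows that the Section~\ref{section5} necessary condition by itself is too weak for what you want. Since you already have linear recurrence in hand, the clean fix is to invoke \cite{BDM} at that point, exactly as the paper does.
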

\begin{proof}
Let
 $\exp(2i\pi\alpha)$ be a measurable eigenvalue with $\alpha
\in ]0,1[$.
and $\alpha = \sum_{i\geq 1} \frac{a_i}{p_i}$
with $a_i \in \{0,\ldots,q_i-1\}$ for all $i\geq 1$ be the
expansion of $\alpha$ in base $(p_n; n\geq 1)$.
From \cite{BDM} one knows that $\langle \alpha
P(n)H(1),e_1\rangle =p_n \alpha \to_{n\to \infty} 0 \mod \ZZ$. This
implies that $\frac{a_n}{q_n}$ goes to zero with $n$ goes to infinity.
The characteristic sequence being bounded
one concludes
$(a_n;n \geq 1)$ is ultimately equal to $0$ and that $\alpha =
\frac{a}{p_m}$ for some $a\in \NN$ and $m\in \NN$.
We conclude using Theorem \ref{cont-eigen-toeplitz}.
\end{proof}

Let $(X,T)$ be a minimal Cantor system of Toeplitz type of finite
rank $d$ and let $\mu$ be a $T$-ergodic probability measure. Let
$(q_n;n\geq 1)$ be the characteristic sequence of the associated
Bratteli-Vershik diagram.

Consider $\lambda=\exp(2i\pi\alpha)$ to be a measurable eigenvalue
of $(X,T,\mu)$ and $f:X \to \CC$ to be an associated eigenfunction
with $|f|=1$, $\mu$-almost surely. One has that
$f_n=\EE_\mu(f|{\T}_n)$ converges $\mu$-almost surely and in
$L^2(X,\B(X),\mu)$ to $f$. 
Following the notations of Section \ref{section5}, we recall

$$
f_n(x)=\frac{\int_{B_k(n)} f d\mu}{\mu_k(n)} \lambda^{-j}=c_k(n)\lambda^{\rho_k (n) -j} 
$$

whenever $x \in T^{-j}B_k(n)$ for some $1\leq k \leq d$
and $0\leq j < h_k(n)$. 
We set ${c'}_k(n) = c_k(n)\lambda^{\rho_k (n)} $.
Remark
$$j=\sum_{i=1}^{n-1} \langle s_i(x),P(i)H(1)\rangle = \sum_{i=1}^{n-1}
p_i \langle s_i(x),H(1)\rangle = \sum_{i=1}^{n-1} p_i \bar s_i(x),$$ where $\bar
s_i(x)=\langle s_i(x),H(1)\rangle $. Since the system is of Toeplitz type one
knows that $0\leq \bar s_i(x) <q_{i+1}$. Given $1\leq i,k \leq d$ and
$n\geq 2$ define $S_{k,i}(n)=\{\bar s_n(x) : x \in X,
\tau_n(x)=k,\tau_{n+1}(x)=i \}$.

Let $I=\{i\in \{1,\ldots,d\};\liminf_{n\to \infty} \mu\{\tau_n=i\}
> 0\}$. Contracting the Bratteli-Vershik diagram given $(X,T)$ if needed we can
assume $\sum_{n\geq 1} \mu \{ \tau_n= i \} < \infty $ for all
$i\in I^c$, that is, the representation of $(X,T)$ can be assume clean.

\begin{theo}
Let $(X,T)$ be a Cantor minimal system of Toeplitz type of finite
rank $d$ and let $\mu$ be a $T$-ergodic probability measure. Then
all measurable eigenvalues of $(X,T,\mu)$ are rational.
\end{theo}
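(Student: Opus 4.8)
The plan is to reduce everything to the single real sequence $\theta_n=\{p_n\alpha\}$ (the fractional part) and to prove that its base-$(p_n)$ digit sequence is eventually periodic, which is equivalent to $\alpha\in\QQ$. The Toeplitz hypothesis is exactly what makes this one-dimensional reduction work: since every vertex of level $n$ receives $q_n$ edges, all row sums of $M(n)$ equal $q_n$, so $M(n)H(1)=q_nH(1)$ and $H(n)=P(n)H(1)=p_nH(1)$. Thus $h_k(n)=p_n$ for every $k$ and $\langle s,H(n)\rangle=p_n\bar s$ with $\bar s=\langle s,H(1)\rangle\in\{0,\dots,q_{n+1}-1\}$. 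Writing the expansion $\alpha=\sum_{i\ge1}a_i/p_i$ gives the recursion $q_{n+1}\theta_n=a_{n+1}+\theta_{n+1}$ with $a_{n+1}\in\{0,\dots,q_{n+1}-1\}$, and the goal becomes to show that $(a_n)$ is eventually periodic.

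First I would substitute $\langle s,H(n)\rangle=p_n\bar s$ into the necessary condition of Section~\ref{section5}. As that series converges, its general term tends to $0$, so for every pair $(l,k)\in J$ one gets $\frac{1}{M_{k,l}(n+1)}\sum_{s\in S_n(l,k)}|1-\lambda^{p_n\bar s-\rho_k(n+1)+\rho_l(n)}|^2\to 0$; since the exponent depends on $s$ only through $\bar s$, this says the phases $e^{2i\pi\theta_n\bar s}$ cluster near a single point of $S^1$ as $\bar s$ runs over the $M_{k,l}(n+1)$ values realized by the pair $(l,k)$ (these are exactly the co-positions of the edges from $l$ to $k$ among all $q_{n+1}$ edges into $k$). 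The decisive input is that this index set is a \emph{positive proportion} of $\{0,\dots,q_{n+1}-1\}$: from $\mu\{\tau_n=l,\tau_{n+1}=k\}=M_{k,l}(n+1)\,h_l(n)\,\mu_k(n+1)$ together with $h_l(n)=p_n$ and $\mu_k(n+1)=\mu\{\tau_{n+1}=k\}/p_{n+1}$, membership of $(l,k)$ in $J$ forces $M_{k,l}(n+1)\ge b\,q_{n+1}$ for a fixed $b>0$, and cleanness plus a pigeonhole over the finitely many vertices guarantees at least one such pair exists.

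Next I would convert clustering into an arithmetic statement about $\theta_n$. The set $\{\bar s\theta_n\bmod 1:0\le\bar s<q_{n+1}\}$ is an orbit of the rotation by $\theta_n$; were $\theta_n$ badly approximable by rationals of small denominator, this orbit would equidistribute (three-gap theorem) and no subset of proportion $\ge b$ could have all its phases in a shrinking arc. Hence clustering forces $\theta_n$ to lie within $o(1)$ of a rational of denominator at most $Q_0\sim 1/b$; equivalently $\mathrm{dist}(\theta_n,F_{Q_0})\to0$, where $F_{Q_0}\subseteq[0,1)$ is the finite set of fractions of denominator $\le Q_0$. Setting $v=\mathrm{lcm}(1,\dots,Q_0)$, so that $vg\in\ZZ$ for all $g\in F_{Q_0}$, this yields $\|v\theta_n\|\to0$.

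It remains to upgrade approximate rationality to exact rationality through the recursion $\theta_{n+1}=\{q_{n+1}\theta_n\}$, and this is the step I expect to be the main obstacle. Writing $\theta_n=g_n+\varepsilon_n$ with $g_n\in F_{Q_0}$ and $\varepsilon_n\to0$, the recursion sends $g_n$ to another element of $F_{Q_0}$ (a finite-state map depending only on $q_{n+1}\bmod\mathrm{den}(g_n)$) while multiplying the error by $q_{n+1}$. Since $(\varepsilon_n)$ must stay bounded and in fact tend to $0$ even though $q_{n+1}$ may be unbounded, I would argue that $q_{n+1}\varepsilon_n$ cannot carry a persistent nonzero rational part: otherwise $\varepsilon_n$ would behave like $c/q_{n+1}$ at a nonzero digit and the corresponding digits of $v\alpha$ would form a bounded but non-eventually-periodic sequence, contradicting the summability (not merely the vanishing) of the Section~\ref{section5} series. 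Making this dichotomy precise---showing $\varepsilon_n=0$ eventually, so that $\theta_n\in F_{Q_0}$ takes finitely many exact values and $\alpha(p_{n+T}-p_n)\in\ZZ$ for some period $T$---is the delicate point; it is here that finite rank (through the uniform density $b$) is indispensable, and it is also what leaves room for $\alpha$ to be rational without being of the continuous form $a/p_m$ of Theorem~\ref{cont-eigen-toeplitz}.
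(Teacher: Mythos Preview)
Your one-step reduction to the scalar $\theta_n=\{p_n\alpha\}$ is correct, and steps 1--3 (clustering of a proportion-$b$ subset of the phases $e^{2i\pi\theta_n\bar s}$ in a shrinking arc forcing $\theta_n\to F_{Q_0}$, hence $\|v\theta_n\|=\|p_n(v\alpha)\|\to 0$) could plausibly be made rigorous. The genuine gap is step 4, which you yourself flag as ``the delicate point'': the conclusion $\|p_n(v\alpha)\|\to0$ simply does \emph{not} force $v\alpha\in\QQ$ once the bases $q_n$ are unbounded. Concretely, take $q_n=2^n$ (so $p_n=2^{n(n+1)/2}$) and $\beta=\sum_{m\ge1}1/p_m$; then $\beta$ is irrational (its binary expansion has $1$'s only at the triangular positions), yet $\{p_n\beta\}=\sum_{m>n}(q_{n+1}\cdots q_m)^{-1}\sim 2^{-(n+1)}\to0$, and this sequence is even summable. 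Your gesture towards using the summability of the Section~\ref{section5} series does not close the gap: that series involves averages over the order-dependent index sets $S_n(l,k)$ and the uncontrolled phases $\rho_k(n)$ (which are defined through the unknown eigenfunction itself), and you have not extracted from it any constraint on the digits of $v\alpha$ sharper than $\|p_n(v\alpha)\|\to0$. Nothing in your recursion $\theta_{n+1}=\{q_{n+1}\theta_n\}$ rules out the small-but-nonzero-digit scenario $\varepsilon_n\sim c/q_{n+1}$.

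The paper bypasses this obstacle by abandoning the one-step condition of Section~\ref{section5} altogether and comparing two \emph{distant} levels $n<N$ directly through the martingale $f_n=\EE(f\mid\T_n)$. On an Egoroff set $A$ of measure $>1-\rho$ (with $\rho<1/(8d^2)$) one has $|f_n-f_N|\le\epsilon$, hence $(\lambda^{p_n})^{\bar s_{n,N}(x)}$ lies within $O(\epsilon)$ of one of the $d^2$ ratios $c'_i(N)/c'_j(n)$ for all $x\in A$. If $\alpha$ were irrational, unique ergodicity of the rotation by $\lambda^{p_n}$ makes $\{(\lambda^{p_n})^s:0\le s<Q_{n,N}\}$ equidistribute as $N\to\infty$; choosing an arc $L\subset S^1$ of length $1/(4d^2)$ at distance $>2\epsilon$ from all $d^2$ ratios, one computes $\mu\{x:(\lambda^{p_n})^{\bar s_{n,N}(x)}\in L\}=d_{n,N}(L)>|L|/2>\rho$, so $A$ must meet this set---a contradiction. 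The key difference is that by letting $N\to\infty$ the paper gets genuine equidistribution over $Q_{n,N}=q_{n+1}\cdots q_N$ steps, whereas your recursion sees only $q_{n+1}$ steps at a time and cannot rule out the small-digit irrationals above.
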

\begin{proof}
Contracting if necessary we can assume $(X,T)$ is represented by a clean
Bratteli-Vershik diagram of Toeplitz type. Let $(q_n;n\geq 1)$ be the
characteristic sequence of the diagram and
$\lambda=\exp(2i\pi\alpha)$ be a measurable eigenvalue of
$(X,T,\mu)$. From martingale theorem $f_n \to f$ as $n\to \infty$
$\mu$-a.e and since the Bratteli-Vershik diagram given $(X,T)$ is
clean then $\mu\{x \in X ; \lim_{n\to \infty}
|{c'}_{\tau_n(x)}(n)|=1\}=1$. Hence, by Egoroff theorem, for $\rho<
\frac{1}{8d^2}$ there is a measurable set $A$ such that $\mu(A)> 1-\rho$
and $(f_n)$ converges to $f$ and $|{c'}_{\tau_n(x)}(n)|$ converges to $1$
as $n\to \infty$ uniformly on $A$.

Let $\epsilon=\frac{1}{8d^4}$. Then for all $n<N$ large enough and
$x \in A$ one has $|f_n(x)-f_N(x)|\leq \epsilon$ and
$|{c'}_{\tau_n(x)}(n)|> 2/3$. 
By using the expression of $f_n$ we recall before, 
one has

$$
\left|
\frac{ {c'}_{\tau_N(x)}(N) }{{c'}_{\tau_n(x)}(n)} 
 -
(\lambda^{p_n})^{\bar s_{n,N}(x)} \right| \leq
\frac{\epsilon}{|{c'}_{\tau_n(x)}(n)|} 
$$ 

for every $x \in A$, where
$\bar s_{n,N}(x)=\sum_{i=n}^{N-1} q_{n+1}\cdots q_i {\bar
s}_i(x)$. Put $Q_{n,N}=q_{n+1}\cdots q_{N}$. Clearly $0 \leq \bar
s_{n,N} < Q_{n,N}$.

Assume $\alpha$ is irrational. For any interval $L \subseteq S^1$,
by the unique ergodicity of the rotation by $\lambda^{p_n}$, one
has
$$d_{n,N}(L)=\frac{1}{Q_{n,N}}|\{0\leq s \leq Q_{n,N}-1 : \lambda^{p_ns} \in L \}|
\to_{N\to \infty} |L|$$ uniformly in $L$ of the same length.

Let $L$ be an interval in $S^1$ such that $|L|=\frac{1}{4d^2}$ and
fix $N$ such that $d_{n,N}(L) > |L|/2$. 
In addition we can assume assume the
interval $L$ is disjoint from the set $\left\{\frac{{c'}_i(N)}{{c'}_j(n)} : 1
\leq i,j \leq d \right\}$ and that the distance between $L$ and
$\left\{\frac{c_i(N)}{c_j(n)} : 1 \leq i,j \leq d \right\}$ is bigger than
$2\epsilon$. Therefore,

\begin{align*}
\mu\{x \in X ; \lambda^{p_n \bar s_{n,N}(x)} \in L \} & =
\sum_{i=1}^d \mu_i(N) p_n Q_{n,N} d_{n,N}(L) \\
& = d_{n,N}(L) > \frac{|L|}{2} > \rho \ .
\end{align*}

This implies that $\mu\{ x\in A ; \lambda^{p_n \bar s_{n,N}(x)}
\in L \}>0$ and thus there is $x \in A$ such that $\lambda^{p_n
\bar s_{n,N}(x)} \in L$ while

$$ 
2\epsilon \leq \left | \frac{ {c'}_{\tau_N(x)}(N)}{{c'}_{\tau_n(x)}(n)} -
(\lambda^{p_n})^{\bar s_{n,N}(x)} \right | \leq
\frac{\epsilon}{|{c'}_{\tau_n(x)}(n)|}\ . $$

Thus, $|{c'}_{\tau_n(x)}(n)| \leq 1/2$, which is a contradiction. One
concludes $\alpha$ is rational.
\end{proof}

Let us now show that the measurable eigenvalues can not be any rational numbers.

\begin{prop}
Let $(X,T)$ be a Cantor minimal system of Toeplitz type of finite
rank $d$ and $\mu$ a $T$-ergodic probability measure. Let
$(q_n;n\geq 1)$ be the characteristic sequence of the associated
diagram. If $\exp(2i\pi (p/q))$, with $(p,q)=1$, is a non
continuous rational eigenvalue of $(X,T,\mu)$ then for all $n$
large enough, $\frac{q}{(q,p_n)}\leq d$.
\end{prop}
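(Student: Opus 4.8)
The plan is to reformulate the conclusion as a bound on the order of the root of unity $\lambda^{p_n}$. Since $\gcd(p,q)=1$, the number $\lambda^{p_n}=\exp(2i\pi p p_n/q)$ has order exactly $\theta_n:=q/(q,p_n)$ in $S^1$, so the assertion is precisely that $\theta_n\le d$ for all large $n$. Note that $\theta_n$ divides $q$ (hence $\theta_n\le q$), that $\theta_n\ge 2$ because non-continuity of $\lambda$ together with Theorem~\ref{cont-eigen-toeplitz} forces $q\nmid p_n$, and that $\theta_n$ is non-increasing in $n$ (as $p_n\mid p_{n+1}$ gives $(q,p_n)\mid(q,p_{n+1})$). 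I will in fact prove $\theta_n\le d$ for each fixed large $n$, letting a second index $N\to\infty$.

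I would reuse the apparatus of the previous theorem. Fix a clean Toeplitz representation, let $f$ be an eigenfunction with $|f|=1$, put $f_n=\EE_\mu(f|\T_n)$ and $c'_k(n)=c_k(n)\lambda^{\rho_k(n)}$, so that $f_n(x)=c'_{\tau_n(x)}(n)\lambda^{-j}$ with $j=\sum_{l=1}^{n-1}p_l\bar s_l(x)$. Applying Egoroff's theorem to the almost everywhere convergences $f_n\to f$ and $|c'_{\tau_n}(n)|\to1$ (the latter valid on $I$ by Lemma~\ref{lem:cnk} and cleanness), I fix a set $A$ with $\mu(A)>1-\rho$ on which both are uniform; for $n<N$ large and $x\in A$ this yields, exactly as before,
\begin{equation*}
\Bigl|\tfrac{c'_{\tau_N(x)}(N)}{c'_{\tau_n(x)}(n)}-(\lambda^{p_n})^{\bar s_{n,N}(x)}\Bigr|\le\frac{\epsilon}{|c'_{\tau_n(x)}(n)|}<\tfrac{3}{2}\epsilon ,
\end{equation*}
where $\bar s_{n,N}(x)=\sum_{l=n}^{N-1}q_{n+1}\cdots q_l\,\bar s_l(x)$ and $p_n\bar s_{n,N}=j_N-j_n$.

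The heart of the argument is an exact equidistribution statement replacing the unique ergodicity of the rotation used in the irrational case. Since every floor $T^{-j}B_i(N)$, $0\le j<p_N$, of the level-$N$ tower of a vertex $i$ has measure $\mu_i(N)$, conditionally on $\{\tau_N=i\}$ the height $j$ is uniform on $\{0,\dots,p_N-1\}$; because $j=p_n\bar s_{n,N}+j'$ with $0\le j'<p_n$, the digit $\bar s_{n,N}$ is then uniform on $\{0,\dots,Q_{n,N}-1\}$ with $Q_{n,N}=q_{n+1}\cdots q_N=p_N/p_n$. As $\lambda^{p_n}$ has order $\theta_n$ and $Q_{n,N}\to\infty$ with $N$ (tower heights diverge by minimality), the conditional law of $(\lambda^{p_n})^{\bar s_{n,N}}$ given $\{\tau_N=i\}$ converges to the uniform law on the $\theta_n$-element cyclic group $\Gamma_n=\langle\lambda^{p_n}\rangle$; in particular, for $N$ large, each element of $\Gamma_n$ receives conditional mass at least $1/(2\theta_n)$.

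Finally I would extract the contradiction. I choose $\epsilon$ so small that $3\epsilon<2\sin(\pi/q)\le 2\sin(\pi/\theta_n)$, the minimal chordal gap between distinct elements of $\Gamma_n$ (legitimate since $2\le\theta_n\le q$); then for $x\in A$ the displayed inequality forces $(\lambda^{p_n})^{\bar s_{n,N}(x)}$ to be the unique point of $\Gamma_n$ within $\tfrac32\epsilon$ of $c'_{\tau_N(x)}(N)/c'_{\tau_n(x)}(n)$. Conditioning on $\{\tau_N=i\}$ fixes the numerator, so this target, hence the attained point of $\Gamma_n$, depends only on $\tau_n(x)\in\{1,\dots,d\}$: on $A\cap\{\tau_N=i\}$ at most $d$ elements of $\Gamma_n$ are realized. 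If $\theta_n>d$, the remaining $\ge\theta_n-d$ elements of $\Gamma_n$ are realized only off $A$, so by the equidistribution $\mu(A^c\mid\tau_N=i)\ge(\theta_n-d)/(2\theta_n)\ge 1/(2q)$. Choosing $i\in I$ gives $\mu\{\tau_N=i\}\ge b_0>0$ for all large $N$, whence $\rho\ge\mu(A^c)\ge b_0/(2q)$; fixing $\rho<b_0/(2q)$ at the outset contradicts this, so $\theta_n\le d$, which is the asserted bound. The main obstacle is the conditional equidistribution of $\bar s_{n,N}$ together with the uniform (in $n$) choice of $\epsilon$ and $\rho$, which is only possible because $\theta_n\le q$ is bounded and $i\in I$ provides a uniform floor for $\mu\{\tau_N=i\}$.
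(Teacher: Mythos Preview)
Your proof is correct and follows the same overall architecture as the paper's argument---Egoroff on an eigenfunction approximation, equidistribution of a suffix quantity modulo the order of $\lambda^{p_n}$, and a pigeonhole on tower labels---but the implementation differs in several respects. The paper first reduces to the case $(q,p_n)=1$ (so that the order is exactly $q$) and then works with \emph{consecutive} levels $n,n+1$: it contracts the diagram so that $q/q_{n+1}$ is small, which makes $\bar s_n\bmod q$ nearly uniform conditionally on $\tau_{n+1}$, and then invokes the $\rho_n$-functions from \cite{BDM} directly; the contradiction is obtained by exhibiting, for each large $n$, two points $x,y$ with the same $(\tau_n,\tau_{n+1})$ but $\bar s_n(x)\not\equiv\bar s_n(y)\pmod q$. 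You instead keep $\theta_n=q/(q,p_n)$ general, fix $n$ and send a second index $N\to\infty$, use the exact uniformity of $\bar s_{n,N}$ on $\{0,\dots,Q_{n,N}-1\}$ conditionally on $\tau_N$, and derive a measure inequality $\mu(A^c\mid\tau_N=i)\ge 1/(2q)$ that clashes with the choice of $\rho$. Your route avoids both the preliminary reduction to $(q,p_n)=1$ and the contraction step, and it recycles the martingale/Egoroff machinery of the preceding theorem rather than quoting \cite{BDM}; the price is the extra index $N$ and the need to pick $i\in I$ to get a uniform lower bound on $\mu\{\tau_N=i\}$. One small remark: the divergence $Q_{n,N}\to\infty$ you use is guaranteed because $X$ is a Cantor set (tower heights diverge), not merely by minimality.
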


\begin{proof}
  Let $\lambda = \exp (2i\pi p/q)$, $p,q \in \NN$, $(p,q)=1$, be a non continuous eigenvalue of
$(X,T,\mu)$. 
Thus $\exp (2i\pi /q)$ is a non continuous eigenvalue of $(X,T,\mu)$ and we can suppose $p=1$. 
From Theorem \ref{cont-eigen-toeplitz} we deduce $\exp (2i\pi (q,p_n)/q)$ is a non  continuous eigenvalue for all $n$ large enough.
Hence we can assume $(q,p_n)=1$ for all $n$ large enough.

Fix $n \geq 1$. Let $\epsilon >0$ be such that $\epsilon  < 1/2qd$. Contracting the diagram if
needed, one can suppose $q/q_n < \epsilon$ for all $n \geq 1$. Recall $r_{n+1}
(x) -r_n (x) = \bar{s}_n (x) p_n$ with $0\leq \bar{s_n} \leq
q_{n+1} -1$. For all $0\leq a \leq q-1$ set $S_n (a) = \{ x\in
X ; \bar{s}_n = a \mod q\}$. 
Let $q_{n+1} = kq +r$ with $0\leq r\leq
q-1$. For all $t\in \{ 1, \ldots ,d\}$ one has that

\begin{align}
\label{minor-cong}
\frac{1}{q}-\epsilon
\leq
\frac{1}{q}-\frac{1}{q_{n+1}}
\leq
\frac{1}{q}-\frac{r}{qq_{n+1}}
\leq
\frac{k}{q_{n+1}}
\leq
\mu \{ S_n (a) |  \tau_{n+1} =t \} 
.
\end{align}

From Theorem 7 in \cite{BDM}, there exist real functions $\rho_n:\{1,\ldots,d\} \to \RR$ such that 
$((1/q) r_n - \rho_n; n\in \NN)$ converges $\mu$-almost everywhere modulo $\ZZ$. Thus,  from Egoroff theorem there exists $A \in \B(X)$ with $\mu(A)\geq 1
-\epsilon$ such that $((1/q)r_n - \rho_n; n\in \NN)$ converges
uniformly on $A$ in $\RR/\ZZ $. Thus $((1/q)\bar{s}_n (x) -
(\rho_{n+1} - \rho_n );n\in \NN)$ converges uniformly to 0 on $A$
in $\RR/\ZZ $.

There exists $t\in \{ 1, \ldots ,d\}$ such that $\mu \{\tau_{n+1}
= t\}\geq 1/d $. Hence, using \eqref{minor-cong}, one obtains $\mu
( S_n (a)  \cap \{ \tau_{n+1} =t\} \cap A) \geq
\left((\frac{1}{q}-\epsilon )/d \right)-\epsilon  \geq \frac{1}{qd}-2\epsilon >
0$ for any $0 \leq a \leq q-1$.

Suppose $q > d$. Then there exist $x\in S_n (a)  \cap \{
\tau_{n+1} =t\} \cap A$ and $y\in S_n (b)  \cap \{ \tau_{n+1} =t\}
\cap A$, with $a\not = b$, and $\tau_n (x) = \tau_n (y)$. Then,
$\frac{p_n}{q} (a-b)$ should go to $0$ in $\RR /\ZZ$ because

$$
\frac{p_n}{q} (a-b)
=
\frac{p_n}{q} (\bar{s_n} (x) - \bar{s_n} (y)) -(\rho_{n+1} (x) - \rho_{n} (x)
)
+ (\rho_{n+1} (y) - \rho_{n} (y) ) \to 0 
$$
in $\RR /\ZZ$. But this is not possible because $1\leq |a-b| \leq q-1$ and $(q,p_n)=1$. Hence
$q\leq d$.
\end{proof}

The last theorem implies that an arbitrary subgroup of $S^1$
cannot be the set of eigenvalues of a Toeplitz minimal system of
finite rank.  Also, it is not difficult to deduce from last theorem  that there is a unique $q \leq d$ with $(q,p_n)=1$ for all 
enough large $n$ such that  all non continuous eigenvalues of the same type are in the subgroup generated by $1/q$.  
Finally observe that from \cite{DM} it follows that Toeplitz type Cantor minimal systems of finite rank that have  non continuous eigenvalues are expansive (thus subshifts).  

In the following example we provide a Toeplitz system
of finite rank 3 where $\lambda=-1$ is a non continuous
eigenvalue.

\medskip

{\bf Example.} Let $(l_n;n\geq 1)$ be a strictly increasing
sequence of integers with $l_1=0$. Put $q_n=3^{l_n}$ for
$n\geq 1$. 
Consider the Toeplitz system $(X,T,\mu )$ of finite rank $3$ given
by the Bratteli-Vershik diagram with characteristic sequence
$(q_n;n\geq 1)$ and such that each tower of level $n$ is built by
concatenating towers of previous level in the following way:

$$
1 \rightarrow (12)^{t_n-3}131, \ 2 \rightarrow 1(12)^{t_n-3}31,
\ 3 \rightarrow (12)^{t_n-3}131,
$$ 

where $q_n=2t_n-3$.
We set $Q_n = q_1 q_2 \cdots q_n$ for all $n$.
Let $n\geq 1$. Define $\rho_1(n)=-\rho_2(n)=-\rho_3(n)=1$ and
$f_n (x) =(-1)^j \rho_{k}(n)$ if $x \in T^{-j}B_k(n)$ for $k\in
\{1,2,3\}$ and $0\leq j < h_k(n)$.
We set $A_n = \{ x\in X ; f_n (x) \not = f_{n+1} (x) \} = \cup_{1\leq i,j\leq 3} A_n (i,j)$ where

$$
A_n (i,j)=\{x \in X; \tau_n (x) = i, \tau_{n+1} (x) = j, f_n (x) \neq f_{n+1} (x) \} .
$$

Let $x\in A_n (1,1)$. We have $f_n (x) = (-1)^j \rho_1 (n)$ and $f_{n+1} (x) = (-1)^{j+2l3^n } \rho_1 (n+1) =f_n (x)$ for some $l\in \NN$.
Studying the other cases one can check that 

$$
A_n = \{ \tau_{n+1} = 3 \} \cup \{ \tau_{n+1} = 2 , \tau_{n} = 3 \} \cup\left( 
\bigcup_{{0\leq k\leq Q_n-1}\atop{Q_{n+1} -Q_n\leq k\leq Q_{n+1} -1} } T^{-k} B_2 (n+1)  
\right)
$$

and 

$$
\mu (A_n) = \frac{1}{q_{n+2}} + \frac{1}{q_{n+1}} \mu ( \tau_{n+1} = 2) + \frac{2}{q_{n+1}} \mu ( \tau_{n+1} = 2) \leq \frac{4}{q_{n+1}} .
$$

As $\sum \frac{1}{q_{n+1}}$ converges one deduces that $\mu(\limsup_{n\to\infty} A_n)=0$.
Then, $(f_n)$ converges $\mu$-almost everywhere and 
one can check that $f\circ T=-f$ $\mu$-almost everywhere.
Hence, $-1$ is a measurable eigenvalue of
the system. 
Theorem \ref{cont-eigen-toeplitz} implies $-1$ is not a continuous eigenvalue.

\bigskip

{\bf Acknowledgments.} The third author is supported by Nucleus
Millennium Information and Randomness P04-069-F. This project was
also partially supported by the international cooperation program
ECOS-Conicyt C03-E03.

\end{document}